\newtheorem{theorem}{Theorem}[section]
\newtheorem{corollary}[theorem]{Corollary}
\newtheorem{lemma}[theorem]{Lemma}
\newtheorem{proposition}[theorem]{Proposition}
\def\J#1#2#3{ \left\{ #1,#2,#3 \right\} }
\def\11{\textbf{$1$}}
\def\CC{{\mathbb{C}}}
\renewcommand{\Re}{\operatorname{Re}}
\begin{document}

\title[Extension of isometries from the unit sphere]{Extension of isometries from the unit sphere of a rank-2 Cartan factor}

\author[O.F.K. Kalenda]{Ond\v{r}ej F.K. Kalenda}

\author[A.M. Peralta]{Antonio M. Peralta}

\address{Charles University, Faculty of Mathematics and Physics, Department of
Mathematical Analysis, Sokolovsk{\'a} 86, 186 75 Praha 8, Czech Republic}
\email{kalenda@karlin.mff.cuni.cz}
\address{Departamento de An{\'a}lisis Matem{\'a}tico, Facultad de
Ciencias, Universidad de Gra\-na\-da, 18071 Granada, Spain.}
\email{aperalta@ugr.es}


\subjclass[2010]{17C65, 46A22, 46B20, 46B04}

\keywords{Tingley's problem; Mazur--Ulam property; extension of isometries; rank-2 Cartan factors; spin factor}

\date{}

\begin{abstract}
We prove that every surjective isometry from the unit sphere of a rank-2 Cartan factor $C$ onto the unit sphere of a real Banach space $Y$, admits an extension to a surjective real linear isometry from $C$ onto $Y$. The conclusion also covers the case in which $C$ is a spin factor. This result closes an open problem and, combined with the conclusion in a previous paper, allows us to establish that every JBW$^*$-triple $M$ satisfies the Mazur--Ulam property, that is, every surjective isometry from its unit sphere onto the unit sphere of a arbitrary real Banach space $Y$ admits an extension to a surjective real linear isometry from $M$ onto $Y$.
\end{abstract}

\maketitle
\thispagestyle{empty}

\section{Introduction}

\emph{Tingley's problem}, i.e. the question whether any surjective isometry between the unit spheres of two normed spaces admits a real-linear extension, has defined an active and fruitful line of research in recent years. This problem was named after D. Tingley who was the first author  studying this question in the setting of finite dimensional Banach spaces (see \cite{Ting1987} where he proved that such an isometry preserves antipodality). The reader should note that Tingley's problem remains open even for two-dimensional Banach spaces. The simplicity of the problem makes the question as attractive as difficult, and a fruitful mathematical machinery has been developed to find positive solutions to Tingley's problem in concrete classes of Banach spaces (see, for example, the surveys \cite{YangZhao2014,Pe2018} and the recent references \cite{CabSan19,FerJorPer2018,FerPe17c,FerPe17d,FerPe18Adv,Mori2017,PeTan16}).\smallskip

A Banach space $X$ satisfies the \emph{Mazur--Ulam property} if every surjective isometry from its unit sphere, $S(X)$, onto the unit sphere of any real Banach space admits an extension to a surjective real linear isometry between the corresponding spaces. This property was first termed by L. Cheng and Y. Dong in \cite{ChenDong2011}, probably due to the natural connections between Tingley's problem and the Mazur--Ulam theorem.  The study of the Mazur--Ulam property in different classes of Banach spaces is now a day a challenging subject of study for researchers (cf. \cite{CabSan19,CuePer18,CuePer19,JVMorPeRa2017,MoriOza2018,Pe2019,WH19}).\smallskip

For the sake of brevity, we shall focus on two recent contributions on the Mazur--Ulam property. In the first one, M. Mori and N. Ozawa prove that unital C$^*$-algebras and real von Neumann algebras are among the spaces satisfying the Mazur--Ulam property
(cf.\ \cite{MoriOza2018}). Additional examples of Banach spaces satisfying the Mazur--Ulam property have been found in \cite{BeCuFerPe2018}, where it is proved that if $M$ is a JBW$^*$-triple but not a Cartan factor of rank two, then $M$ satisfies the Mazur--Ulam property. The problem whether every rank-$2$ Cartan factor satisfies the Mazur--Ulam property remained as an intriguing open question. Among the examples of rank-2 Cartan factors which are not covered by the main result in \cite{BeCuFerPe2018} we find the spin factors which constitute an important model in physics (cf. \cite{BoHam2010,FriRu2001,JorvNeuWign34}). This note is aimed to present a complete solution to this problem. Our main result is the following theorem.

\begin{theorem}\label{t rank-2 Cartan factor of rank 2} Let $C$ be a rank-2 Cartan factor. Then $C$ satisfies the Mazur--Ulam property, that is, for each real Banach space $Y$, every surjective isometry $\Delta: S(C)\to S(Y)$ admits an extension to a surjective real linear isometry from $C$ onto $Y$.\end{theorem}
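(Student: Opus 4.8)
The plan is to promote $\Delta$ to a map defined on all of $C$ by positive homogeneity, setting $\widetilde\Delta(0)=0$ and $\widetilde\Delta(x)=\|x\|\,\Delta(x/\|x\|)$ for $x\neq 0$, and then to prove that $\widetilde\Delta$ is real linear; surjectivity and the isometry property are automatic once linearity is established. I would first reduce the list of spaces to be treated. Up to isometric triple isomorphism a rank-$2$ Cartan factor is a spin factor, a type-$1$ factor $B(\CC^2,H)$, a type-$2$ or type-$3$ factor of (anti)symmetric operators, or the exceptional type-$5$ factor $M_{1,2}(\mathbb{O})$, and the classical low-dimensional isomorphisms identify several of these isometrically with spin factors. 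Since the main theorem of \cite{BeCuFerPe2018} already settles every JBW$^*$-triple that is not a rank-$2$ Cartan factor, the genuinely new input must come from a direct analysis of these factors, with the spin factors and the large type-$1$ factors $B(\CC^2,H)$, $\dim H=\infty$, as the crucial models.

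The central tool I would use is the transfer of facial structure. A surjective isometry between the unit spheres of two Banach spaces carries the maximal norm-closed proper faces of one closed unit ball bijectively onto those of the other, because such faces admit an intrinsic metric description as the maximal convex subsets of the sphere containing no pair of antipodal points. I would then describe these faces inside $C$ explicitly: for a rank-$2$ Cartan factor they are the sets $F_u=u+B_{C_0(u)}$ attached to the minimal tripotents $u$, where $C_0(u)$ denotes the Peirce-$0$ subspace. For a spin factor $C$, equipped with its Hilbert inner product $(\cdot\mid\cdot)$ and conjugation $x\mapsto\bar x$ and with norm determined by $\|x\|^2=(x\mid x)+\sqrt{(x\mid x)^2-|(x\mid\bar x)|^2}$, each such $F_u$ is a flat two-real-dimensional disk $u+\overline{\mathbb{D}}\,\bar u$ lying in $S(C)$. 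Thus $\Delta$ transports the family of minimal tripotents, together with their Peirce-$0$ fibres, into $Y$.

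With this dictionary in hand I would reconstruct $\widetilde\Delta$ one graded piece at a time. Using the rank-$2$ spectral decomposition $x=u_1+\lambda u_2$ with $u_1,u_2$ orthogonal minimal tripotents and $0\le\lambda\le 1$, together with the pair of antipodal faces $F_u$ and $F_{-u}$, I would pin down the behaviour of $\Delta$ on the real-linear span of each compatible family of tripotents, applying the Mazur--Ulam theorem (or a direct computation) on these low-dimensional sections to obtain affinity there. The last step is to glue these local real-linear pieces into a single real-linear isometry $T\colon C\to Y$ with $T|_{S(C)}=\Delta$, checking consistency on overlaps by means of the transitivity of the tripotent structure.

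The hard part will be exactly the rank-$2$ degeneracy that blocked the general theorem of \cite{BeCuFerPe2018}. In rank at least three the maximal faces are large enough that their inclusion pattern rigidly recovers each minimal tripotent, and one may copy the higher-rank argument; in rank $2$ the maximal faces are small---mere disks in the spin case---so the facial lattice alone no longer determines the minimal tripotents. Overcoming this requires squeezing out the extra metric information that the lattice discards, most delicately the recovery of the conjugation and of the inner product from the explicit spin norm and from the mutual position of the antipodal disk-faces; this is where I expect the real work of the proof to lie.
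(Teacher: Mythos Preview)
Your proposal is a plan rather than a proof, and you say so yourself in the last paragraph: the step where you ``recover the conjugation and the inner product from the explicit spin norm and from the mutual position of the antipodal disk-faces'' is exactly the whole theorem, and you have not indicated how to carry it out. The earlier parts of your outline (transport of maximal faces, spectral decomposition $x=u_1+\lambda u_2$, affinity of $\Delta$ on each face) are all correct and are established in the paper as well, but they do not add up to a proof: knowing that $\Delta$ is affine on every face $F_u$ and that $\Delta(u_1+u_2)=\Delta(u_1)+\Delta(u_2)$ still leaves you to show that the various local affine pieces are restrictions of a \emph{single} real-linear map, and your ``gluing by transitivity of the tripotent structure'' is precisely the point where the rank-$2$ degeneracy bites. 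No amount of face-lattice information alone will do this, because in rank~$2$ the faces are too small to force the required overlaps.

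The paper sidesteps the gluing problem entirely by working on the dual side. Instead of proving directly that the homogeneous extension is linear, it invokes the Fang--Wang/Mori--Ozawa criterion (Lemma~\ref{l 2.1 FangWang}): it suffices to produce, for each pure atom $\varphi_v\in\partial_e\mathcal{B}_{C^*}$, a functional $\psi\in S(Y^*)$ with $\psi\circ\Delta=\Re\varphi_v$ on $S(C)$. The candidate $\psi$ comes from the support set $\hbox{supp}_\Delta(F_v^C)$, and the identity is then checked only on minimal tripotents (Corollary~\ref{c additivity on mutually orthogonal minimal tripotents}$(d)$). The genuinely new technical input, replacing your unspecified ``recovery'' step, is twofold: first, Proposition~\ref{p linearity on the hermitian part} shows that $\Delta$ is real-linear on the self-adjoint part $C_2(e)_{sa}=\mathbb{R}e\oplus^1\mathcal{H}_e$ of the Peirce-$2$ subspace of any rank-$2$ tripotent $e$ (this adapts Mori--Ozawa's $M_2(\mathbb{C})$ argument to the spin factor $C_2(e)$ and crucially uses that elements at distance~$2$ from both $e$ and $-e$ must lie in $\mathcal{H}_e$); second, Lemmata~\ref{l extension by inner automorphism on M2}--\ref{L:isomorphisms} provide inner automorphisms of $C$ that move an arbitrary minimal tripotent into a position where the previous step applies, while fixing $v$. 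This automorphism-plus-hermitian-part strategy is what replaces the missing ``glue'' in your outline.
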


This result can be now combined with the main conclusion in \cite{BeCuFerPe2018} to deduce the following corollary.

\begin{corollary}\label{c rank-2 Cartan factor of rank 2} Every JBW$^*$-triple $M$ satisfies the Mazur--Ulam property, that is, every surjective isometry from the unit sphere of $M$ onto the unit sphere of an arbitrary real Banach space $Y$ can be extended to a surjective real linear isometry from $M$ onto $Y$.
\end{corollary}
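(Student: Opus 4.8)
The plan is to deduce the corollary directly from a dichotomy, partitioning the class of all JBW$^*$-triples into two complementary families and invoking one already-established result on each. First I would recall that every Cartan factor is itself a JBW$^*$-triple, so that within the class of JBW$^*$-triples the rank-$2$ Cartan factors constitute a genuine subclass; consequently, any fixed JBW$^*$-triple $M$ either is a rank-$2$ Cartan factor or it is not, and these two alternatives are mutually exclusive and jointly exhaustive.

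The argument then splits along this dichotomy. In the first case, where the JBW$^*$-triple $M$ is a rank-$2$ Cartan factor, Theorem \ref{t rank-2 Cartan factor of rank 2} applies verbatim and yields that $M$ satisfies the Mazur--Ulam property, i.e.\ every surjective isometry from $S(M)$ onto the unit sphere of an arbitrary real Banach space $Y$ extends to a surjective real linear isometry from $M$ onto $Y$. In the second case, where $M$ is a JBW$^*$-triple that is \emph{not} a rank-$2$ Cartan factor, the main conclusion of \cite{BeCuFerPe2018} applies and again delivers the Mazur--Ulam property for $M$. Since the two cases together cover every JBW$^*$-triple, the conclusion holds in full generality and the corollary follows.

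The only point meriting attention — and it is a matter of bookkeeping rather than a genuine obstacle — is to confirm that the hypothesis of the cited theorem from \cite{BeCuFerPe2018} (``a JBW$^*$-triple but not a Cartan factor of rank two'') is precisely the complement, inside the class of JBW$^*$-triples, of the hypothesis of Theorem \ref{t rank-2 Cartan factor of rank 2}. Once this complementarity is observed, no further work is required: all the substantive content resides in Theorem \ref{t rank-2 Cartan factor of rank 2} and in the previous paper, and the corollary is simply their formal combination.
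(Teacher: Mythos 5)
Your proposal is correct and is exactly how the paper obtains the corollary: the text states that Theorem~\ref{t rank-2 Cartan factor of rank 2} is "combined with the main conclusion in \cite{BeCuFerPe2018}," which covers precisely the JBW$^*$-triples that are not rank-$2$ Cartan factors. The dichotomy you describe is the intended (and only) content of the deduction.
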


Along this note, the closed unit ball of a Banach space $X$ will be denoted by $\mathcal{B}_X.$ The symbol $\mathbb{T}$ will stand for the unit sphere of $\mathbb{C}$. The basic notions and results on JB$^*$-triples and Cartan factors are surveyed in section \ref{sec:2}, where we also obtain some new results on the properties of the surjective isometries from the unit sphere of a JB$^*$-triple onto the unit sphere of a real Banach space.\smallskip

Let us briefly comment the strategy of the proof. The basic aim consists in verifying the assumptions of the following technical lemma.

\begin{lemma}\label{l 2.1 FangWang}{\rm\cite[Lemma 6]{MoriOza2018}, \cite[Lemma 2.1]{FangWang06})} Let  $\Delta : S(X) \to S(Y)$ be a surjective isometry between the unit spheres of two real normed spaces. Assume we can find two families of functionals $\{\varphi_i\}_i \subset \mathcal{B}_{X^*}$ and $\{\psi_i\}_i\subset \mathcal{B}_{Y^*}$ such that $\varphi_i = \psi_i \Delta$ for every $i$, and that the family $\{\varphi_i\}_i$ is norming for $X$. Then, $\Delta$ extends to a surjective real linear isometry.
\end{lemma}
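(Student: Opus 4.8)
The plan is to build the extension as the positively homogeneous extension of $\Delta$ and then to verify, by a symmetry trick, that it is a \emph{global} isometry, after which the Mazur--Ulam theorem finishes the job. Concretely, define $F:X\to Y$ by $F(0)=0$ and $F(x)=\|x\|\,\Delta\!\left(x/\|x\|\right)$ for $x\neq 0$. Then $F$ is positively homogeneous, $\|F(x)\|=\|x\|$ for all $x$, and $F$ agrees with $\Delta$ on $S(X)$, so it suffices to prove that $F$ is a surjective real linear isometry. Surjectivity is immediate from the surjectivity of $\Delta$ together with positive homogeneity, and a one-line computation using $\varphi_i=\psi_i\Delta$ and positive homogeneity shows that $\psi_i\circ F=\varphi_i$ on the whole of $X$ for every $i$.

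First I would use the norming hypothesis to show that $F$ is expansive. Since each $\|\psi_i\|\le 1$ and the $\varphi_i$ are linear, for all $x,y\in X$ we have $\|F(x)-F(y)\|\ge \sup_i|\psi_i(F(x)-F(y))|=\sup_i|\varphi_i(x-y)|=\|x-y\|$, where the final equality is exactly the statement that $\{\varphi_i\}$ is norming for $X$.

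The key observation -- and the step I expect to be the crux -- is that the norming property transfers to $\{\psi_i\}$ on $Y$. Indeed, given $w\in Y$, surjectivity of $F$ provides $x$ with $w=F(x)$, and then $\|w\|=\|x\|=\sup_i|\varphi_i(x)|=\sup_i|\psi_i(F(x))|=\sup_i|\psi_i(w)|$; since the reverse inequality $\sup_i|\psi_i(w)|\le\|w\|$ is automatic, $\{\psi_i\}$ is norming for $Y$. This upgrade is genuinely needed: without it the hypotheses only yield the one-sided estimate above, and it is precisely surjectivity that lets me run the argument symmetrically for the inverse map.

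Finally I would apply the symmetric argument to $\Delta^{-1}:S(Y)\to S(X)$, whose positively homogeneous extension is precisely $F^{-1}$ (note that $F$ is injective by expansiveness and thus bijective), and which satisfies $\varphi_i\circ F^{-1}=\psi_i$. Since $\{\psi_i\}$ is now known to be norming for $Y$, the very same estimate yields $\|F^{-1}(w)-F^{-1}(w')\|\ge\|w-w'\|$; taking $w=F(x)$ and $w'=F(y)$ gives $\|x-y\|\ge\|F(x)-F(y)\|$, so $F$ is also contractive. Hence $F$ is a surjective isometry of $X$ onto $Y$ with $F(0)=0$, and the Mazur--Ulam theorem guarantees that $F$ is affine, thus real linear. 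As $F$ extends $\Delta$, the lemma follows; every step other than the transfer of the norming property is a routine combination of the expansiveness estimate with the Mazur--Ulam theorem.
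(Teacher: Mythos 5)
Your argument is correct and is essentially the standard proof of this lemma, which the paper itself only cites (from Mori--Ozawa and Fang--Wang) without reproducing: positively homogeneous extension, the expansiveness estimate via the norming family, transfer of the norming property to $\{\psi_i\}$ through surjectivity, the symmetric estimate for $\Delta^{-1}$, and finally Mazur--Ulam. All steps check out, including the identification of the homogeneous extension of $\Delta^{-1}$ with $F^{-1}$.
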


Hence, Theorem~\ref{t rank-2 Cartan factor of rank 2} follows from the just mentioned lemma and Proposition~\ref{p behavior on pure atoms rank2 Cartan factor}, since the pure atoms addressed in this proposition are just extreme points of the unit ball of $C^*$, which form a norming set. To prove the final proposition we use the results from Section~\ref{sec:spin factors} on the structure of spin factors and on the behaviour of isometries on self-adjoint parts of Peirce-$2$ subspaces (see Proposition~\ref{p linearity on the hermitian part}) and some results on automorphisms of Cartan factors given in Section~\ref{sec: inner automorphisms}.

Let us further remark that some important steps of our proof are specific for the case of rank-$2$ Cartan factors, in particular one of the key steps consists in using Lemma~\ref{l distance 2 from a tripotent Cartan factor rank geq 2} precisely for rank-$2$ Cartan factors. So, the current paper is a real complement to the results of \cite{BeCuFerPe2018}, where the Mazur-Ulam property is proved for all JBW$^*$-triples except for Cartan factors of rank $2$. Let us also point out that many of the results of Section~\ref{sec:2} are proved in a more general setting, some of them even for general JB$^*$-triples. But investigation of Mazur-Ulam property for general non-dual JB$^*$-triples will probably need some new methods.

\section{JB$^*$-triples and rank}\label{sec:2}

A \emph{JB$^*$-triple}, as introduced in \cite{Ka83}, is a complex Banach space $E$ admitting a continuous triple product $\J \cdot\cdot\cdot :
E\times E\times E \to E,$ which is symmetric and bilinear in the first and third variables, conjugate linear in the second variable,
and satisfies the following axioms:
\begin{enumerate}[{\rm (a)}] \item (Jordan identity)
$$L(a,b) L(x,y) = L(x,y) L(a,b) + L(L(a,b)x,y)
 - L(x,L(b,a)y)$$ for $a,b,x,y$ in $E$, where $L(a,b)$ is the operator on $E$ given by $x \mapsto \J abx;$
\item $L(a,a)$ is a hermitian operator with non-negative spectrum for all $a\in E$;
\item $\|\{a,a,a\}\| = \|a\|^3$ for each $a\in E$.\end{enumerate}

The class of JB$^*$-triples contains, but is not limited to, all C$^*$-algebras and the spaces $B(H,K)$, of all bounded linear operators between complex Hilbert spaces $H$ and $K$, with triple product \begin{equation}\label{eq C*-triple product} \J xyz = \frac12 (x y^* z + z y^* x).
 \end{equation} It follows than any complex Hilbert space is a JB$^*$ triple (when identified with $B(\CC,H)$).

An important subclass of JB$^*$-triples is formed by JB$^*$-algebras. Recall that a  real (respectively, complex) \emph{Jordan algebra} is a
(not-necessarily associative) algebra over the real (respectively, complex) field whose product is abelian and satisfies the Jordan identity: $$(a \circ
b)\circ a^2 = a\circ (b \circ a^2).$$ A \emph{Jordan Banach algebra} is a normed Jordan algebra $A$ whose norm is complete and satisfies $\|
a\circ b\| \leq \|a\| \ \|b\|$, $a,b\in A$. A \emph{JB$^*$-algebra} is a complex Jordan Banach algebra $A$ equipped with an algebra involution $^*$ satisfying  $$\|\J a{a^*}a \|= \|a\|^3, \hbox{ for all $a\in A$},$$ where $\J a{a^*}a  = 2 (a\circ a^*) \circ a - a^2 \circ a^*$. If $A$ is a C$^*$-algebra, it becomes a JB$^*$-algebra when equipped with the Jordan product $a\circ b=\frac12(ab+ba)$. Moreover,
by \cite[Theorem 3.3]{BraKaUp78}, every JB$^*$-algebra $A$ becomes a JB$^*$-triple when equipped with the triple product
$$\J abc = (a \circ b^*) \circ c + (c\circ b^*) \circ a - (a\circ c) \circ b^*.$$ 

By analogy with von Neumann algebras, a \emph{JBW$^*$-triple} is a JB$^*$-triple which is also a dual Banach space (and, similarly, a \emph{JBW$^*$-algebra} is a JB$^*$-algebra which is also a dual Banach space). The bidual of a JB$^*$-triple is a JBW$^*$-triple with respect to a triple product extending the one of $E$ \cite{Di86}. J.T. Barton and R.M. Timoney proved in \cite{BarTi86} that every JBW$^*$-triple admits a unique isometric predual and its triple product is separately weak$^*$ continuous.\smallskip

Additional examples of JB$^*$- and JBW$^*$-triples are given by the so-called \emph{Cartan factors}. Suppose $H_1$ and $H_2$ are two complex Hilbert spaces, the triple product given in \eqref{eq C*-triple product} defines an structure of JB$^*$-triple on the space $L(H_1,H_2)$, of all bounded linear operators between $H_1$ and $H_2$. Those JB$^*$-triples of the form $L(H_1,H_2)$ are called \emph{Cartan factors of type 1}. Clearly the space $K(H_1,H_2)$, of all compact operators from $H_1$ into $H_2$ is a JB$^*$-subtriple of $L(H_1,H_2)$. In order to describe the next two Cartan factors, let $j$ be a conjugation (i.e. a conjugate linear isometry or period 2) on a complex Hilbert space $H$. The assignment $x\mapsto x^t:=jx^*j$ defines a linear involution on $L(H)$ (which can be represented as the transpose with respect to a suitable orthonormal basis). A \emph{Cartan factor of type 2} (respectively, of \emph{type 3}) is a complex Banach space which coincides with the JB$^*$-subtriple of $L(H)$ of all $t$-skew-symmetric (respectively, $t$-symmetric) operators.\smallskip

The \emph{Cartan factors of type 4}, also called \emph{spin factors}, are defined as complex Hilbert spaces $X$ provided with a conjugation $ x\mapsto \overline{x},$ with the triple product
and the norm  given by \begin{equation}\label{eq spin product}
\{x, y, z\} = \langle x|y\rangle z + \langle z|y\rangle  x -\langle x|\overline{z}\rangle \overline{y},
\end{equation} and \begin{equation}\label{eq spin norm} \|x\|^2 = \langle x|x\rangle  + \sqrt{\langle x|x\rangle ^2 -|
\langle x|\overline{x}\rangle  |^2},
 \end{equation} respectively. All we need to know about Cartan factors of types 5 and 6 is that they are finite dimensional (see \cite[\S 3]{Ka81} or \cite[page 199]{Ka97}) for additional details).\smallskip

Let $E$ be a JB$^*$-triple. Elements in $E$ which are fixed points for the triple product are called \emph{tripotents}. Each tripotent $e\in {E}$ induces a decomposition of ${E}$ in terms of the eigenspaces of the operator $L(e,e)$ given by
\begin{equation}\label{Peirce decomp} {E} = {E}_{0} (e) \oplus  {E}_{1} (e) \oplus {E}_{2} (e),\end{equation} where ${
E}_{k} (e) := \{ x\in {E} : L(e,e)x = {\frac k 2} x\}$ is a subtriple of ${E}$ (for $k=0,1,2$). The natural projection of ${E}$ onto ${E}_{k} (e)$ is called the Peirce-$k$ projection and will be denoted by $P_{k} (e)$. We shall apply later that Peirce projections are all contractive (cf. \cite{FriRu85}). The so-called \emph{Peirce rules} predict the triple products among Peirce subspaces in the following way:
$$\J {{E}_{k}(e)}{{E}_{l}(e)}{{E}_{m}(e)}\! \subseteq {E}_{k-l+m} (e),\!\!\hbox{ and }\!\! \J {{E}_{0}(e)}{{E}_{2} (e)}{{E}}\! =\! \J {{E}_{2} (e)}{{E}_{0} (e)}{{E}}\! =\! \{0\},$$
where ${E}_{k-l+m} (e) = \{0\}$ whenever $k-l+m$ is not in $\{0,1,2\}$. Another connection with the Jordan theory tells that ${E}_{2} (e)$ is a unital
JB$^*$-algebra with respect to the product and involution given by $x \circ_e y = \J xey$ and $x^{*_e} = \J exe,$ respectively. Furthermore,  $E_{2} (e)$ is a JBW$^*$-algebra when $E$ is a JBW$^*$-triple. The self-adjoint or hermitian part of $E_2(e)$ will be denoted by $E_2(e)_{sa}$, that is, $$E_2(e)_{sa}=\{x\in E_2(e) : x^{*_e} = x \}.$$

A tripotent $e$ in $E$ is called \emph{minimal} (respectively, \emph{complete} or \emph{maximal}) if  $E_2(e)=\CC e \neq \{0\}$ (respectively, $E_0 (e) =\{0\}$). We shall say that $e$ is a \emph{unitary tripotent} if $E_2(e) =E$.\smallskip

Two tripotents $u,v$ in a JB$^*$-triple $E$ are called \emph{collinear} ($u\top v$ in short) if $u\in E_1(v)$ and $v\in E_1(u)$. We shall say that $u$ \emph{governs} $v$  ($u \vdash v$ in short) if $v\in U_{2} (u)$ and $u\in U_{1} (v)$.\smallskip

Elements $x$ and $y$ in a JB$^*$-triple $E$ are called \emph{orthogonal} ($x\perp y$ in short) if $L(x,y)=0$ (equivalently $L(y,x)=0$, compare \cite[Lemma 1.1]{BurFerGarMarPe}). If $e$ and $v$ are tripotents in $E$, it can be shown that $e \perp v$ if and only if $e\in E_{0} (v)$. A subset $\mathcal{S}\subseteq E$ is said to be \emph{orthogonal} if $0\notin \mathcal{S}$ and $x\perp y$ for every $x\neq y$ in $\mathcal{S}$. The \emph{rank} of a JB$^*$-triple $E$ is defined as the minimal cardinal number $r$ satisfying $\hbox{card}(\mathcal{S})\leq r$ whenever $\mathcal{S}$ is an orthogonal subset of $E$. It is known that a JB$^*$-triple $E$ is a reflexive Banach space if it has finite rank (cf. \cite[Proposition 4.5]{BuChu} and \cite[Theorem 6]{ChuIo90}).\smallskip

We shall also employ the natural partial order on the set of tripotents in a JB$^*$-triple $E$ defined in the following way: Given two tripotents $u,e$ in $E$, we shall say that $u \leq e$ if $e-u$ is a tripotent in $E$ and $e-u \perp u$. \smallskip

Let $\varphi$ be a norm-one functional in the dual, $E^*$, of a JB$^*$-triple $E$. Suppose $e$ is a tripotent in $E$ satisfying $\varphi(e) = 1$. Then $\varphi = \varphi P_2(e)$ and $\varphi|_{E_2(e)}$ is a positive norm-one functional in the dual of the JB$^*$-algebra $E_2(e)$ (cf. \cite[Proposition 1]{FriRu85}). Another technical result, due to Friedman and Russo, required for later purposes, affirms the following: \begin{equation}\label{eq FR 1.6} e\in E\mbox{ is a tripotent}, \ x\in S(E) \hbox{ with } P_2(e)(x) = e \Rightarrow x = e+ P_0(e) (x),
\end{equation} (see \cite[Lemma 1.6]{FriRu85}). We shall say that a tripotent $e\in E$ has rank $k$ if the JB$^*$-triple $E_2(e)$ has the same rank.\smallskip

Let $X$ be a real or complex Banach space with dual space $X^*$. Suppose $F$ and $G$ are two subsets of $\mathcal{B}_{X}$ and $\mathcal{B}_{X^*}$, respectively.  Then we set
$$ F^{\prime} =F^{\prime,X^*} =  \{a \in \mathcal{B}_{X^*}:a(x) = 1 \mbox{ for } x \in F\},$$
$$G_{\prime} =G_{\prime,X}= \{x \in \mathcal{B}_{X}:a(x) = 1\mbox{ for } a \in G\}.$$
Clearly, $F^{\prime}$ is a weak$^*$-closed face of $\mathcal{B}_{X^*}$ and $G_{\prime}$
is a norm closed face of $\mathcal{B}_{X}$. We say that $F$ is a \emph{norm-semi-exposed face} of $\mathcal{B}_{X}$ (respectively, $G$ is a \emph{weak$^*$-semi-exposed face} of $\mathcal{B}_{X^*}$) if $F=(F^{\prime})_{\prime}$ (respectively, $G= (G_{\prime})^{\prime}$). It is known that the mappings $F \mapsto F^{\prime}$
and $G \mapsto G_{\prime}$ are anti-order isomorphisms between the
complete lattices $\mathcal{S}_n(\mathcal{B}_{X})$, of norm-semi-exposed faces
of $\mathcal{B}_X,$ and $\mathcal{S}_{w^*}( \mathcal{B}_{X^*}),$ of weak$^*$-semi-exposed
faces of $\mathcal{B}_{X^*}$, and are inverses of each other. A face of $\mathcal{B}_X$ is called proper if it does not coincide with the whole $\mathcal{B}_X$, therefore every proper face of $\mathcal{B}_X$ is contained in the unit sphere of $X$.\smallskip

In a JBW$^*$-triple $M$, every weak$^*$-closed face of $\mathcal{B}_{M}$ is weak$^*$-semi-exposed and the assignment \begin{equation}\label{eq anti order weak* closed faces complex} u \mapsto
(\{u\}_{\prime})^{\prime} = u + \mathcal{B}_{M_0(u)}
\end{equation} is an anti-order isomorphism from the partially ordered set of tripotents in $M$ onto the partial ordered of weak$^*$-closed faces of $\mathcal{B}_{M}$ (cf. \cite[Theorem 4.6]{EdRutt88}). Furthermore, every norm closed face of $\mathcal{B}_{M_{*}}$ is norm-semi-exposed and is of the form  $\{u\}_{\prime}$ for a tripotent $u\in M$ (see \cite[Theorem 4.4]{EdRutt88}). The main results in \cite{EdFerHosPe2010} and \cite{FerPe10} establish that norm closed faces of the closed unit ball of a JB$^*$-triple $E$ and weak$^*$ closed faces of the closed unit ball of its dual are in one-to-one correspondence with the compact tripotents in $E^{**}$.\smallskip

Let us give the first observation on the metric structure of the unit sphere of a JB$^*$-triple.

\begin{lemma}\label{l distance 2 from a tripotent} Let $e$ be a non-zero tripotent in a JB$^*$-triple $E$. Suppose $x$ is a norm-one element in $E$ such that $\|e+ x\| =2$ then $\|e+ P_2(e)(x)\| =2$ and $\|P_2(e)(x)\| =1$.
\end{lemma}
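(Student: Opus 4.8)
The plan is to produce a single norm-one functional that simultaneously witnesses all three desired norm equalities. First I would invoke the Hahn--Banach theorem to choose $\varphi \in S(E^*)$ with $\varphi(e+x) = \|e+x\| = 2$. Writing $\varphi(e) + \varphi(x) = 2$ and combining $\Re\varphi(e) \leq |\varphi(e)| \leq 1$ with $\Re\varphi(x) \leq |\varphi(x)| \leq 1$, the fact that the two real parts sum to $2$ forces $\Re\varphi(e) = \Re\varphi(x) = 1$; since a complex number of modulus at most $1$ whose real part equals $1$ must itself equal $1$, this gives $\varphi(e) = \varphi(x) = 1$.

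Next, since $e$ is a tripotent with $\varphi(e) = 1$, the result of Friedman and Russo recorded above (\cite[Proposition 1]{FriRu85}) yields $\varphi = \varphi P_2(e)$. Applying this identity to $x$ gives $\varphi(P_2(e)(x)) = \varphi(x) = 1$. Because the Peirce-$2$ projection is contractive (as noted in the excerpt, cf.\ \cite{FriRu85}), we have $\|P_2(e)(x)\| \leq \|x\| = 1$, while $\|P_2(e)(x)\| \geq |\varphi(P_2(e)(x))| = 1$; hence $\|P_2(e)(x)\| = 1$. Finally $\varphi(e + P_2(e)(x)) = \varphi(e) + \varphi(P_2(e)(x)) = 2$, and since $\|\varphi\| = 1$ this forces $\|e + P_2(e)(x)\| = 2$, the reverse inequality $\|e + P_2(e)(x)\| \leq \|e\| + \|P_2(e)(x)\| = 2$ being automatic.

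The argument is essentially bookkeeping with one support functional, so I do not expect a serious obstacle once the two ingredients are in place. The only points requiring a little care are the passage from real to complex scalars in the first step, and the key structural observation that the functional $\varphi$ selected at $e+x$ is automatically adapted to $E_2(e)$ via the Friedman--Russo identity $\varphi = \varphi P_2(e)$; it is precisely this identity that lets $P_2(e)(x)$ inherit both the norming property $\varphi(P_2(e)(x)) = 1$ and the distance-$2$ behaviour, without any further estimate.
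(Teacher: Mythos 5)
Your proof is correct and follows essentially the same route as the paper's: both select a norm-one functional $\varphi$ with $\varphi(e+x)=2$, deduce $\varphi(e)=\varphi(x)=1$, and then use the Friedman--Russo identity $\varphi=\varphi P_2(e)$ together with contractivity of the Peirce projection to obtain both norm equalities. The paper merely compresses these steps into a single chain of (in)equalities, so your version is just a more detailed write-up of the identical argument.
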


\begin{proof} Let us take a norm-one functional $\varphi\in E^*$ satisfying $\varphi (e + x) =2$. It then follows that $1=\varphi(e) = \varphi (x) = \varphi P_2(e) (x) \leq \|P_2(e)(x)\|\leq 1,$ and thus $2= \varphi (e+P_2(e)(x)) \leq \|e+P_2(e)(x)\| \leq  2,$ which gives the desired statement.
\end{proof}

One of the interesting geometric properties of JB$^*$-triples guarantees that the extreme points of the closed unit ball of a JB$^*$-triple $E$ are precisely the complete tripotents in $E$
(cf. \cite[Lemma 4.1]{BraKaUp78} and \cite[Proposition 3.5]{KaUp77}).\smallskip

Every proper norm closed face of the closed unit ball of a JB$^*$-triple $E$ is norm-semi-exposed (cf. \cite[Corollary 3.11]{EdFerHosPe2010}). It is shown in the proof of \cite[Proposition 2.4]{FerGarPeVill17} that every norm-semi-exposed face of the closed unit ball of $E$ is an intersection face in the sense employed in \cite[Lemma 8]{MoriOza2018}, that is, every norm-semi-exposed face of the closed unit ball of $E$ coincides with the intersection of all maximal proper norm closed faces containing it. Combining these arguments with the just quoted Lemma 8 in \cite{MoriOza2018} we get the following lemma.

\begin{lemma}\label{l intersection faces MoriOzawa L8}{\rm(\cite[Lemma 8]{MoriOza2018}, \cite[Proposition 2.4]{FerGarPeVill17}, \cite[Corollary 3.11]{EdFerHosPe2010})} Let $\Delta: S(E)\to S(Y)$ be a surjective isometry where $E$ is a JB$^*$-triple and $Y$ is a real Banach space. Then $\Delta$ maps proper norm closed faces of $\mathcal{B}_{E}$ to intersection faces in $S(Y)$. Furthermore, if $F$ is a proper norm closed face of $\mathcal{B}_{E}$ then $\Delta(-F) = -\Delta(F)$.
\end{lemma}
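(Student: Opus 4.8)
The plan is to reduce the statement to Lemma~8 of \cite{MoriOza2018}. That lemma operates on \emph{intersection faces}, i.e.\ subsets of $S(X)$ which coincide with the intersection of all maximal proper norm closed faces of $\mathcal{B}_X$ containing them, and it asserts that a surjective isometry between the unit spheres of real Banach spaces carries intersection faces to intersection faces and satisfies $\Delta(-F)=-\Delta(F)$ on them. Hence it suffices to prove the purely geometric fact that every proper norm closed face $F$ of $\mathcal{B}_E$ is an intersection face; the two assertions about $\Delta$ then follow with no further reference to the isometry.

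To establish this geometric fact I would first recall from \cite[Corollary 3.11]{EdFerHosPe2010} that every proper norm closed face $F$ of $\mathcal{B}_E$ is norm-semi-exposed, so that, in the notation of the excerpt, $F=(F')_{\prime}$ with $F'$ a weak$^*$-closed (hence, by Banach--Alaoglu, weak$^*$-compact) face of $\mathcal{B}_{E^*}$. Since for fixed $x\in\mathcal{B}_E$ the set $\{\varphi:\varphi(x)=1\}$ is weak$^*$-closed and convex, an element $x$ lies in $(F')_{\prime}$ as soon as $\varphi(x)=1$ for every extreme point $\varphi$ of $F'$; combined with the Krein--Milman theorem $F'=\overline{\operatorname{co}}^{\,w^*}(\operatorname{ext}(F'))$ this gives
\[
F=(F')_{\prime}=\bigcap_{\varphi\in\operatorname{ext}(F')}\{\varphi\}_{\prime}.
\]

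Each extreme point $\varphi$ of $F'$ is in particular an extreme point of $\mathcal{B}_{E^*}$, i.e.\ a pure atom of the JB$^*$-triple $E$; by the facial structure theory (the anti-order isomorphism $F\mapsto F'$, $G\mapsto G_{\prime}$ recalled in the excerpt, together with \cite{EdRutt88,EdFerHosPe2010,FerPe10}) such a $\varphi$ is weak$^*$-exposed and the face $\{\varphi\}_{\prime}$ it exposes in $\mathcal{B}_E$ is a maximal proper norm closed face. Since moreover $\varphi\in F'$ forces $F\subseteq\{\varphi\}_{\prime}$, the displayed identity exhibits $F$ as the intersection of maximal proper norm closed faces containing it, that is, as an intersection face; this is exactly the line of argument carried out in the proof of \cite[Proposition 2.4]{FerGarPeVill17}. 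Applying Lemma~8 of \cite{MoriOza2018} then yields both conclusions.

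The delicate point I expect is the identification in the previous paragraph: one must check that the maximal proper norm closed faces generated by the extreme points of $\mathcal{B}_{E^*}$ through the Jordan-theoretic facial calculus are genuinely the maximal faces on which Lemma~8 of \cite{MoriOza2018} acts, and that the reduction covers \emph{all} proper norm closed faces rather than only the exposed ones. Both are guaranteed by the semi-exposedness statement \cite[Corollary 3.11]{EdFerHosPe2010} and by the anti-order isomorphism between the lattices $\mathcal{S}_n(\mathcal{B}_E)$ and $\mathcal{S}_{w^*}(\mathcal{B}_{E^*})$; granting them, the proof is complete without any additional estimate.
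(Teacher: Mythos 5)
Your argument is correct and follows essentially the same route as the paper: reduce to \cite[Lemma 8]{MoriOza2018} by showing every proper norm closed face is an intersection face, via semi-exposedness (\cite[Corollary 3.11]{EdFerHosPe2010}) and the Krein--Milman/extreme-point argument, which is precisely the content the paper delegates to the proof of \cite[Proposition 2.4]{FerGarPeVill17}. The only difference is that you spell out that middle step explicitly rather than citing it, which is harmless.
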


The next corollary is a straightforward consequence of the previous lemma.

\begin{corollary}\label{c antipodal for extreme points} Let $\Delta: S(E)\to S(Y)$ be a surjective isometry where $E$ is a JB$^*$-triple and $Y$ is a real Banach space. Suppose $u$ is a complete tripotent in $E$, then $\Delta(u)$ is an extreme point of $\mathcal{B}_Y$ and $\Delta(-u) = -\Delta(u)$.
\end{corollary}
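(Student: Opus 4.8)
The plan is to derive everything directly from Lemma~\ref{l intersection faces MoriOzawa L8}, applied to the one-point face determined by $u$. First I would invoke the geometric characterisation quoted just above, namely that the extreme points of $\mathcal{B}_E$ are precisely the complete tripotents of $E$. Thus $u$ is an extreme point, and hence the singleton $F=\{u\}$ is a proper norm closed face of $\mathcal{B}_E$: it is trivially norm closed, it is a face because $u$ is extreme, and it is proper since $\mathcal{B}_E$ contains other points, for instance $0$.

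Next I would feed $F=\{u\}$ into Lemma~\ref{l intersection faces MoriOzawa L8}. Since $\Delta$ is a bijection between $S(E)$ and $S(Y)$, we have $\Delta(F)=\{\Delta(u)\}$, and the lemma yields two facts at once: that $\{\Delta(u)\}$ is an intersection face of $S(Y)$, and that $\Delta(-u)=-\Delta(u)$ (this is precisely the final assertion of the lemma, using $-F=\{-u\}$). The antipodality claim is therefore immediate, and it only remains to check that a singleton which happens to be an intersection face must consist of an extreme point of $\mathcal{B}_Y$.

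For this last step I would unwind the definition of intersection face: $\{\Delta(u)\}$ is the intersection of all maximal proper norm closed faces of $\mathcal{B}_Y$ containing it, and each such maximal face has the form $C_\psi=\{y\in\mathcal{B}_Y:\psi(y)=1\}$ for some $\psi\in S(Y^*)$. By Hahn--Banach at least one such $\psi$ exists (a norming functional for $\Delta(u)$), so we may write $\{\Delta(u)\}=\bigcap_{i}C_{\psi_i}$ over a non-empty family. Now suppose $\Delta(u)=\tfrac12(a+b)$ with $a,b\in\mathcal{B}_Y$. For every $i$, the estimate $1=\psi_i(\Delta(u))=\tfrac12(\psi_i(a)+\psi_i(b))\le 1$ forces $\psi_i(a)=\psi_i(b)=1$, whence $a,b\in\bigcap_i C_{\psi_i}=\{\Delta(u)\}$ and so $a=b=\Delta(u)$. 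Thus $\Delta(u)$ is an extreme point of $\mathcal{B}_Y$.

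I would expect no serious obstacle here; the statement is advertised as a straightforward corollary, and the only point requiring a little care is the purely geometric passage from ``singleton intersection face'' to ``extreme point'', that is, making explicit which functionals cut out the face and running the midpoint argument above. The genuine content is already packed into Lemma~\ref{l intersection faces MoriOzawa L8}, which simultaneously supplies the intersection-face image and the antipodal identity, and this is exactly what keeps the proof short.
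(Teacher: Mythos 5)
Your argument is correct and follows exactly the route the paper intends: the paper states this corollary without proof as a ``straightforward consequence'' of Lemma~\ref{l intersection faces MoriOzawa L8}, and your application of that lemma to the singleton face $\{u\}$ (legitimate since complete tripotents are precisely the extreme points of $\mathcal{B}_E$), together with the standard description of maximal proper faces of $\mathcal{B}_Y$ as sets $\psi^{-1}(\{1\})\cap\mathcal{B}_Y$ and the midpoint argument, is the natural way to fill in the details. The only cosmetic remark is that the non-emptiness of the family of maximal faces containing $\{\Delta(u)\}$ is better justified by Zorn's lemma (every convex subset of the sphere lies in a maximal one, as the paper itself notes later) than by a norming functional, whose face need not be maximal; this does not affect the argument.
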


\begin{corollary}\label{c Tingley antipodal thm for tripotents} Let $\Delta: S(E)\to S(Y)$ be a surjective isometry where $E$ is a JB$^*$-triple and $Y$ is a real Banach space. Suppose $e$ is a non-zero tripotent in $E$, then $\Delta(-e) = -\Delta(e)$.
\end{corollary}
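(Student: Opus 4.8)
The plan is to reduce the general case of an arbitrary non-zero tripotent $e$ to the already-established case of complete tripotents handled in Corollary~\ref{c antipodal for extreme points}. The obstacle is that $e$ need not be complete, so $\Delta(e)$ need not be an extreme point and we cannot invoke the antipodality result directly. First I would observe that every tripotent $e$ can be enlarged to a complete tripotent: since $E_0(e)$ is itself a JB$^*$-triple, one can find (working in the bidual $E^{**}$, which is a JBW$^*$-triple and hence has an abundance of tripotents) a tripotent $w \in E_0(e)$ such that $u := e + w$ is complete, with $e \perp w$ and $e \leq u$ in the natural partial order. The difficulty is that $w$ may only live in $E^{**}$ rather than in $E$ itself; to keep the argument inside $E$ I would instead exploit the facial structure directly.

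The cleaner route is to use Lemma~\ref{l intersection faces MoriOzawa L8}, which already gives $\Delta(-F) = -\Delta(F)$ for every proper norm closed face $F$ of $\mathcal{B}_E$. So the strategy is to attach to the tripotent $e$ a proper norm closed face of which $e$ is a distinguished point and then transport the antipodal statement through that face. Concretely, by the correspondence recalled in \eqref{eq anti order weak* closed faces complex} and the results of \cite{EdFerHosPe2010,FerPe10}, the set $F_e := \{x \in \mathcal{B}_E : P_2(e)(x) = e\}$ is a proper norm closed face of $\mathcal{B}_E$ containing $e$; indeed by \eqref{eq FR 1.6} we have $F_e = e + \mathcal{B}_{E_0(e)}$, the norm closed face associated to the compact tripotent $e$ in $E^{**}$. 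Then $-F_e = -e + \mathcal{B}_{E_0(e)}$ is the face associated to $-e$, and by Lemma~\ref{l intersection faces MoriOzawa L8} we get $\Delta(-F_e) = -\Delta(F_e)$.

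To pin down $\Delta(\pm e)$ from this, I would use that the antipodal relation between the two faces $F_e$ and $-F_e$ is detected metrically: $e$ and $-e$ are precisely the two points of $F_e$ and $-F_e$ at maximal mutual distance, namely $\|e-(-e)\| = 2$, and more importantly any point $y \in \Delta(F_e)$ together with the corresponding antipode $-y \in \Delta(-F_e)$ must come from a pair at distance $2$. Since $\Delta$ is a surjective isometry it preserves the set of pairs at distance $2$ within these faces, so the element realizing the "center" or the unique point of $F_e$ at distance $2$ from all of $-F_e$ (which is exactly $e$, by Lemma~\ref{l distance 2 from a tripotent}) is carried to an element of $\Delta(F_e)$ whose antipode lies in $\Delta(-F_e) = -\Delta(F_e)$.

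The main obstacle, and the step requiring the most care, is to show that $\Delta(e)$ and $\Delta(-e)$ are genuinely antipodal as \emph{points}, not merely that the two faces are antipodal as \emph{sets}. I expect the clean way around this is the following: apply the argument to a complete tripotent $u \geq e$, for which Corollary~\ref{c antipodal for extreme points} already yields $\Delta(-u) = -\Delta(u)$, and then descend. Given $\|e + x\| = 2$ for $x = u - e \in E_0(e)$, one has $\|e - (-e)\| = 2$ and, using that $\Delta$ preserves distance-$2$ pairs together with the characterization of $e$ inside $F_e$ via Lemma~\ref{l distance 2 from a tripotent}, identify $\Delta(-e)$ as the unique point forced to equal $-\Delta(e)$. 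So the heart of the proof is combining the face-antipodality of Lemma~\ref{l intersection faces MoriOzawa L8} with the distance-$2$ rigidity of Lemma~\ref{l distance 2 from a tripotent} to upgrade set-level antipodality to point-level antipodality for the specific points $\pm e$.
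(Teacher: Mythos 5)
You correctly identify the two ingredients the paper's proof uses --- the face $F_e^E = e + \mathcal{B}_{E_0(e)}$ and the set-level antipodality $\Delta(-F_e^E) = -\Delta(F_e^E)$ from Lemma~\ref{l intersection faces MoriOzawa L8} --- and you correctly isolate the remaining difficulty, namely upgrading set-level antipodality to the point-level statement $\Delta(-e)=-\Delta(e)$. But the metric characterization you propose for this last step does not work. You want to single out $e$ as ``the unique point of $F_e$ at distance $2$ from all of $-F_e$'' (equivalently, via maximal mutual distance between the faces). In fact \emph{every} point of $F_e^E$ has this property: for $x,y\in\mathcal{B}_{E_0(e)}$ the difference $(e+x)-(-e+y)=2e+(x-y)$ is a sum of two orthogonal elements, so its norm is $\max(2,\|x-y\|)=2$. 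Thus all cross-pairs between the two faces are at distance exactly $2$, and these distances carry no information whatsoever about which point is $e$. Moreover, Lemma~\ref{l distance 2 from a tripotent}, which you invoke as the source of ``distance-$2$ rigidity'', is a statement about elements of the JB$^*$-triple $E$ and cannot be transported to the $Y$ side, where there is no triple structure; and the reduction to a complete tripotent $u\ge e$ founders (as you half-acknowledge) on the fact that the complementary tripotent may live only in $E^{**}$.

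The characterization that does close the argument --- and is the one the paper uses --- is \emph{intrinsic} to the face rather than relative to its antipode: since $F_e^E=e+\mathcal{B}_{E_0(e)}$ is a translate of a ball centred at $e$, the point $e$ is the unique element of $F_e^E$ whose distance to every other element of $F_e^E$ is at most $1$. (If $0\ne x\in\mathcal{B}_{E_0(e)}$, then $e-x/\|x\|$ also lies in $F_e^E$ and its distance to $e+x$ equals $\|x\|+1>1$.) This property is purely metric, hence preserved by $\Delta$: $\Delta(-e)$ is the unique point of $\Delta(-F_e^E)$ within distance $1$ of all of $\Delta(-F_e^E)$. Since $-\Delta(e)\in-\Delta(F_e^E)=\Delta(-F_e^E)$ and, for every $b=-\Delta(x)\in\Delta(-F_e^E)$ with $x\in F_e^E$, one has $\|-\Delta(e)-b\|=\|\Delta(x)-\Delta(e)\|=\|x-e\|\le1$, uniqueness forces $\Delta(-e)=-\Delta(e)$. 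Your proposal is missing precisely this intrinsic distance-$\le 1$ characterization; with the distance-$2$ criterion you chose, the argument cannot be completed.
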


\begin{proof} Let us consider the norm closed proper face $F_e^E = e+\mathcal{B}_{E_0(e)}$. It is easy to check that $e$ (respectively, $-e$) is the unique element in $F_e^E$ (respectively, in $-F_e^E$) whose distance to any other element in $F_e^E$  (respectively, in  $-F_e^E$) is smaller than or equal to $1$. Therefore $\Delta(-e)$ (respectively, $\Delta(e)$) is the unique element in $\Delta(-F_e^E)$ (respectively, in $\Delta(F_e^E)$) satisfying $\|\Delta(-e) - b\|\leq 1$ for all $b\in \Delta(-F_e^E)$ (respectively, in $\Delta(F_e^E)$). Lemma \ref{l intersection faces MoriOzawa L8} assures that $-\Delta(e)\in \Delta(-F_e^E),$ and for each $b\in \Delta(-F_e^E) = - \Delta(F_e^E),$ there exists $x\in F_v^E$ such that $-b= \Delta(x)$, then we have $$\|-\Delta(e) - b\| = \|-\Delta(e) + \Delta(x) \| = \|x-e \|\leq 1,$$ which guarantees that $\Delta(-e) = -\Delta(e)$.
\end{proof}

It follows from the study on the geometric structure of the predual of a JBW$^*$-triple in \cite{FriRu85} that the extreme points in the closed unit ball of the dual space, $E^*$, of a JB$^*$-triple $E$ are in one-to-one correspondence with the minimal tripotents in $E^{**}$ via the following correspondence:
\begin{equation}\label{eq pure atoms and minimal partial isometries}\begin{gathered} \hbox{For each $\varphi\in\mbox{ext\,} \mathcal{B}_{E^*}$ there exists a unique minimal tripotent $v\in E^{**}$}\\ \hbox{satisfying $\varphi(x) v = P_2(v) (x)$ for all $x\in E^{**}$},
\end{gathered}\end{equation} (see \cite[Proposition 4]{FriRu85}). Extreme points of $\mathcal{B}_{E^*}$ are called \emph{pure atoms}. For each minimal tripotent $v$ in $E^{**}$, we shall write $\varphi_v$ for the unique pure atom associated with $v$.\smallskip

Another ingredient in our arguments is related to the facial structure of JB$^*$-triples. By the JB$^*$-triple version of Kadison's transitivity theorem (see \cite[Theorem 3.3]{BuFerMarPe}), each maximal norm closed proper face of $\mathcal{B}_E$ is of the form \begin{equation}\label{eq maximal proper faces} F_v^{E}= (v + \mathcal{B}_{_{E_0^{**}(v)}}) \cap E,
\end{equation} where $v$ is a minimal tripotent in $E^{**}$ (see \cite[Corollary 3.5]{BuFerMarPe} and \cite{EdFerHosPe2010}).\smallskip

The following two results have been borrowed from \cite{BeCuFerPe2018}.

\begin{lemma}\label{l existence of support functionals for the image of a face}\cite[Lemma 4.7]{BeCuFerPe2018} Let $E$ be a JB$^*$-triple and let $Y$ be a real Banach space. Suppose $\Delta : S(E)\to S(Y)$ is a surjective isometry. Then for each maximal proper norm closed face $F$ of the closed unit ball of $E$ the set $$\hbox{supp}_{\Delta}(F) := \{\psi\in Y^* : \|\psi\|=1,\hbox{ and } \psi^{-1} (\{1\})\cap \mathcal{B}_{Y} = \Delta(F) \}$$ is a non-empty weak$^*$ closed face of $\mathcal{B}_{Y^*}$; in other words, for each minimal tripotent $v$ in $E^{**}$ the set $$ \hbox{supp}_{\Delta}(F_v^{E}) := \{\psi\in Y^* : \|\psi\|=1,\hbox{ and } \psi^{-1} (\{1\})\cap \mathcal{B}_{Y} = \Delta(F_v^E) \}$$ is a non-empty weak$^*$ closed face of $\mathcal{B}_{Y^*}.$
\end{lemma}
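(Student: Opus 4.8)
The plan is to identify $\hbox{supp}_{\Delta}(F)$ with the weak$^*$-closed face $(\Delta(F))^{\prime}$ attached to $\Delta(F)$ through the polarity $G\mapsto G^{\prime}$, $G\mapsto G_{\prime}$ recalled above, and to prove it non-empty by showing that $\Delta(F)$ is a maximal proper norm-semi-exposed face of $\mathcal{B}_Y$. First observe that the two formulations of the statement coincide: by \eqref{eq maximal proper faces} every maximal proper norm closed face $F$ of $\mathcal{B}_E$ has the form $F_v^E$ for a minimal tripotent $v\in E^{**}$, so it suffices to argue for an arbitrary maximal proper norm closed face $F$. Since $\psi^{-1}(\{1\})\cap\mathcal{B}_Y=\{\psi\}_{\prime}$, we may rewrite
$$\hbox{supp}_{\Delta}(F)=\{\psi\in\mathcal{B}_{Y^*}:\|\psi\|=1,\ \{\psi\}_{\prime}=\Delta(F)\}.$$

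The soft part of the argument is the following identification. Write $C:=\Delta(F)$ and suppose it has already been shown that $C$ is a maximal proper norm closed face of $\mathcal{B}_Y$ with $C^{\prime}\neq\emptyset$; I claim that then $\hbox{supp}_{\Delta}(F)=C^{\prime}$. The inclusion $\hbox{supp}_{\Delta}(F)\subseteq C^{\prime}$ is immediate, since $\{\psi\}_{\prime}=C$ forces $\psi\equiv 1$ on $C$. Conversely, take $\psi\in C^{\prime}$. As $C$ is a non-empty subset of $S(Y)$ on which $\psi\equiv 1$, we get $\|\psi\|=1$; moreover $\{\psi\}_{\prime}$ is a proper norm closed face of $\mathcal{B}_Y$ (it is contained in $S(Y)$ because $\|\psi\|=1$) containing $C$, so the maximality of $C$ yields $\{\psi\}_{\prime}=C$, i.e. $\psi\in\hbox{supp}_{\Delta}(F)$. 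Since $C^{\prime}$ is always a weak$^*$-closed face of $\mathcal{B}_{Y^*}$, this already gives that $\hbox{supp}_{\Delta}(F)$ is a non-empty weak$^*$-closed face.

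It remains to establish the hypotheses of the previous paragraph, namely that $\Delta(F)$ is a maximal proper norm-semi-exposed face of $\mathcal{B}_Y$: for such a face the anti-order isomorphism between the norm-semi-exposed faces of $\mathcal{B}_Y$ and the weak$^*$-semi-exposed faces of $\mathcal{B}_{Y^*}$ sends it precisely to the non-empty face $(\Delta(F))^{\prime}$, giving $C^{\prime}\neq\emptyset$. Here I would proceed as follows. In the JB$^*$-triple $E$ every proper norm closed face of $\mathcal{B}_E$ is norm-semi-exposed, so $F$ is an intersection face, and in fact a maximal element in the lattice of proper norm closed faces; by Lemma~\ref{l intersection faces MoriOzawa L8} its image $\Delta(F)$ is an intersection face of $S(Y)$. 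To upgrade this to maximality and semi-exposedness on the target side I would invoke the purely metric description of maximal proper norm closed faces underlying \cite[Lemma 8]{MoriOza2018}: a surjective isometry between unit spheres of real Banach spaces carries maximal proper norm closed faces onto maximal proper norm closed faces, and in an arbitrary Banach space such maximal faces are exposed by a norm-one functional, hence norm-semi-exposed. Applying this to the maximal face $F$ delivers exactly that $\Delta(F)$ is a maximal proper norm-semi-exposed face of $\mathcal{B}_Y$. The main obstacle is precisely this last step: since $Y$ carries no triple structure, one cannot appeal to the JB$^*$-triple facial calculus on the target, and the whole difficulty lies in transferring maximality and, above all, the existence of an exposing functional across $\Delta$ using only the metric invariants preserved by a surjective isometry of spheres; once $\Delta(F)$ is known to be maximal and semi-exposed, the identification $\hbox{supp}_{\Delta}(F)=(\Delta(F))^{\prime}$ and the conclusion are the routine order-theoretic consequences displayed above.
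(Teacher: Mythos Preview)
The paper does not prove this lemma; it simply quotes it from \cite[Lemma 4.7]{BeCuFerPe2018}. So there is no ``paper's own proof'' to compare against, and what you have written is a reconstruction of why the cited result holds.

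Your reconstruction is essentially correct and is indeed the standard route to this fact. The identification $\hbox{supp}_{\Delta}(F)=(\Delta(F))^{\prime}$ under the hypothesis that $\Delta(F)$ is a maximal proper norm closed face of $\mathcal{B}_Y$ is clean and correct. The two ingredients you flag as the ``main obstacle'' are both established facts in the Tingley-problem literature: (1) a surjective isometry between unit spheres sends maximal convex subsets of the sphere to maximal convex subsets of the sphere (this is in Cheng--Dong \cite{ChenDong2011} and is purely metric, using that maximal faces can be characterised via distances; it is also implicit in the machinery around \cite[Lemma 8]{MoriOza2018}); and (2) every maximal convex subset of the unit sphere of an arbitrary real Banach space is exposed by a norm-one functional (Hahn--Banach separation of the convex set $F\subset S(Y)$ from the open unit ball, followed by maximality). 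Once you cite these explicitly rather than gesturing at them, the argument is complete. Your candid acknowledgement that ``the whole difficulty lies in transferring maximality and the existence of an exposing functional'' is accurate, but you should not leave it as an obstacle: both facts are known and are exactly what is used in \cite{BeCuFerPe2018}.
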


The next corollary is one of the consequences of the fact that the closed unit ball of every JBW$^*$-triple $M$ satisfies the \emph{strong
Mankiewicz property} (cf. \cite[Corollary 2.2]{BeCuFerPe2018}).

\begin{corollary}\label{c closed faces associated with tripotents down}\cite[Corollary 4.1]{BeCuFerPe2018} Let $M$ be a JBW$^*$-triple, let $Y$ be a Banach space, and let $\Delta : S(M)\to S(Y)$ be a surjective isometry. Suppose $e$ is a non-zero tripotent in $M$, and let $F^M_e = e+ \mathcal{B}_{M_0(e)}=\left(e+ \mathcal{B}_{M^{**}_0(e)}\right)\cap M$ denote the proper norm closed face of $\mathcal{B}_{M}$ associated with $e$. Then the restriction of $\Delta$ to $F^{M}_e$ is an affine function. Furthermore, there exists a real linear isometry $T_e$ from $M_0(e)$ onto a norm closed subspace of $Y$ satisfying $\Delta( {e}+ x ) = T_e (x) +\Delta(e)$ for all $x\in \mathcal{B}_{M_0(e)}.$
\end{corollary}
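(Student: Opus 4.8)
The plan is to reduce the statement to the \emph{strong Mankiewicz property} of the closed unit ball of a JBW$^*$-triple recorded in \cite[Corollary 2.2]{BeCuFerPe2018}, which guarantees that every surjective isometry from $\mathcal{B}_N$ (with $N$ a JBW$^*$-triple) onto a \emph{convex} subset of a Banach space is automatically affine. The two ingredients I need are: first, that the face $F^M_e$ is, up to an affine isometry, the unit ball of a JBW$^*$-triple; and second, that its image $\Delta(F^M_e)$ is convex.

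For the first point, recall that the Peirce-$0$ subspace $M_0(e)$ is a weak$^*$-closed subtriple of $M$, hence a JBW$^*$-triple in its own right, and that the translation $x\mapsto e+x$ is an isometric affine bijection from $\mathcal{B}_{M_0(e)}$ onto $F^M_e=e+\mathcal{B}_{M_0(e)}$. For the second point I invoke Lemma~\ref{l intersection faces MoriOzawa L8}: since $F^M_e$ is a proper norm closed face of $\mathcal{B}_M$, its image $\Delta(F^M_e)$ is an intersection face in $S(Y)$, that is, an intersection of maximal proper norm closed faces of $\mathcal{B}_Y$; being an intersection of convex sets, it is convex.

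With these in hand I would consider the map $\Phi\colon \mathcal{B}_{M_0(e)}\to Y$ given by $\Phi(x)=\Delta(e+x)$. Since $\Delta$ is a bijective isometry on $S(M)$ and $x\mapsto e+x$ is an isometric bijection onto $F^M_e$, the map $\Phi$ is a surjective isometry from $\mathcal{B}_{M_0(e)}$ onto the convex set $\Delta(F^M_e)$. The strong Mankiewicz property applied to the JBW$^*$-triple $M_0(e)$ then forces $\Phi$ to be affine, which is exactly the assertion that the restriction of $\Delta$ to $F^M_e$ is affine.

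Finally, setting $T_e(x):=\Phi(x)-\Phi(0)=\Delta(e+x)-\Delta(e)$ gives an affine map on $\mathcal{B}_{M_0(e)}$ with $T_e(0)=0$, so that $\Delta(e+x)=T_e(x)+\Delta(e)$ on $F^M_e$. To promote $T_e$ to a real linear isometry on all of $M_0(e)$, I would extend by positive homogeneity, $T_e(x):=n\,\bigl(\Phi(x/n)-\Phi(0)\bigr)$ for any $n\ge\|x\|$ (well defined because $\Phi$ is affine on the ball); additivity follows from the midpoint identity $\Phi\bigl(\tfrac{x/n+y/n}{2}\bigr)=\tfrac12\bigl(\Phi(x/n)+\Phi(y/n)\bigr)$, and $T_e(-x)=-T_e(x)$ is then immediate from additivity and $T_e(0)=0$ (equivalently, from the central symmetry of $\mathcal{B}_{M_0(e)}$). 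Continuity of the isometry upgrades $\mathbb{Q}$-homogeneity to $\mathbb{R}$-homogeneity, so $T_e$ is real linear; the identity $\|T_e(x)\|=\|x\|$ passes from the ball to all of $M_0(e)$ by homogeneity, and the image of $T_e$ is a norm closed subspace of $Y$. The only genuine subtlety is the convexity step: one must know that the face is sent to a \emph{convex} set, and this is precisely what the intersection-face description in Lemma~\ref{l intersection faces MoriOzawa L8} supplies; everything else is the Mankiewicz machinery together with a routine homogenization.
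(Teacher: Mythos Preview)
Your proposal is correct and follows exactly the approach the paper indicates: the corollary is quoted from \cite[Corollary 4.1]{BeCuFerPe2018} without proof, the preceding sentence pointing only to the strong Mankiewicz property of the closed unit ball of a JBW$^*$-triple \cite[Corollary 2.2]{BeCuFerPe2018}. Your argument supplies precisely the missing details---identifying $F^M_e$ with the ball of the JBW$^*$-triple $M_0(e)$, invoking Lemma~\ref{l intersection faces MoriOzawa L8} for convexity of the image, and then applying Mankiewicz---so there is nothing to add.
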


If $E$ is a reflexive JB$^*$-triple, then all minimal tripotents in $E^{**}$ are actually in $E$, and hence all maximal proper norm closed faces of $\mathcal{B}_E$ are of the form $F_e^E$, where $e$ is a minimal tripotent in $E$. Since, by Zorn's lemma every convex subset in $S(E)$ is contained in a maximal convex subset, the next result is a consequence of the previous Corollary \ref{c closed faces associated with tripotents down} and these comments (compare also \cite[Lemma 3.2]{Tan2016preprint}).

\begin{corollary}\label{c affine on convex subsets of the sphere in reflexive}  Suppose $\Delta:S(E)\to S(Y)$ is a surjective isometry,  where $Y$ is a real Banach space and $E$ is a reflexive JB$^*$-triple.
  Then $\Delta|_{F}$ is affine for each convex subset $F\subset S(E)$.
\end{corollary}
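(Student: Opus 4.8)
The plan is to reduce Corollary~\ref{c affine on convex subsets of the sphere in reflexive} to the already-established Corollary~\ref{c closed faces associated with tripotents down} by a maximality argument. First I would recall, as the text preceding the statement already notes, that since $E$ is a reflexive JB$^*$-triple it coincides with its own bidual in the relevant sense, so all minimal tripotents of $E^{**}$ lie in $E$; consequently every maximal proper norm closed face of $\mathcal{B}_E$ has the form $F_e^E = e + \mathcal{B}_{E_0(e)}$ for a minimal tripotent $e\in E$. The strategy is then: given an arbitrary convex subset $F\subset S(E)$, enlarge it to a maximal convex subset of $S(E)$ and show that such a maximal convex subset is exactly a face of the form $F_e^E$, on which $\Delta$ is already known to be affine.

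The key steps, in order, are as follows. First, let $F\subset S(E)$ be convex. By Zorn's lemma $F$ is contained in a maximal convex subset $\widehat{F}$ of $S(E)$. Second, I would argue that a maximal convex subset of the unit sphere $S(E)$ is automatically a maximal proper norm closed face of $\mathcal{B}_E$: its norm closure is still convex and still contained in $S(E)$ (since $S(E)$ is closed), so by maximality $\widehat{F}$ is closed; moreover a maximal convex subset of the sphere of any Banach space is a face of $\mathcal{B}_E$, because if a convex combination of two ball-elements lands in the sphere then both endpoints must already lie in the sphere, and maximality forces the face to be proper and maximal among proper faces. Third, invoking the reflexivity reduction above, $\widehat{F} = F_e^E = e + \mathcal{B}_{E_0(e)}$ for some minimal (in particular non-zero) tripotent $e\in E$. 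Fourth, Corollary~\ref{c closed faces associated with tripotents down}, applied with $M=E$ (a reflexive JB$^*$-triple is in particular a JBW$^*$-triple, so the corollary applies), tells us that $\Delta|_{F_e^E}$ is affine. Finally, since $F\subset \widehat{F}=F_e^E$, the restriction $\Delta|_F$ is the restriction of an affine map and is therefore affine, which is the desired conclusion.

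I expect the main obstacle to be the second step, namely verifying cleanly that a maximal convex subset of $S(E)$ is precisely a maximal proper norm closed face of $\mathcal{B}_E$. The face property and the closedness both follow from standard convexity facts, but one must be careful that ``maximal convex subset of the sphere'' and ``maximal proper norm closed face of the ball'' genuinely coincide rather than merely one being contained in the other; the matching uses that every proper face sits inside the sphere (as recorded in the discussion of semi-exposed faces in Section~\ref{sec:2}) together with the maximality on both sides. Once this identification is in place, the remaining steps are purely formal. This is presumably why the statement is phrased as a corollary and its proof can be left essentially to a reference to Corollary~\ref{c closed faces associated with tripotents down} and the cited \cite[Lemma 3.2]{Tan2016preprint}.
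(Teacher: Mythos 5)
Your argument is exactly the one the paper intends: reduce to a maximal convex subset via Zorn's lemma, identify it with a maximal proper norm closed face $F_e^E$ ($e$ a minimal tripotent of $E$, using reflexivity), and apply Corollary~\ref{c closed faces associated with tripotents down}; the paper gives precisely this sketch in the paragraph preceding the statement and offers no further proof. The only soft spot is your justification that a maximal convex subset of $S(E)$ is a face of $\mathcal{B}_E$ --- the observation that a convex combination landing on the sphere forces both endpoints onto the sphere does not by itself put those endpoints into $\widehat{F}$ --- but you correctly identify this as the delicate point, and it is exactly what the cited \cite[Lemma 3.2]{Tan2016preprint} (via a supporting functional realizing $\widehat{F}$ as $f^{-1}(\{1\})\cap\mathcal{B}_E$) supplies.
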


Our next corollary gathers some interesting consequences. We recall that every JB$^*$-triple having finite rank is reflexive and hence a JBW$^*$-triple (cf. \cite[Proposition 4.5]{BuChu}).

\begin{corollary}\label{c additivity on mutually orthogonal minimal tripotents} Let $\Delta: S(E) \to S(Y)$ be a surjective isometry, where $Y$ is a real Banach space and $E$ is a finite rank JB$^*$-triple with rank at least two. Suppose $e_1$ is a minimal tripotents in $E$. Let $T_{e_1}: E_0(e_1)\to Y$ be the real linear isometry satisfying $\Delta( {e_1}+ x ) = \Delta(e_{1}) + T_{e_1} (x)$ for all $x\in \mathcal{B}_{E_0(e_1)},$ whose existence is given by Corollary \ref{c closed faces associated with tripotents down}. Then the following assertions hold:\begin{enumerate}[$(a)$]\item $\Delta(e_1+e_2) = \Delta(e_1) + \Delta(e_2),$ for every tripotent $e_2\in S(E_0(e_1))$;
\item $T_{e_1} (e_2) = \Delta (e_2),$ for every tripotent $e_2\in S(E_0(e_1))$;
\item Let $\widetilde{\Delta}: S(E) \to S(Y)$ be another surjective isometry. Then $\Delta = \widetilde{\Delta}$ if and only if $\Delta (v) = \widetilde{\Delta} (v)$ for every minimal tripotent $v$ in $E$;
\item Let $\varphi\in E^*$ and $\psi\in Y^*$ be functionals. Then the following statements are equivalent:
\begin{enumerate}[$(d.1)$]\item $\psi\Delta (z) = \Re\varphi(z),$ for all $z\in S(E)$;
\item $\psi\Delta (e) = \Re \varphi(e),$ for every complete tripotent $e\in E$;
\item $\psi\Delta (v) = \Re \varphi(v),$ for every minimal tripotent $v\in E$.
\end{enumerate}
\end{enumerate}
\end{corollary}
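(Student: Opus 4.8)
The plan is to establish the four assertions in sequence, using the affine structure provided by Corollary~\ref{c affine on convex subsets of the sphere in reflexive} (available since $E$ has finite rank, hence is reflexive) together with the basic metric facts already recorded. For part $(a)$, I would start from the observation that if $e_2\in S(E_0(e_1))$ is a tripotent orthogonal to $e_1$, then $e_1+e_2$ is a tripotent with $\|e_1+e_2\|=1$, and the key point is a distance computation: $\|(e_1+e_2)-e_1\|=\|e_2\|=1$ and $\|(e_1+e_2)-e_2\|=\|e_1\|=1$, while $\Delta$ being an isometry forces $\Delta(e_1+e_2)$ to lie at distance $1$ from both $\Delta(e_1)$ and $\Delta(e_2)$. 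Since $e_1+e_2\in F^E_{e_1}=e_1+\mathcal{B}_{E_0(e_1)}$, the affine formula $\Delta(e_1+x)=\Delta(e_1)+T_{e_1}(x)$ from Corollary~\ref{c closed faces associated with tripotents down} applies with $x=e_2$, giving $\Delta(e_1+e_2)=\Delta(e_1)+T_{e_1}(e_2)$; so $(a)$ and $(b)$ are really the same statement once one identifies $T_{e_1}(e_2)$ with $\Delta(e_2)$. The substance is therefore to prove $(b)$: that the linear isometry $T_{e_1}$ restricted to the orthogonal complement sends the tripotent $e_2$ to its image $\Delta(e_2)$.

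For $(b)$, note that $e_2\in S(E_0(e_1))$ and $\Delta(e_2)$ is a point of $S(Y)$; I would show $T_{e_1}(e_2)=\Delta(e_2)$ by pinning down $\Delta(e_2)$ through a face-theoretic characterization. The element $e_2$ is itself a tripotent, and Corollary~\ref{c Tingley antipodal thm for tripotents} gives $\Delta(-e_2)=-\Delta(e_2)$. The cleanest route is to apply the same extreme-point/face machinery symmetrically: the face $F^E_{e_2}=e_2+\mathcal{B}_{E_0(e_2)}$ contains $e_1+e_2$ (since $e_1\perp e_2$ means $e_1\in E_0(e_2)$), so by Corollary~\ref{c closed faces associated with tripotents down} applied to $e_2$ we also have $\Delta(e_1+e_2)=\Delta(e_2)+T_{e_2}(e_1)$. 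Comparing with the expression from $F^E_{e_1}$, both decompositions of $\Delta(e_1+e_2)$ must be consistent, and using the affine/linear structure one extracts $T_{e_1}(e_2)=\Delta(e_2)$. This symmetry argument, exploiting that $e_1+e_2$ lies simultaneously in two faces, is where the orthogonality of the two tripotents does its essential work.

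For $(c)$, the forward implication is trivial, so I would prove that agreement on minimal tripotents forces $\Delta=\widetilde\Delta$. Since every complete tripotent in a finite-rank JB$^*$-triple is a finite orthogonal sum of minimal tripotents, part $(a)$ (the additivity) upgrades agreement on minimal tripotents to agreement on all complete tripotents; and since by Corollary~\ref{c affine on convex subsets of the sphere in reflexive} both maps are affine on each maximal proper face $F^E_v$, and such faces are determined by minimal tripotents via \eqref{eq maximal proper faces}, agreement on the extreme points of these faces propagates to all of $S(E)$. The main care here is to verify that the minimal tripotents, together with the affine structure, genuinely \emph{fill out} the sphere: every norm-one element lies in some maximal proper face $F^E_v$, and $\Delta|_{F^E_v}$ is an affine map determined by its values at the extreme points of $F^E_v$, which by $(b)$ are in turn controlled by the minimal tripotents of $E_2(v)^\perp$.

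Finally, for $(d)$, the implications $(d.1)\Rightarrow(d.2)\Rightarrow(d.3)$ are immediate by restriction. The content is $(d.3)\Rightarrow(d.1)$: I would define $\Phi(z):=\psi\Delta(z)-\Re\varphi(z)$ on $S(E)$ and show it vanishes identically given that it vanishes on minimal tripotents. Both $\psi\Delta$ and $\Re\varphi$ are affine on each maximal proper face (the former by affinity of $\Delta$ there, the latter because $\Re\varphi$ is affine), so their difference is affine on each $F^E_v$; since every point of $S(E)$ lies in such a face and the extreme points of these faces are built from minimal tripotents on which $\Phi=0$, affinity forces $\Phi\equiv 0$. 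I expect the main obstacle to be $(b)$: making the double-face comparison fully rigorous requires carefully tracking how the two linear isometries $T_{e_1}$ and $T_{e_2}$ interact on the overlap $e_1+e_2$, and ensuring that the affine formulas coming from the two different faces are compatible rather than merely formally analogous.
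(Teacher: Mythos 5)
Your outlines of parts $(c)$ and $(d)$ match the paper's argument: additivity from $(a)$ upgrades agreement on minimal tripotents to agreement on complete tripotents, and then affinity on each proper closed face (Corollary~\ref{c affine on convex subsets of the sphere in reflexive}) together with the fact that extreme points of such faces are complete tripotents propagates the agreement to all of $S(E)$ (the paper invokes Krein--Milman here, which needs the weak compactness of the faces coming from reflexivity --- worth stating explicitly). The problem is that you have inverted the logical order of $(a)$ and $(b)$ and left the resulting key step unproved. You reduce $(a)$ to $(b)$ and then propose to get $(b)$ by comparing the two decompositions of $\Delta(e_1+e_2)$ coming from the faces $F^E_{e_1}$ and $F^E_{e_2}$. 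But that comparison yields only the single identity $\Delta(e_1)+T_{e_1}(e_2)=\Delta(e_2)+T_{e_2}(e_1)$, one equation in the two unknowns $T_{e_1}(e_2)$ and $T_{e_2}(e_1)$; it cannot by itself force $T_{e_1}(e_2)=\Delta(e_2)$. The preliminary distance computation ($\Delta(e_1+e_2)$ is at distance $1$ from $\Delta(e_1)$ and $\Delta(e_2)$) and the antipodality $\Delta(-e_2)=-\Delta(e_2)$ that you cite do not close this gap. You yourself flag this as ``the main obstacle'' and leave it unresolved, so as written the proof of $(a)$--$(b)$ is incomplete.

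The missing ingredient is the element $e_1-e_2$, which is a tripotent lying in $F^E_{e_1}$ (while $e_2-e_1\in F^E_{e_2}$), together with Corollary~\ref{c Tingley antipodal thm for tripotents} applied to the tripotent $e_1-e_2$, i.e.\ $\Delta(e_2-e_1)=-\Delta(e_1-e_2)$. The paper proves $(a)$ directly: writing $e_1=\frac12(e_1+e_2)+\frac12(e_1-e_2)$ and using affinity of $\Delta$ on $F^E_{e_1}$ gives $\Delta(e_1)=\frac12\Delta(e_1+e_2)+\frac12\Delta(e_1-e_2)$; writing $e_2=\frac12(e_1+e_2)+\frac12(e_2-e_1)$ and using affinity on $F^E_{e_2}$ plus the antipodality above gives $\Delta(e_2)=\frac12\Delta(e_1+e_2)-\frac12\Delta(e_1-e_2)$; adding yields $(a)$, and $(b)$ then follows immediately from the formula $\Delta(e_1+e_2)=\Delta(e_1)+T_{e_1}(e_2)$. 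Your two-face strategy can in fact be salvaged along the same lines --- adding the relations $\Delta(e_1-e_2)=\Delta(e_1)-T_{e_1}(e_2)$ and $\Delta(e_2-e_1)=\Delta(e_2)-T_{e_2}(e_1)$ to the two decompositions of $\Delta(e_1+e_2)$ and using $\Delta(e_2-e_1)=-\Delta(e_1-e_2)$ determines both unknowns --- but this extra input must be supplied; without it the argument does not go through.
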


\begin{proof}$(a)$  By Corollary \ref{c closed faces associated with tripotents down} $\Delta|_{F_{e_j}^E}$ is an affine function for every $j=1,2$. So, by Corollary \ref{c antipodal for extreme points} we have
$$ \Delta(e_1)=  \Delta\left( \frac12 (e_1+e_2) + \frac12 (e_1-e_2)\right) = \frac12 \Delta(e_1+e_2) + \frac12 \Delta(e_1-e_2),$$ and
$$\begin{aligned}[t]
\Delta(e_2) &=  \Delta\left( \frac12 (e_1+e_2) + \frac12 (-e_1+e_2)\right)= \frac12 \Delta(e_1+e_2) + \frac12 \Delta(-e_1+e_2)\\
&= \frac12 \Delta(e_1+e_2) - \frac12 \Delta(e_1-e_2),
\end{aligned}$$ where in the last equality we applied Corollary \ref{c Tingley antipodal thm for tripotents}. Both identities together give $\Delta(e_1) + \Delta(e_2) = \Delta(e_1+e_2)$ as desired.\smallskip

$(b)$ The element $e_1 + e_2$ lies in $F_{e_1}^E$. We deduce from Corollary \ref{c closed faces associated with tripotents down} and $(a)$ that $$\Delta(e_1) + T_{e_1} (e_2) = \Delta(e_1+ e_2) = \Delta(e_1) + \Delta(e_2),$$ and hence $T_{e_1} (e_2) = \Delta(e_2)$.\smallskip

$(c)$ The ``only if'' implication is clear. Let us assume that $\Delta (v) = \widetilde{\Delta} (v)$ for every minimal tripotent $v$ in $E$. By applying that $E$ has finite rank we deduce that any complete tripotent in $E$ is the sum of a finite collection of pairwise orthogonal minimal tripotents. Hence by $(a)$ we get that $\Delta (v) = \widetilde{\Delta} (v)$ for every complete tripotent $v$ in $E$. It follows that $\Delta$ and $\widetilde{\Delta}$ coincide on each proper closed face. Indeed, let $F\subset S(E)$ be a proper closed face. Then $\Delta$ and $\widetilde{\Delta}$ are two continuous affine mappings on $F$ (by Corollary~\ref{c affine on convex subsets of the sphere in reflexive}) which coincide on extreme points of $F$ (note that extreme points of $F$ are also extreme points of $\mathcal{B}_E$, thus complete tripotents), so they coincide on $F$ by the  Krein-Milman theorem (note that $F$ is weakly compact due to the reflexivity of $E$).
Since any element of the sphere is contained in a proper closed face by the Hahn-Banach theorem, we see that $\Delta$ and $\widetilde{\Delta}$ coincide.\smallskip

$(d)$ The implication $(d.1) \Rightarrow (d.3)$ is clear. The implication $(d.3)\Rightarrow(d.2)$ follows from $(a)$ using the fact that any complete tripotent is  the sum of a  finite collection of pairwise orthogonal minimal tripotents.\smallskip

$(d.2)\Rightarrow(d.1)$ We proceed similarly as in $(c)$. The right-hand side is a continuous affine mapping on $E$, the left-hand side is a continuous mapping which is affine at each proper closed face of $\mathcal{B}_E$. Thus, having the equality on extreme points, we get equality at each proper closed face, hence on $S(E)$.

\end{proof}

\section{Structure of spin factors and applications to finite-rank Cartan factors}\label{sec:spin factors}

We recall the following result from \cite{MoriOza2018}.

\begin{lemma}\label{l MO l21}\cite[Lemma 21]{MoriOza2018} Let $1$ denote the unit element of the C$^*$-algebra $A=M_2(\mathbb{C})$ and let $\operatorname{tr}$ denote the normalized trace. The real linear subspace $$\mathcal{H} :=\{ x\in A_{sa} : \operatorname{tr} (x) = 0\}$$ is a real Hilbert space with $A_{sa} = \mathbb{R} 1\oplus^{1} \mathcal{H}$, and if an element $x\in S(A)$ satisfies $\| 1\pm x\|=2,$ then $x\in \mathcal{H}$.
\end{lemma}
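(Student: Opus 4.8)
The plan is to dispatch the three assertions separately: the Hilbert-space structure of $\mathcal{H}$ and the $\oplus^{1}$-decomposition are elementary linear algebra, and the real content is the last claim about elements at distance $2$ from $\pm1$.

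First I would record the basic spectral picture in $\mathcal{H}$. Each $h\in\mathcal{H}$ is a traceless self-adjoint $2\times2$ matrix, so its eigenvalues are $\pm s$ with $s=\|h\|\ge0$, and hence $h^{2}=\|h\|^{2}1$. In particular $\operatorname{tr}(h^{2})=\|h\|^{2}$, so the symmetric bilinear form $\langle h_{1},h_{2}\rangle:=\operatorname{tr}(h_{1}\circ h_{2})$, with $h_{1}\circ h_{2}=\tfrac12(h_{1}h_{2}+h_{2}h_{1})$, is a genuine inner product on $\mathcal{H}$ whose induced norm is exactly the operator norm; since $\dim_{\mathbb{R}}\mathcal{H}=3<\infty$ it is complete, giving the Hilbert-space assertion. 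For the decomposition, every $y\in A_{sa}$ is written uniquely as $y=\operatorname{tr}(y)1+(y-\operatorname{tr}(y)1)$ with the second summand in $\mathcal{H}$; and if $y=t1+h$ with $s=\|h\|$, then $y$ has eigenvalues $t\pm s$, so $\|y\|=\max\{|t+s|,|t-s|\}=|t|+s=|t|+\|h\|$. This is precisely $A_{sa}=\mathbb{R}1\oplus^{1}\mathcal{H}$.

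Next I would attack the last claim by extracting spectral information about the self-adjoint part of $x$. Since $A$ is finite-dimensional, choose a norm-one functional $\varphi$ with $\varphi(1+x)=\|1+x\|=2$. From $|\varphi(1)|\le1$ and $|\varphi(x)|\le1$ together with $\varphi(1)+\varphi(x)=2$ one forces $\varphi(1)=1$, so $\varphi$ is a state, and $\varphi(x)=1$. Writing the Cartesian decomposition $x=h+ik$ with $h,k\in A_{sa}$ and using $\varphi(h),\varphi(k)\in\mathbb{R}$, the identity $\varphi(x)=1$ gives $\varphi(h)=1$. As $\|h\|\le\|x\|=1$, this forces $1\in\sigma(h)$ (otherwise $1-h\ge\varepsilon1$ for some $\varepsilon>0$, contradicting $\varphi(1-h)=0$). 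Running the same argument on $\|1-x\|=2$ produces a state $\psi$ with $\psi(h)=-1$, so $-1\in\sigma(h)$. Because $h$ is a self-adjoint $2\times2$ matrix, its two eigenvalues are then exactly $1$ and $-1$; hence $\operatorname{tr}(h)=0$ and $h^{2}=1$.

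The hard part is to show that $x$ is actually self-adjoint, i.e. $k=0$: the two states only tell us $\varphi(k)=\psi(k)=0$, which is far from enough. Here I would exploit that $\|x\|=1$ forces both $x^{*}x\le1$ and $xx^{*}\le1$; adding these two inequalities the cross terms cancel, leaving $x^{*}x+xx^{*}=2(h^{2}+k^{2})\le2\cdot1$, that is $h^{2}+k^{2}\le1$. Applying the faithful, positive normalized trace and inserting $h^{2}=1$ gives $\operatorname{tr}(k^{2})\le1-\operatorname{tr}(h^{2})=0$; since $k^{2}\ge0$ and $\operatorname{tr}$ is faithful, $k^{2}=0$ and so $k=0$. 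Thus $x=h\in A_{sa}$ with $\operatorname{tr}(x)=0$, i.e. $x\in\mathcal{H}$, as claimed.
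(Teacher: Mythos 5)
Your proof is correct. Note, however, that the paper offers no proof of this lemma at all: it is quoted verbatim from Mori--Ozawa (Lemma 21 of \cite{MoriOza2018}), so there is no internal argument to compare against; what you have written is a self-contained verification of the cited result. The argument is sound at every step: the identity $h^{2}=\|h\|^{2}1$ for traceless self-adjoint $2\times2$ matrices simultaneously yields the Hilbert-space structure of $\mathcal{H}$ (via $\langle h_1,h_2\rangle=\operatorname{tr}(h_1\circ h_2)$) and the $\ell^{1}$-decomposition (via the eigenvalues $t\pm\|h\|$ of $t1+h$); the two norming functionals at $1+x$ and $1-x$ are states because they attain the value $1$ at the unit, they force $\{1,-1\}\subseteq\sigma(h)$ and hence $h^{2}=1$, $\operatorname{tr}(h)=0$; and the inequality $x^{*}x+xx^{*}=2(h^{2}+k^{2})\le 2$ combined with faithfulness of the trace kills the skew part $k$. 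The only point worth making explicit is the standard fact you invoke silently, namely that a norm-one functional on a unital C$^{*}$-algebra taking the value $1$ at the unit is positive; with that reference added the proof is complete.
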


In this section we shall establish a similar conclusion for spin factors and then apply it for general Cartan factors.\smallskip

Along this section $X$ will stand for a fixed spin factor.

Set
$$X^{-} =\{x\in X : \overline{x} =x\}.$$
Then $\|x\|= \|x\|_2 = \sqrt{\langle x|x\rangle}$ for all $x\in X^{-}$, hence $X^-$ is a real Hilbert space.
In particular, $\langle a | b\rangle = \langle b| a\rangle \in \mathbb{R}$ for all $a,b\in X^{-}$.
Further, clearly $X= X^{-} \oplus i X^{-}$, and hence dim$_{\mathbb{C}}(X)=$dim$_{\mathbb{R}}(X^{-})$. \smallskip

It is easy to check that every norm-one element $e\in X^{-}$ is a unitary element in $X$, i.e., it is a tripotent with $X_2(e) = X$. Moreover, given another norm-one element $y\in X^{-}$ with $\langle e| y\rangle =0,$ the elements $e_1= \frac{e+iy}{2}$ and $e_2= \frac{e-iy}{2}$ are two mutually orthogonal minimal tripotents in $X$ with $e = e_1+e_2$. It can be also checked that  $$X_1(e_1) =X_1(e_2)= \{{c+id}: c,d\in X^{-}, \ e,y \perp_2 c,d \}=\{e,y\}^{\perp_2},$$ where  $\perp_2$ is used to denote  orthogonality in the Hilbert space $(X,\langle\cdot|\cdot\rangle)$.
\smallskip

If $X$ is one dimensional, then $X= \mathbb{C}$ has rank one. When $X$ is two-dimensional we find an orthonormal basis $\{x_1,x_2\}$ of $X^-$. The elements $e_1= \frac{x_1+ix_2}{2}$ and $e_2= \frac{x_1-ix_2}{2}$ are mutually orthogonal minimal tripotents in $X$, therefore $X \cong \mathbb{C} e_1 \oplus^{\infty} \mathbb{C} e_2$ is not a factor. For these reasons it is standard to assume that dim$(X)\geq 3$ and we shall do so. \smallskip

It is known that every spin factor (unless the one-dimensional case) has rank $2$ (this easily follows from the above description of tripotents, see also \cite[Table 1 in page 210]{Ka97} or \cite[Table in page 475]{Ka81}).  
\smallskip

Let us recall that any two minimal tripotents in a Cartan factor $C$ are interchanged by a triple automorphism on $C$ (cf. \cite[Proposition 5.8]{Ka97}). The case of spin factors is easy and can be described in a canonical way. It is done in the following lemma which easily follows from the above description of tripotents.\smallskip

\begin{lemma}\label{L:shift in spin}
\begin{enumerate}[$(i)$]
	\item Let $T\in L(X^-)$ be a unitary operator on the real Hilbert space $X^-$ and let $\alpha$ be a complex unit. Then the operator
	$$a+ib\mapsto \alpha (T(a)+iT(b)),\quad a,b\in X^-,$$
	is a triple automophism of the spin factor $X$ which is simultaneously a unitary operator on the Hilbert space $(X,\langle\cdot|\cdot\rangle)$. Actually, every triple automorphism on $X$ is of this form;
	\item Any two unitary tripotents in $X$ are interchanged by a triple automorphism of the form from $(i)$;
	\item Any two minimal tripotents in $X$ are interchanged by a triple automorphism of the form from $(i)$ {\rm(}we can obtain even $\alpha=1${\rm)}.
\end{enumerate}
\end{lemma}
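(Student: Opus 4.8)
The plan is to handle the three parts separately; parts $(ii)$ and $(iii)$ will be short consequences of the description of tripotents recorded above, whereas the second assertion of $(i)$ — that \emph{every} triple automorphism is of the stated form — carries the real content. For the direct assertion of $(i)$ I would write $\widetilde T(a+ib)=T(a)+iT(b)$ for the complex-linear extension of $T$ to $X=X^-\oplus iX^-$, so that the map in question is $\Phi=\alpha\widetilde T$. Since $T$ is a real unitary of $X^-$, the operator $\widetilde T$ is a unitary of the Hilbert space $(X,\langle\cdot|\cdot\rangle)$ which commutes with the conjugation, i.e. $\overline{\widetilde T x}=\widetilde T\,\overline x$. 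Using $|\alpha|=1$ one computes directly from \eqref{eq spin product} that $\langle\Phi x|\Phi y\rangle=\langle x|y\rangle$, $\langle\Phi z|\Phi y\rangle=\langle z|y\rangle$, and $\langle\Phi x|\overline{\Phi z}\rangle\,\overline{\Phi y}=\alpha\,\langle x|\overline z\rangle\,\widetilde T\overline y$ (because $\langle\Phi x|\overline{\Phi z}\rangle=\alpha^2\langle x|\overline z\rangle$ while $\overline{\Phi y}=\overline\alpha\,\widetilde T\overline y$). Hence $\{\Phi x,\Phi y,\Phi z\}=\alpha\widetilde T\{x,y,z\}=\Phi\{x,y,z\}$, and as $\Phi$ is a bijective Hilbert unitary this settles the first half.

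The converse is where I expect the main difficulty. Let $\Psi$ be an arbitrary triple automorphism (in particular complex-linear). The crux is that $\Psi$ must preserve $\langle\cdot|\cdot\rangle$, although a priori it preserves only the triple product (and the spin norm, which is \emph{not} the Hilbert norm). I would obtain this from the spectral decomposition available in the rank-$2$ factor $X$: write $x=s_1v_1+s_2v_2$ with $s_1\ge s_2\ge 0$ and $v_1\perp v_2$ minimal tripotents. The description above shows that orthogonal minimal tripotents are orthogonal in the Hilbert sense ($v_1\perp_2 v_2$) and satisfy $\langle v_i|v_i\rangle=\tfrac12$, so $\langle x|x\rangle=\tfrac12(s_1^2+s_2^2)$. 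Since $\Psi v_1,\Psi v_2$ are again orthogonal minimal tripotents, the same computation applied to $\Psi x=s_1\Psi v_1+s_2\Psi v_2$ gives $\langle\Psi x|\Psi x\rangle=\tfrac12(s_1^2+s_2^2)=\langle x|x\rangle$; complex polarization then yields $\langle\Psi x|\Psi y\rangle=\langle x|y\rangle$ for all $x,y$, so $\Psi$ is a Hilbert unitary.

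Knowing this, I would isolate the scalar. Comparing $\Psi\{x,y,z\}$ with $\{\Psi x,\Psi y,\Psi z\}$ and cancelling the two terms built only from $\langle\cdot|\cdot\rangle$ leaves
\[
\langle\Psi x|\overline{\Psi z}\rangle\,\overline{\Psi y}=\langle x|\overline z\rangle\,\Psi\overline y \qquad\text{for all } x,y,z.
\]
Taking $x\neq 0$ and $z=\overline x$ (so $\langle x|\overline z\rangle=\langle x|x\rangle\neq 0$) forces $\overline{\Psi y}=\lambda\,\Psi\overline y$ for all $y$, with $\lambda$ a nonzero constant, and applying the conjugation twice gives $|\lambda|=1$. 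Choosing $\alpha\in\mathbb{T}$ with $\alpha^2=\overline\lambda$ and putting $\widetilde T:=\overline\alpha\,\Psi$, a one-line check shows $\widetilde T$ commutes with the conjugation; being also a complex-linear Hilbert unitary, $\widetilde T$ preserves $X^-$ and is the complexification of the real unitary $T:=\widetilde T|_{X^-}\in L(X^-)$, so $\Psi=\alpha\widetilde T$ has the required form.

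Finally, $(ii)$ and $(iii)$ follow from transitivity of the orthogonal group $O(X^-)$. From the tripotent equation one sees that the unitary tripotents of $X$ are exactly the elements $\alpha e$ with $\alpha\in\mathbb{T}$ and $e\in X^-$, $\langle e|e\rangle=1$; given $u_j=\alpha_je_j$, I would pick a real unitary $T$ with $Te_1=e_2$ and set $\alpha=\alpha_2\overline{\alpha_1}$, so that $\alpha\widetilde T(u_1)=u_2$, proving $(ii)$. Likewise the minimal tripotents are exactly $\tfrac12(a+ib)$ with $a,b\in X^-$ orthonormal; given two of them, $\tfrac12(a+ib)$ and $\tfrac12(a'+ib')$, I would choose a real unitary $T$ carrying the orthonormal pair $(a,b)$ to $(a',b')$, which exists because $\dim X^-\ge 3$ and $O(X^-)$ acts transitively on orthonormal $2$-frames; then $\widetilde T$ carries the first tripotent to the second with no scalar needed, which is the refinement $\alpha=1$ recorded in $(iii)$.
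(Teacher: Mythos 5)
Your proof is correct, but it takes a genuinely different route from the paper: the paper disposes of the whole lemma in one line, citing the theorem of Herves and Isidro \cite[Theorem in page 196]{HervIs92} for statement $(i)$ and declaring $(ii)$ and $(iii)$ to be consequences, whereas you reprove the classification of triple automorphisms of a spin factor from scratch. Your argument for the converse half of $(i)$ is the substantive addition: deducing that an abstract triple automorphism $\Psi$ is a Hilbert unitary from the spectral decomposition $x=s_1v_1+s_2v_2$ (using that triple-orthogonal minimal tripotents are $\perp_2$ with $\langle v_i|v_i\rangle=\tfrac12$, so $\langle x|x\rangle=\tfrac12(s_1^2+s_2^2)$ is a triple invariant) and then extracting the scalar $\alpha$ from the relation $\langle\Psi x|\overline{\Psi z}\rangle\,\overline{\Psi y}=\langle x|\overline z\rangle\,\Psi\overline y$ is a clean and complete substitute for the citation; one could only ask you to make explicit why triple-orthogonality of minimal tripotents forces Hilbert-orthogonality (it follows from evaluating $L(v_1,v_2)v_2=0$ against $v_2$, or from the fact that the triple-orthogonal complement of $v_1$ is $\mathbb{C}\overline{v_1}$), since the paper's "description above" only records the special conjugate pair $e_1=\frac{e+iy}{2}$, $e_2=\frac{e-iy}{2}$. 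Your derivations of $(ii)$ and $(iii)$ from the explicit form of unitary and minimal tripotents coincide with what the paper intends by "consequences of the first one"; the remark that $\dim X^-\ge 3$ is needed for transitivity on orthonormal $2$-frames is superfluous (two frames in the same space always have isometric orthogonal complements) but harmless. What the self-contained route buys is independence from \cite{HervIs92}; what the paper's route buys is brevity.
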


\begin{proof} Statement $(i)$ is in \cite[Theorem in page 196]{HervIs92}. The other statements are consequences of the first one.
\end{proof}

We continue by an extension of the first statement of Lemma~\ref{l MO l21} to spin factors.

\begin{proposition}\label{p hermitian part of the Peirce-2 subspace of a rank-2 trip in a spin} Let $e$ be a unitary (i.e. rank-2) tripotent in a spin factor $X$. Then
$$X_2(e)_{sa} = \mathbb{R} e \oplus^{1} \mathcal{H}_e,$$
where
$$\mathcal{H}_e=\{e\}^{\perp_2}_{X_2(e)_{sa}}$$
 is a real Hilbert space contained in $X_2(e)_{sa}$.
  Furthermore, on $\mathcal{H}_e$ the norms $\|\cdot\|$ and $\|\cdot\|_2$ coincide.
\end{proposition}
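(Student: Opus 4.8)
The plan is to reduce the statement to a canonical unitary tripotent lying in $X^-$ and then to compute the Peirce self-adjoint part and the spin norm explicitly.

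First I would observe that it suffices to prove the statement for one fixed unitary tripotent. Fix a norm-one element $e_0\in X^-$, which is a unitary tripotent by the discussion preceding the proposition. Given an arbitrary unitary tripotent $e$, Lemma~\ref{L:shift in spin}$(ii)$ provides a triple automorphism $\Phi$ of $X$ with $\Phi(e_0)=e$ that is simultaneously a unitary operator on the Hilbert space $(X,\langle\cdot|\cdot\rangle)$. Being a triple automorphism, $\Phi$ preserves the norm $\|\cdot\|$, and being a Hilbert-space unitary it preserves $\|\cdot\|_2$; moreover it intertwines the involutions, since $\Phi(\{e_0,x,e_0\})=\{\Phi(e_0),\Phi(x),\Phi(e_0)\}=\{e,\Phi(x),e\}$, so that $\Phi(X_2(e_0)_{sa})=X_2(e)_{sa}$ and $\Phi(\mathbb{R}e_0)=\mathbb{R}e$. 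By unitarity it also sends the $\perp_2$-complement of $e_0$ inside $X_2(e_0)_{sa}$ onto that of $e$, i.e.\ $\Phi(\mathcal{H}_{e_0})=\mathcal{H}_e$. Hence the full statement for $e$ follows by transporting the statement for $e_0$ through $\Phi$, and I may assume $e=e_0\in X^-$ with $\|e\|_2=1$.

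Next I would compute $X_2(e)_{sa}$ directly. Since $\overline{e}=e$ and $\langle e|e\rangle=1$, formula~\eqref{eq spin product} gives the involution
\[ x^{*_e}=\{e,x,e\}=2\langle e|x\rangle\, e-\overline{x},\qquad x\in X. \]
Writing $x=a+ib$ with $a,b\in X^-$ and comparing the components in $X^-$ and in $iX^-$, the self-adjointness equation $x^{*_e}=x$ becomes $a\in\mathbb{R}e$ together with $\langle e|b\rangle=0$. Thus $X_2(e)_{sa}=\mathbb{R}e\oplus i\big(\{e\}^{\perp_2}\cap X^-\big)$, and since $\langle te+ib|e\rangle=t$ for such elements, the $\perp_2$-complement of $e$ inside $X_2(e)_{sa}$ is exactly $\mathcal{H}_e=\{ib:b\in X^-,\ b\perp_2 e\}$. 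This is a closed real subspace of $X$ on which $\langle\cdot|\cdot\rangle$ is real-valued, hence a real Hilbert space.

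Finally I would evaluate the spin norm on a general $x=te+ib\in X_2(e)_{sa}$ (with $t\in\mathbb{R}$, $b\in X^-$, $b\perp_2 e$). A short computation gives $\langle x|x\rangle=t^2+\|b\|_2^2$ and $\langle x|\overline{x}\rangle=t^2-\|b\|_2^2$, so that by~\eqref{eq spin norm}
\[ \|x\|^2=(t^2+\|b\|_2^2)+\sqrt{(t^2+\|b\|_2^2)^2-(t^2-\|b\|_2^2)^2}=(|t|+\|b\|_2)^2. \]
Therefore $\|te+ib\|=|t|+\|b\|_2=\|te\|+\|ib\|$, which is precisely the assertion $X_2(e)_{sa}=\mathbb{R}e\oplus^{1}\mathcal{H}_e$, and taking $t=0$ gives $\|ib\|=\|b\|_2=\|ib\|_2$, i.e.\ $\|\cdot\|$ and $\|\cdot\|_2$ coincide on $\mathcal{H}_e$. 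Transporting these conclusions back through $\Phi$ finishes the proof. I expect the only genuinely delicate point to be the reduction step, namely verifying that a triple automorphism, being simultaneously a Hilbert-space unitary, transports all the relevant data ($X_2(\cdot)_{sa}$, the line $\mathbb{R}e$, the subspace $\mathcal{H}$, and both norms) correctly; the remaining two steps are routine sign-tracking in the spin inner product and an algebraic simplification of the norm formula.
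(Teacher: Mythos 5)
Your proposal is correct and follows essentially the same route as the paper: reduce to a single canonical unitary tripotent via Lemma~\ref{L:shift in spin}$(ii)$, identify the self-adjoint part by solving $\{e,x,e\}=x$ coordinatewise in $X^-\oplus iX^-$, and then evaluate the spin norm formula \eqref{eq spin norm}. The only (immaterial) difference is that the paper normalizes to the tripotent $ie$ with $e\in X^-$, obtaining $X_2(ie)_{sa}=i\mathbb{R}e\oplus^1\{e\}^{\perp_2}_{X^-}$, whereas you normalize to $e\in X^-$ itself and land in the mirror-image decomposition $\mathbb{R}e\oplus^1 i\{e\}^{\perp_2}_{X^-}$.
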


\begin{proof} By Lemma~\ref{L:shift in spin}$(ii)$ it is enough to prove the statement for one suitably chosen unitary tripotent. So, take the tripotent $ie$ where $e\in X^-$ is a norm-one element. Let us describe  the hermitian part of $X_2(i e)$.
Suppose $x\in X_2(i e)_{sa}$, that is $$x=\{i e, x, i e\} = - 2\langle e| x\rangle e + \overline{x},$$ and consequently, $$- \langle e| x\rangle e= \frac{x-\overline{x}}{2}= - \overline{\frac{x-\overline{x}}{2}} =  \overline{\langle e| x\rangle} e.$$ Therefore $\langle e| x\rangle= i t$ with $t\in \mathbb{R}$ and $x = a -i t e$ for some $a\in X^-$. Since $\langle e | x\rangle = \langle e| a\rangle +i t$, it follows from the above that $$a -i t e = x= - 2\langle e| x\rangle e + \overline{x}= -2 \langle e|a\rangle e - 2 i t e +a+ t i e,$$ which proves that $\langle e|a\rangle =0$. Conversely, if $a\in X^-$ and $\langle e|a\rangle=0$, then clearly $a-ite\in X_2(ie)_{sa}$. Thus
$$X_2(i e)_{sa} = \mathbb{R} i e \oplus \mathcal{H}_{ie},$$
where
$$\mathcal{H}_{ie}=\{e\}^{\perp_2}_{X^-}=\{ie\}^{\perp_2}_{X_2(i e)_{sa}}$$
is a real Hilbert space and $\|z\|= \|z\|_2$ for all $z\in \mathcal{H}_{ie}$. It remains to compute the norm. Accordingly to the formula of the spin norm in \eqref{eq spin norm}, for each $a+i t e$ with $t\in \mathbb{R}$, $a\in \{e\}^{\perp_2}_{X^-}$, we have $$\|a+i t e\|^2 = \|a+i t e\|_2^2 +\sqrt{\|a+i t e\|_2^4 - |\langle a+i t e| \overline{a+i t e}\rangle |^2} = \left( \|a\|_2 + |t|\right)^2.$$ We have shown that \begin{equation}\label{eq self-adjoint part Peirce2 unitary spin} X_2(i e)_{sa} =  i \mathbb{R} e\oplus^{1}\{e\}^{\perp_2}_{X^-}
\end{equation}
and the proof is completed. Observe that, unlike in Lemma~\ref{l MO l21}, the space $\mathcal{H}_e$ may be infinite-dimensional.
\end{proof}

Now let us focus on extending the second statement of Lemma~\ref{l MO l21}. It is done in the assertion $(ii)$ of the following lemma.

\begin{lemma}\label{l distance two from the sphere to a rank2 element in spin} Let $X$ be a spin factor with $\dim(X)\ge 3$. Assume that $e\in X$ is a unitary tripotent. Then the following assertions are true.
\begin{enumerate}[$(i)$]
	\item  Denote by $S_2(\mathbb{C})$  the space of symmetric $2\times 2$ complex matrices considered as a JB$^*$-subalgebra of $M_2(\mathbb{C})$. Then for each $z\in X$ there is a mapping $\iota:S_2(\mathbb{C})\to X$ with the following properties:
	\begin{enumerate}[$(a)$]
	\item $\iota$ is an {\rm(}isometric{\rm)} unital Jordan $*$-monomorphism of $S_2(\mathbb{C})$ into $X=X_2(e)$ {\rm(}in particular, $\iota(1)=e${\rm)};
	\item $\iota$ is an isometry if $S_2(\mathbb{C})$ is equipped with the normalized Hilbert-Schmidt norm and $X$ is equipped with the hilbertian norm $\|\cdot\|_2$;
	\item $M=\iota(S_2(\mathbb{C}))$ contains $z$ {\rm(}and also $e=\iota(1)${\rm)}.
\end{enumerate}
	\item If, moreover, $z\in S(C)$ and $\|e\pm z\|=2$, then $z\in \mathcal{H}_e\cap M,$ where $M=\iota(S_2(\mathbb{C}))$ is the subtriple given in the previous item for $z$, and, moreover, $z$ is a unitary tripotent in $X$.
\end{enumerate}
\end{lemma}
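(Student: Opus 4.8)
The plan is to realize both $e$ and the prescribed $z$ inside a concrete three-dimensional spin subfactor of $X$, to identify that subfactor with $S_2(\mathbb{C})$, and then to import Lemma~\ref{l MO l21} through this identification. Since every triple automorphism of $X$ preserves the triple product, the norm, the inner product $\langle\cdot|\cdot\rangle$ and the conjugation (Lemma~\ref{L:shift in spin}$(i)$), and since any two unitary tripotents are interchanged by such an automorphism (Lemma~\ref{L:shift in spin}$(ii)$), I would first reduce to the case $e\in X^-$ with $\langle e|e\rangle=1$; the general case then follows by conjugating the map $\iota$ constructed below with the relevant automorphism.

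For $(i)$, write $z=p+iq$ with $p,q\in X^-$ and choose a three-dimensional real subspace $V\subseteq X^-$ containing $e,p,q$ (this is possible because $\dim_{\mathbb{R}}X^-=\dim_{\mathbb{C}}X\ge 3$). The complex subspace $M:=V\oplus iV$ is closed under the conjugation and, by the explicit product \eqref{eq spin product}, under the triple product, so it is a subtriple; as its norm \eqref{eq spin norm} depends only on $\langle x|x\rangle$ and $\langle x|\overline{x}\rangle$, it is intrinsically a three-dimensional spin factor, hence isometrically triple-isomorphic to $S_2(\mathbb{C})$. I would then build $\iota$ by hand: fix an orthonormal basis $\{e,v_2,v_3\}$ of $V$ and send a normalized Hilbert--Schmidt orthonormal basis of the self-adjoint (equivalently, conjugation-fixed) part of $S_2(\mathbb{C})$ whose first vector is the unit $1$ onto $e,v_2,v_3$, extending complex-linearly. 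By construction $\iota(1)=e$, the map $\iota$ intertwines the two conjugations, and it is a Hilbert-space isometry onto $M$; consequently it preserves the spin product and is an injective, hence isometric, unital Jordan $*$-monomorphism into $X_2(e)=X$, it is a normalized Hilbert--Schmidt/$\|\cdot\|_2$ isometry, and $z=p+iq\in V\oplus iV=M$. This gives $(a)$, $(b)$ and $(c)$.

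For $(ii)$, put $w:=\iota^{-1}(z)\in S_2(\mathbb{C})\subseteq M_2(\mathbb{C})$. Since $\iota$ is isometric and $\iota(1)=e$, the hypotheses $z\in S(X)$ and $\|e\pm z\|=2$ translate into $w\in S(M_2(\mathbb{C}))$ and $\|1\pm w\|=2$, so Lemma~\ref{l MO l21} yields $\operatorname{tr}(w)=0$ and $w=w^*$. Being symmetric as well, $w$ lies in the $\|\cdot\|_2$-orthocomplement of $1$ inside the self-adjoint part; as $\iota$ is a $\|\cdot\|_2$-isometry sending $1$ to $e$ and self-adjoints to self-adjoints, Proposition~\ref{p hermitian part of the Peirce-2 subspace of a rank-2 trip in a spin} places $z=\iota(w)$ in $\mathcal{H}_e\cap M$. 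Finally, a trace-zero self-adjoint $2\times2$ matrix of operator norm one has eigenvalues $\pm1$, so $w^2=1$; applying the Jordan $*$-monomorphism $\iota$ gives $z\circ_e z=e$ and $z^{*_e}=z$, i.e.\ $z$ is a symmetry, hence a unitary of the JB$^*$-algebra $X_2(e)=X$ and therefore a unitary tripotent of $X$.

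The conceptual heart, and the step I expect to be the main obstacle, is $(i)$: producing a single map $\iota$ that simultaneously contains $z$ in its range, sends $1$ to $e$, is a triple monomorphism, and is isometric for both the operator norm and the normalized Hilbert--Schmidt norm. The enabling observations are that $V\oplus iV$ is automatically a subtriple and that the three-dimensional spin factor is exactly $S_2(\mathbb{C})$; the remaining delicate points are the reduction to $e\in X^-$ and the matching of the two normalizations, so that the units $1$ and $e$ both have $\|\cdot\|_2$-norm one. Once $(i)$ is arranged correctly, $(ii)$ is a transparent transfer of Lemma~\ref{l MO l21} together with the elementary fact that a norm-one, trace-zero, self-adjoint $2\times2$ matrix is a symmetry.
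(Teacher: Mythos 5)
Your overall route is the paper's: reduce to $e\in X^-$ via Lemma~\ref{L:shift in spin}, pick a three-dimensional subspace of $X^-$ containing $e$ and the real and imaginary parts of $z$, and transport Lemma~\ref{l MO l21} through an explicit embedding of $S_2(\mathbb{C})$. There is, however, one genuine error in the construction of $\iota$: you assert that the self-adjoint part of $S_2(\mathbb{C})$ is ``equivalently'' its conjugation-fixed part as a spin factor. These two real forms are \emph{not} equal; they differ by multiplication by $i$ on the trace-zero component. Indeed, for the spin-factor structure on $S_2(\mathbb{C})$ compatible with the triple product $\J xyz=\frac12(xy^*z+zy^*x)$ and the normalized Hilbert--Schmidt inner product, the conjugation fixes $1$, $\left(\begin{smallmatrix}i&0\\0&-i\end{smallmatrix}\right)$ and $\left(\begin{smallmatrix}0&i\\i&0\end{smallmatrix}\right)$, while it \emph{negates} the hermitian trace-zero matrices. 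If you send the hermitian basis $1,\left(\begin{smallmatrix}1&0\\0&-1\end{smallmatrix}\right),\left(\begin{smallmatrix}0&1\\1&0\end{smallmatrix}\right)$ onto $e,v_2,v_3\in X^-$, the resulting map is not a Jordan homomorphism: $\left(\begin{smallmatrix}1&0\\0&-1\end{smallmatrix}\right)^2=1$ would have to go to $e$, whereas $v_2\circ_e v_2=\J{v_2}e{v_2}=-\langle v_2|\overline{v_2}\rangle e=-e$ for $v_2\in X^-$ with $v_2\perp_2 e$. The map does not intertwine the conjugations either, so your ``consequently it preserves the spin product'' step collapses. The fix is exactly the paper's choice: send the conjugation-fixed orthonormal basis $1,\left(\begin{smallmatrix}i&0\\0&-i\end{smallmatrix}\right),\left(\begin{smallmatrix}0&i\\i&0\end{smallmatrix}\right)$ onto $e,v_2,v_3$; then all of your claims in $(i)$ become correct (and in fact your observation that a complex-linear $\|\cdot\|_2$-isometry intertwining the conjugations automatically preserves the spin product is a slightly cleaner justification than the paper's basis-by-basis verification of the Jordan $*$-homomorphism identities).

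With that correction, part $(ii)$ goes through as you wrote it and coincides with the paper's argument: $w=\iota^{-1}(z)$ lands in $\mathcal{H}$ by Lemma~\ref{l MO l21}, hence $z\in X_2(e)_{sa}$ with $z\perp_2 e$, which by Proposition~\ref{p hermitian part of the Peirce-2 subspace of a rank-2 trip in a spin} means $z\in\mathcal{H}_e$. Just be aware that under the corrected $\iota$ the hermitian trace-zero matrices are sent into $i\{e\}^{\perp_2}_{X^-}=\mathcal{H}_e$, not into $X^-$ itself --- this is consistent with the proposition but not with your phrase ``self-adjoints to self-adjoints $=$ conjugation-fixed to conjugation-fixed''. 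Your closing observation that a norm-one trace-zero hermitian $2\times2$ matrix is a symmetry, so that $z$ is a unitary tripotent, is a correct and pleasantly explicit way to finish the last claim, which the paper leaves to the reader.
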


\begin{proof}
(i) By Lemma~\ref{L:shift in spin}$(ii)$ we can assume that $e\in X^-$. Let $z=a+ib$ with $a,b\in X^-$. Since $\dim(X)\ge3$, we can find an orthonormal system of the form $\{e,c,d\}$ in $X^-$ such that  $a,b\in\mbox{span}_{\mathbb{R}}\{e,c,d\}$. It is now enough to define $\iota$ as the linear extension of the assignment
 $${\begin{pmatrix}
	1&0\\0&1
\end{pmatrix}}\mapsto e,\quad {\begin{pmatrix}
	i&0\\0&-i
\end{pmatrix}}
\mapsto c,\quad {\begin{pmatrix}
	0&i\\i&0
\end{pmatrix}}\mapsto d.$$
Indeed, the three matrices form an orthonormal basis of $S_2(\mathbb{C})$ when equipped with the normalized Hilbert-Schmidt norm, thus $(b)$ is obviously valid. To prove $(a)$ it is enough to observe that $\iota(1)=e$, which is the unit of $X_2(e)$,
$$\begin{gathered}{ \begin{pmatrix}
	i&0\\0&-i
\end{pmatrix}^*=\begin{pmatrix}
	-i&0\\0&i
\end{pmatrix}}\mapsto -c=\J ece=c^{*_e}, \\ {\begin{pmatrix}
	0&i\\i&0
\end{pmatrix}^	*=\begin{pmatrix}
	0&-i\\-i&0
\end{pmatrix}}\mapsto -d=\J ede=d^{*_e}\end{gathered}$$
and
$$ \begin{gathered}
{\begin{pmatrix}
	i&0\\0&-i
\end{pmatrix}^2=\begin{pmatrix}
	-1&0\\0&-1
\end{pmatrix}}\mapsto -e=\J cec=c\circ_e c,\\{ \begin{pmatrix}
	0&i\\i&0
\end{pmatrix}^2=\begin{pmatrix}
	-1&0\\0&-1
\end{pmatrix}}\mapsto -e=\J ded=d\circ_e d,\\  {\begin{pmatrix}
	i&0\\0&-i
\end{pmatrix}\circ\begin{pmatrix}
	0&i\\i&0
\end{pmatrix}}=0\mapsto 0=\J ced=c\circ_e d. \end{gathered}$$
Finally, the validity of $(c)$ is obvious.\smallskip

$(ii)$ Assume $z\in S(C)$ and $\|e\pm z\|=2$. Take the mapping $\iota$ from $(i)$. By property $(a)$ we see that $\iota^{-1}(z)\in S(S_2(\mathbb{C}))$ and $\|1\pm \iota^{-1}(z)\|=2$. Having in mind that $S_2(\mathbb{C})$ is a JB$^*$-subalgebra of $M_2(\mathbb{C})$, we deduce from Lemma~\ref{l MO l21} that $\iota^{-1}(z)\in\mathcal{H}$, so
 $z\in\iota(\mathcal{H}\cap S_2(\mathbb{C}))$. Applying $(a)$ we see that $z\in X_2(e)_{sa}$. Further, since $\mathcal{H}\perp 1$ in the Hilbert-Schmidt inner product, property $(b)$ shows that $z\perp_2 e$ in $X$. Hence $z\in\mathcal{H}_e$. Further, since $\|z\|=\|z\|_2=1$, $z$ is easily seen to be a unitary tripotent in $X$.
 \end{proof}

Next we focus on the way the structure of spin factors may be applied to general rank-$2$ Cartan factors.
The first step is the following  lemma which is essentially contained in the classification of JB$^*$-triples of finite rank (cf. \cite[Theorem 4.10]{Ka81}, \cite{Ka97} and \cite{FriRu85}), and was compiled in \cite[Lemma 2.7]{FerMarPe} from where we have borrowed it.

\begin{lemma}\label{l Peirce-2 of a rank-2 is a spin or CoplusC}\cite[Lemma 2.7]{FerMarPe} Let $e_1$ and $e_2$ be two orthogonal minimal tripotents in a JBW$^*$-triple $M$. Then $M_2(e_1+ e_2)$ is either $\mathbb{C} \oplus^{\infty} \mathbb{C}$ or a spin factor.
\end{lemma}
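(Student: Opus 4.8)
The plan is to analyse the unital JBW$^*$-algebra $A:=M_2(e)$, where $e:=e_1+e_2$, purely through Peirce calculus, and to show that its self-adjoint part is the Jordan algebra of a positive definite quadratic form; this identifies $A$ as a spin factor (or, in the degenerate case, as $\mathbb{C}\oplus^\infty\mathbb{C}$). First I would record the joint Peirce decomposition relative to the orthogonal pair $e_1\perp e_2$. Since $L(e,e)=L(e_1,e_1)+L(e_2,e_2)$ acts as the scalar $\tfrac{i+j}{2}$ on $M_i(e_1)\cap M_j(e_2)$, the Peirce-$2$ space of $e$ collects exactly the pieces with $i+j=2$, namely
$$M_2(e)=M_2(e_1)\oplus\big(M_1(e_1)\cap M_1(e_2)\big)\oplus M_2(e_2)=\mathbb{C}e_1\oplus W\oplus\mathbb{C}e_2,$$
where $W:=M_1(e_1)\cap M_1(e_2)$, and where I have used $M_2(e_i)=\mathbb{C}e_i$ by minimality together with $M_2(e_1)\subseteq M_0(e_2)$. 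A short Peirce computation then shows that $e_1,e_2$ are self-adjoint for $*_e$ and that $W$ is $*_e$-invariant, so $A_{sa}=\mathbb{R}e_1\oplus W_{sa}\oplus\mathbb{R}e_2$.

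The heart of the argument concerns a self-adjoint $w\in W_{sa}$. Working inside the JBW$^*$-algebra $A$ (whose triple product is recovered from $\circ_e$ and $*_e$), the Peirce rules force $w\circ_e w=\{w,e,w\}\in\mathbb{C}e_1\oplus\mathbb{C}e_2$, say $w\circ_e w=\nu_1 e_1+\nu_2 e_2$ with $\nu_1,\nu_2\in\mathbb{R}$, and one checks that $\{w,w,e_i\}=\nu_i e_i$. Axiom (b) (positivity of the spectrum of $L(w,w)$) gives $\nu_1,\nu_2\ge 0$, while $\{w,w,w\}=(w\circ_e w)\circ_e w=\tfrac{\nu_1+\nu_2}{2}\,w$ together with axiom (c) yields $\tfrac{\nu_1+\nu_2}{2}=\|w\|^2$. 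The decisive step I would take is to normalise $\|w\|=1$: then $\{w,w,w\}=w$, so $w$ is itself a tripotent, and hence the eigenvalues of $L(w,w)$ lie in $\{0,\tfrac12,1\}$. Since $\{w,w,e_i\}=\nu_i e_i$ exhibits each $e_i$ as an eigenvector of $L(w,w)$ with eigenvalue $\nu_i$, we get $\nu_1,\nu_2\in\{0,\tfrac12,1\}$, and the constraint $\nu_1+\nu_2=2$ forces $\nu_1=\nu_2=1$. By homogeneity this gives $w\circ_e w=\|w\|^2\,e$ for every $w\in W_{sa}$.

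With this in hand the conclusion follows quickly. For a general $h\in A_{sa}$ I would split off the tracial part and write $h=\lambda e+h_0$ with $h_0=t(e_1-e_2)+w$, $w\in W_{sa}$; using $(e_1-e_2)\circ_e(e_1-e_2)=e$ and $(e_1-e_2)\circ_e w=0$ (both by Peirce, since $e_1\circ_e w=e_2\circ_e w=\tfrac12 w$) I obtain $h_0\circ_e h_0=(t^2+\|w\|^2)\,e=:q(h_0)\,e$ for the positive definite form $q$ on $H:=\mathbb{R}(e_1-e_2)\oplus W_{sa}$. Since $\|h_0\|^2=\|h_0\circ_e h_0\|=q(h_0)$, the form $q$ is exactly the square of the JB-norm on $H$, so $H$ is a real Hilbert space and $A_{sa}=\mathbb{R}e\oplus^1 H$ is the Jordan algebra of a symmetric bilinear form; its complexification $A=A_{sa}\oplus iA_{sa}$ is therefore a spin factor, degenerating to $\mathbb{C}\oplus^\infty\mathbb{C}$ exactly when $W=0$ (equivalently $\dim H=1$). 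The main obstacle is the passage in the second paragraph: recognising that a norm-one self-adjoint element of the connecting space $W$ is automatically a tripotent, and then extracting $\nu_1=\nu_2=1$ from the admissible Peirce eigenvalues. This is the only place where the hypothesis of \emph{two} orthogonal minimal tripotents is genuinely used, and it is precisely what excludes all non-spin rank-$2$ behaviour without appealing to the full finite-rank classification.
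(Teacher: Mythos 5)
Your proof is correct, but it follows a genuinely different route from the paper, which in fact offers no proof at all: the lemma is quoted verbatim from \cite[Lemma 2.7]{FerMarPe} and, as the authors note, is ``essentially contained in the classification of JB$^*$-triples of finite rank'' (Kaup, Dang--Friedman, Friedman--Russo). You replace that classification machinery by a direct Peirce-calculus argument: the joint Peirce decomposition $M_2(e)=\mathbb{C}e_1\oplus W\oplus\mathbb{C}e_2$ with $W=M_1(e_1)\cap M_1(e_2)$, and then the key identity $w\circ_e w=\|w\|^2 e$ for $w\in W_{sa}$, which you extract very cleanly from the triple axioms: a normalized self-adjoint element of $W$ satisfies $\{w,w,w\}=\frac{\nu_1+\nu_2}{2}w=w$, hence is a tripotent, so its Peirce eigenvalues $\nu_1,\nu_2$ at $e_1,e_2$ lie in $\{0,\frac12,1\}$ and the constraint $\nu_1+\nu_2=2$ forces $\nu_1=\nu_2=1$. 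This exhibits $M_2(e)_{sa}$ as the Jordan algebra of a positive definite quadratic form, whose complexification is the type 4 Cartan factor (degenerating to $\mathbb{C}\oplus^{\infty}\mathbb{C}$ exactly when $W=0$); that last identification is a standard fact you quote rather than reprove, which is reasonable. Your approach buys self-containedness and shows precisely where orthogonality and minimality enter; the paper's citation buys brevity and defers to the established structure theory. Two small points deserve an explicit line in a polished write-up: polarizing $w\circ_e w=\|w\|^2 e$ to see that $(w_1,w_2)\mapsto\frac12\bigl(\|w_1+w_2\|^2-\|w_1\|^2-\|w_2\|^2\bigr)$ is a genuine inner product on $W_{sa}$ (so that $H$ really is a Hilbert space, completeness coming from the weak$^*$/norm closedness of $W_{sa}$), and the routine verification that the complexification of a JB spin factor is the Cartan factor of type 4 with the triple product and norm of \eqref{eq spin product} and \eqref{eq spin norm}.
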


Let $e_1$ and $e_2$ be two orthogonal minimal tripotents in a JB$^*$-triple $E$. It follows from the weak$^*$-density of $E$ in $E^{**}$ that $e_1$ and $e_2$ are minimal tripotents in $E^{**}$. Clearly, $e_1\perp e_2$ in $E^{**}$. So, by Lemma \ref{l Peirce-2 of a rank-2 is a spin or CoplusC} the Peirce subspace $E^{**}_2(e_1+e_2)$ is either $\mathbb{C} \oplus^{\infty} \mathbb{C}$ or a spin factor. Since $E_2(e_1+e_2)$ is weak$^*$-dense in $E^{**}_2(e_1+e_2),$ and every spin factor is reflexive, we can deduce that $E_2(e_1+e_2)$ also is either $\mathbb{C} \oplus^{\infty} \mathbb{C}$ or a spin factor. So, we get the following improvement of Lemma~\ref{l Peirce-2 of a rank-2 is a spin or CoplusC}.

\begin{lemma}\label{l Peirce-2 of a rank-2 is a spin or CoplusC without duality} Let $e_1$ and $e_2$ be two orthogonal minimal tripotents in a JB$^*$-triple $E$. Then $E_2(e_1+ e_2)$ is either $\mathbb{C} \oplus^{\infty} \mathbb{C}$ or a spin factor.
\end{lemma}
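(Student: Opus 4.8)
The plan is to reduce the statement to the already-established JBW$^*$-triple case, namely Lemma~\ref{l Peirce-2 of a rank-2 is a spin or CoplusC}, by passing to the bidual $E^{**}$, which is a JBW$^*$-triple whose triple product extends that of $E$, and then to descend back to $E$ by exploiting finite-rank reflexivity. The whole argument is a soft duality argument; no structure theory beyond Lemma~\ref{l Peirce-2 of a rank-2 is a spin or CoplusC} is needed.

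First I would check that $e_1$ and $e_2$ remain orthogonal minimal tripotents when viewed inside $E^{**}$. Orthogonality is the triple-product condition $L(e_1,e_2)=0$, which is preserved because the product of $E^{**}$ extends that of $E$. Minimality follows from the weak$^*$-density of $E$ in $E^{**}$: since the triple product of the JBW$^*$-triple $E^{**}$ is separately weak$^*$-continuous, the operator $L(e_i,e_i)$ is weak$^*$-continuous, so $E^{**}_2(e_i)$ is the weak$^*$-closure of $E_2(e_i)=\mathbb{C} e_i$, which is already weak$^*$-closed; hence $E^{**}_2(e_i)=\mathbb{C} e_i$ and $e_i$ is minimal in $E^{**}$. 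Applying Lemma~\ref{l Peirce-2 of a rank-2 is a spin or CoplusC} to $E^{**}$ then yields that $E^{**}_2(e_1+e_2)$ is either $\mathbb{C} \oplus^{\infty} \mathbb{C}$ or a spin factor.

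Next I would identify $E_2(e_1+e_2)$ as a weak$^*$-dense subspace of $E^{**}_2(e_1+e_2)$. Writing $e=e_1+e_2\in E$, the Peirce projection $P_2(e)$ on $E$ is exactly the restriction to $E$ of the Peirce projection $P_2(e)$ on $E^{**}$, since both are the same polynomial in $L(e,e)$ and $L(e,e)$ on $E^{**}$ restricts to $L(e,e)$ on $E$. As $P_2(e)$ is weak$^*$-continuous on $E^{**}$ and $E$ is weak$^*$-dense in $E^{**}$, the image $E_2(e)=P_2(e)(E)$ is weak$^*$-dense in $P_2(e)(E^{**})=E^{**}_2(e)$.

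Finally I would invoke reflexivity to close the gap. Both $\mathbb{C} \oplus^{\infty} \mathbb{C}$ and every spin factor have finite rank and are therefore reflexive, so $E^{**}_2(e)$ is reflexive; on a reflexive dual space the weak$^*$ topology coincides with the weak topology. The subspace $E_2(e)$ is norm closed, hence weakly closed (Mazur), hence weak$^*$-closed in $E^{**}_2(e)$; being also weak$^*$-dense, it must coincide with $E^{**}_2(e)$. Thus $E_2(e_1+e_2)=E^{**}_2(e_1+e_2)$ is either $\mathbb{C} \oplus^{\infty} \mathbb{C}$ or a spin factor. The only delicate point I anticipate is the bookkeeping that compatibly identifies the Peirce-$2$ subspaces of $E$ and $E^{**}$ and that secures the weak$^*$-continuity of $P_2(e)$; once these are in place, the reflexivity step is entirely routine.
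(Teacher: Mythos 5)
Your proposal is correct and follows essentially the same route as the paper: the authors likewise pass to the bidual, note that $e_1,e_2$ remain orthogonal minimal tripotents in $E^{**}$ by weak$^*$-density, apply Lemma~\ref{l Peirce-2 of a rank-2 is a spin or CoplusC} there, and then use the weak$^*$-density of $E_2(e_1+e_2)$ in $E^{**}_2(e_1+e_2)$ together with the reflexivity of spin factors to conclude equality. Your write-up merely makes explicit the bookkeeping (weak$^*$-continuity of the Peirce projections and the Mazur closed-subspace step) that the paper leaves implicit.
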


The next ingredient is the following lemma which may be seen as a variant of Lemma~\ref{l distance two from the sphere to a rank2 element in spin}$(i)$ for general Cartan factors.

\begin{lemma}\label{L:quatra}
Let $C$ be a Cartan factor of rank at least $2$ and let $v,w\in C$ be two minimal tripotents. Then either for $B=M_2(\mathbb{C})$ or $B=S_2(\mathbb{C})$ there is an isometric triple monomorphism $\iota:B\to C$ such that $v,w\in\iota(B)$ and
$v=\iota\begin{pmatrix}1&0\\0&0\end{pmatrix}$.
\end{lemma}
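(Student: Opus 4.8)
The plan is to reduce the assertion to the case where $C$ is a spin factor, in which the coordinate description of Lemma~\ref{L:shift in spin} together with the transitivity of triple automorphisms on minimal tripotents does the work; this parallels, for a corner in place of the unit, the construction in Lemma~\ref{l distance two from the sphere to a rank2 element in spin}$(i)$. The decisive first step is to find two orthogonal minimal tripotents $e_1,e_2\in C$ with $v,w\in X:=C_2(e_1+e_2)$. Granting this, Lemma~\ref{l Peirce-2 of a rank-2 is a spin or CoplusC without duality} shows that $X$ is $\mathbb{C}\oplus^{\infty}\mathbb{C}$ or a spin factor, and I shall arrange that the first alternative does not occur, so that $X$ is a spin factor with $\dim_{\mathbb{C}}X\ge 3$. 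Since $v$ and $w$ are still minimal tripotents of the subtriple $X$ and the inclusion $X\hookrightarrow C$ is an isometric triple monomorphism, it then suffices to solve the problem inside $X$.

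Inside the spin factor $X=X^{-}\oplus iX^{-}$ I would argue as follows. Writing $v=a_1+ib_1$ and $w=a_2+ib_2$ with $a_j,b_j\in X^{-}$, set $V:=\operatorname{span}_{\mathbb{R}}\{a_1,b_1,a_2,b_2\}$ and, if necessary, enlarge $V$ inside $X^{-}$ (possible since $\dim_{\mathbb{R}}X^{-}=\dim_{\mathbb{C}}X\ge 3$) so that $3\le\dim_{\mathbb{R}}V\le 4$. Then $X_V:=V\oplus iV$ is a subtriple of $X$ containing $v$ and $w$, and it is again a spin factor whose spin norm is the restriction of that of $X$, because both \eqref{eq spin product} and \eqref{eq spin norm} involve only the inner product and the conjugation, which preserve $V\oplus iV$. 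Since spin factors are classified up to isometric triple isomorphism by their dimension, and since $S_2(\mathbb{C})$ and $M_2(\mathbb{C})$ are the spin factors of complex dimensions $3$ and $4$, there is an isometric triple isomorphism $\theta\colon B\to X_V$, with $B=S_2(\mathbb{C})$ if $\dim_{\mathbb{C}}X_V=3$ and $B=M_2(\mathbb{C})$ if $\dim_{\mathbb{C}}X_V=4$. Writing $p=\begin{pmatrix}1&0\\0&0\end{pmatrix}\in B$, both $\theta(p)$ and $v$ are minimal tripotents of the spin factor $X_V$, so by Lemma~\ref{L:shift in spin}$(iii)$ there is a triple automorphism $\alpha$ of $X_V$ with $\alpha(\theta(p))=v$. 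The map $\iota:=\alpha\circ\theta\colon B\to X_V\subseteq C$ is then an isometric triple monomorphism with $\iota(p)=v$ and $v,w\in X_V=\iota(B)$, as required.

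The heart of the matter is the first step. I would analyse the position of $w$ relative to $v$ through the Peirce decomposition $w=\lambda v+w_1+w_0$ with $\lambda\in\mathbb{C}$, $w_1=P_1(v)(w)\in C_1(v)$ and $w_0=P_0(v)(w)\in C_0(v)$. If $w\in\mathbb{C}v$ I take any minimal $e_2\perp v$ (which exists as $\operatorname{rank}(C)\ge 2$); if $w\perp v$ I take $e_1=v$, $e_2=w$. Otherwise $w_1\ne 0$, and then $w_1$ is a nonzero element of $C_1(v)\cap C_1(e_2)$, so the dichotomy of Lemma~\ref{l Peirce-2 of a rank-2 is a spin or CoplusC without duality} forces $C_2(v+e_2)$ to be a spin factor automatically. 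When $w_0\ne 0$ one has $\operatorname{rank}(w_0)\le\operatorname{rank}(w)=1$, hence $w_0=\|w_0\|\,e_2$ for a minimal tripotent $e_2\in C_0(v)$ (so $e_2\perp v$), and a Peirce computation based on $\{w,w,w\}=w$ should yield $w_0\in C_2(e_2)$ and $w_1\in C_1(e_2)$, whence $w\in C_2(v+e_2)$. The remaining subcase $w_0=0\ne w_1$ is the genuinely delicate one: here one must produce $e_2\perp v$ carrying the ``domain direction'' of $w_1$, by completing $v$ and $w_1$ to a quadrangle via the grid/Peirce calculus for minimal tripotents in a Cartan factor. Finally, the only role of the irreducibility of $C$ is to exclude $X=\mathbb{C}\oplus^{\infty}\mathbb{C}$ in the orthogonal and proportional cases: two orthogonal minimal tripotents in a Cartan factor of rank at least $2$ always admit a common collinear partner, i.e.\ $C_1(e_1)\cap C_1(e_2)\ne\{0\}$, which one checks on the concrete models of the Cartan factors of types $1$--$6$ (cf.\ \cite{Ka97}). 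I expect this verification that $v$ and $w$ share a rank-$2$ Peirce-$2$ space, and in particular the subcase $w_0=0\ne w_1$, to be the main obstacle.
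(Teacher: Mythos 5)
Your route is genuinely different from the paper's: you try to first place $v$ and $w$ inside a common rank-$2$ Peirce-$2$ subspace $C_2(e_1+e_2)$, identify that subspace (or a small subtriple $V\oplus iV$ of it) with a spin factor of complex dimension $3$ or $4$, and then use the classification of low-dimensional spin factors together with the transitivity of automorphisms on minimal tripotents (Lemma~\ref{L:shift in spin}$(iii)$) to normalize $\iota\begin{pmatrix}1&0\\0&0\end{pmatrix}=v$. The second half of this plan is sound. The paper instead invokes \cite[Lemma 3.10]{FerPe18Adv} directly: that lemma produces either a quadrangle $(v,v_2,v_3,v_4)$ of minimal tripotents or a trangle $(v,u,v_2)$ whose linear span contains $w$, and $\iota$ is then written down explicitly on matrix units of $M_2(\mathbb{C})$ or $S_2(\mathbb{C})$.

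The problem is that your ``first step'' is not a preliminary reduction but the entire mathematical content of the lemma, and you leave it unproved. Placing $v$ and $w$ in a common rank-$2$ Peirce-$2$ subspace is essentially equivalent to completing them to a quadrangle or a trangle, which is exactly what \cite[Lemma 3.10]{FerPe18Adv} asserts; you acknowledge that the subcase $P_0(v)(w)=0\neq P_1(v)(w)$ is ``genuinely delicate'' and only gesture at a grid-theoretic construction. The other subcases also contain unverified claims: that $\operatorname{rank}P_0(v)(w)\le 1$, and that the relation $\J www=w$ forces $P_1(v)(w)\in C_1(e_2)$ (one still has to rule out a component in $C_1(v)\cap C_0(e_2)$). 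Finally, to exclude the alternative $C_2(e_1+e_2)=\mathbb{C}\oplus^\infty\mathbb{C}$ in Lemma~\ref{l Peirce-2 of a rank-2 is a spin or CoplusC without duality} you appeal to the fact that orthogonal minimal tripotents in a Cartan factor of rank at least $2$ satisfy $C_1(e_1)\cap C_1(e_2)\neq\{0\}$; note that in the paper this fact is \emph{deduced from} Lemma~\ref{L:quatra} (see the proof of Lemma~\ref{l distance 2 from a tripotent Cartan factor rank geq 2}), so you would need an independent verification — the case-by-case check on the six types of Cartan factors that you mention but do not carry out. As it stands, the proposal is a plausible programme rather than a proof; to complete it you would either have to carry out the grid-theoretic construction in the delicate subcase (at which point you have reproved \cite[Lemma 3.10]{FerPe18Adv}) or simply cite that lemma, as the paper does.
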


\begin{proof}
The assertion follows from
\cite[Lemma 3.10]{FerPe18Adv}). To explain it let us recall some  terminology from \cite{DanFri87,Neher87}.

An ordered quadruple $(u_{1},u_{2},u_{3},u_{4})$ of tripotents in a JB$^*$-triple $E$ is called a \emph{quadrangle} if
$$u_{1}\bot u_{3}, u_{2}\bot u_{4}, u_{1}\top u_{2}\top u_{3}\top u_{4}\top u_{1}\mbox{ and }u_{4}=2 \J
{u_{1}}{u_{2}}{u_{3}}.$$

An ordered triplet $ (v,u,\tilde v)$ of tripotents in $E$, is called a \emph{trangle} if $$v\bot \tilde v, u\vdash v, u\vdash \tilde v\mbox{ and } v = Q(u)\tilde v.$$

Now let us proceed with the proof. By applying Lemma 3.10 in \cite{FerPe18Adv} we conclude that one of the following statements holds:

\begin{enumerate}[$(a)$]\item There exist minimal tripotents $v_2,v_3,v_4$ in $C$
 such that $(v,v_2,v_3,v_4)$ is a quadrangle and $w\in\mbox{span}\{v,v_2,v_3,v_4\}$;
 \item There exist a minimal tripotent $v_2\in V$, and a rank-2 tripotent $u\in C$
 such that $(v, u, v_2)$ is a trangle and $w\in\mbox{span}\{v,u,v_2\}$.
\end{enumerate}

If $(a)$ takes place, we take $B=M_2(\mathbb{C})$ and define $\iota$ by
$$\begin{pmatrix}
	1&0\\0&0
\end{pmatrix}\mapsto v, \begin{pmatrix}
	0&1\\0&0
\end{pmatrix}\mapsto v_2,\begin{pmatrix}
	0&0\\1&0
\end{pmatrix}\mapsto v_4, \begin{pmatrix}
	0&0\\0&1
\end{pmatrix}\mapsto v_3.$$
If $(b)$ takes place, we take $B=S_2(\mathbb{C})$ and define $\iota$ by
$$\begin{pmatrix}
	1&0\\0&0
\end{pmatrix}\mapsto v, \begin{pmatrix}
	0&0\\0&1
\end{pmatrix}\mapsto v_2,\begin{pmatrix}
	0&1\\1&0
\end{pmatrix}\mapsto u.$$
It is easy to check that $\iota$ satisfies the required properties.
\end{proof}

We can now improve the conclusion in Lemma \ref{l distance 2 from a tripotent}.

\begin{lemma}\label{l distance 2 from a tripotent Cartan factor rank geq 2} Let $C$ be a Cartan factor of rank greater than or equal to 2, and let $e$ be a rank-2 tripotent in $C$. Then $C_2(e)$ is a spin factor. Furthermore, suppose $x$ is a norm-one element in $C$ such that $\|e\pm x\| =2.$ Then $P_2(e)(x)$ is a complete tripotent in $C_2(e)$ lying in $\mathcal{H}_e$, and $x= P_2(e)(x) + P_0(e)(x)$. If $C$ has rank-2 then $x$ is a complete tripotent in $C$.
\end{lemma}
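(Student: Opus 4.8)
The plan is to transfer the whole question into the Peirce-$2$ space $C_2(e)$, where the spin-factor results of this section apply, and then to propagate the information back to the full element $x$.

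\emph{Step 1: $C_2(e)$ is a spin factor.} Since $e$ has rank $2$ it is a unitary tripotent of the rank-$2$ triple $C_2(e)$, hence $e=e_1+e_2$ for two orthogonal minimal tripotents $e_1,e_2$ (it cannot be a single minimal tripotent). By Lemma~\ref{l Peirce-2 of a rank-2 is a spin or CoplusC without duality}, $C_2(e)=C_2(e_1+e_2)$ is either $\mathbb{C}\oplus^{\infty}\mathbb{C}$ or a spin factor; via the joint Peirce decomposition $C_2(e)=\mathbb{C}e_1\oplus\mathbb{C}e_2\oplus(C_1(e_1)\cap C_1(e_2))$, the reducible case occurs exactly when $C_1(e_1)\cap C_1(e_2)=\{0\}$. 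I would rule this out using that $C$ is a factor, concretely through Lemma~\ref{L:quatra} applied to $v=e_1$, $w=e_2$: the orthogonality $e_1\perp e_2$ forces $e_2$ to be a unimodular multiple of the unique vertex of the resulting quadrangle (resp. trangle) orthogonal to $e_1$, and the remaining collinear (resp. governing) vertex then lies in $C_1(e_1)\cap C_1(e_2)$. Thus $C_1(e_1)\cap C_1(e_2)\neq\{0\}$ and $C_2(e)$ is a genuine spin factor of dimension $\ge 3$.

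\emph{Step 2: reduction to the Peirce-$2$ part.} Put $z=P_2(e)(x)$. Applying Lemma~\ref{l distance 2 from a tripotent} to $\|e+x\|=2$ and to $\|e+(-x)\|=2$ gives $\|z\|=1$ and $\|e\pm z\|=2$ (these norms, computed in the isometrically embedded subtriple $C_2(e)$, agree with those in $C$). As $e$ is the unit, hence a unitary tripotent, of the spin factor $C_2(e)$, Lemma~\ref{l distance two from the sphere to a rank2 element in spin}$(ii)$ yields that $z\in\mathcal{H}_e$ and that $z$ is a unitary---in particular complete---tripotent of $C_2(e)$. This settles the claim that $P_2(e)(x)$ is a complete tripotent of $C_2(e)$ lying in $\mathcal{H}_e$.

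\emph{Step 3: $P_1(e)(x)=0$.} The crux is that $e$ and $z$ induce the same Peirce grading: $C_k(z)=C_k(e)$ for $k=0,1,2$. One inclusion is free, since $z$ unitary in $C_2(e)$ gives $C_2(e)=(C_2(e))_2(z)=C_2(e)\cap C_2(z)$, i.e. $C_2(e)\subseteq C_2(z)$. For the reverse inclusion I would use the standard identity $P_2(z)=Q(z)^2$, where $Q(z)(\,\cdot\,)=\{z,\cdot,z\}$: because $z\in C_2(e)$, the Peirce rule $\{C_2(e),C_j(e),C_2(e)\}\subseteq C_{4-j}(e)$ shows that $Q(z)$ annihilates $C_0(e)\oplus C_1(e)$ and maps $C$ into $C_2(e)$, whence $C_2(z)=Q(z)^2(C)\subseteq C_2(e)$. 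Hence $C_2(z)=C_2(e)$; combining this with $C_2(e)\perp C_0(e)$ and $e,z\in C_2(e)=C_2(z)$ gives $C_0(e)=C_0(z)$, and therefore $C_1(e)=C_1(z)$. Consequently $P_2(z)(x)=P_2(e)(x)=z$, so \eqref{eq FR 1.6} forces $x=z+P_0(z)(x)$; since $P_0(z)(x)\in C_0(z)=C_0(e)$, this is precisely $x=P_2(e)(x)+P_0(e)(x)$.

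\emph{Step 4 and main obstacle.} If $C$ has rank $2$, then $e$ must be complete: a nonzero $C_0(e)$ would contain a minimal tripotent $w$, and $e_1,e_2,w$ would be three pairwise orthogonal minimal tripotents, contradicting $\mathrm{rank}(C)=2$. Thus $C_0(e)=\{0\}$, so Step 3 gives $x=z$, and $C_0(z)=C_0(e)=\{0\}$ makes $z$ complete in $C$; hence $x$ is a complete tripotent of $C$. The step I expect to be delicate is Step 3: Lemma~\ref{l distance two from the sphere to a rank2 element in spin}$(ii)$ only controls the Peirce-$2$ component $P_2(e)(x)$, and the whole force of the statement comes from promoting this to the full element $x$; the quadratic-representation identity $P_2(z)=Q(z)^2$ together with $z\in C_2(e)$ is exactly what bridges this gap. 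A secondary point needing care is the exclusion of the reducible case in Step 1, which is the only place where the factor hypothesis (via Lemma~\ref{L:quatra}) is genuinely used.
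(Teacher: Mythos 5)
Your proof is correct and follows essentially the same route as the paper: Lemma~\ref{l Peirce-2 of a rank-2 is a spin or CoplusC without duality} together with Lemma~\ref{L:quatra} to identify $C_2(e)$ as a spin factor, then Lemma~\ref{l distance 2 from a tripotent} and Lemma~\ref{l distance two from the sphere to a rank2 element in spin}$(ii)$ to place $P_2(e)(x)$ in $\mathcal{H}_e$ as a unitary tripotent $z$ of $C_2(e)$, and finally \eqref{eq FR 1.6} to kill the Peirce-$1$ part. The only difference is that your Step~3 spells out, via $P_2(z)=Q(z)^2$ and Peirce arithmetic, the coincidence of the Peirce projections of $e$ and $z$, a standard fact the paper uses implicitly when writing $P_2(u)(x)=P_2(e)(x)=u$ and $x=u+P_0(e)(x)$.
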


\begin{proof} Since $e=v+w$ where $v$ and $w$ are mutually orthogonal tripotents, Lemma \ref{l Peirce-2 of a rank-2 is a spin or CoplusC without duality} shows that $C_2(e)$ is either a spin factor or $\mathbb{C}\oplus^\infty\mathbb{C}$. But the second possibility is excluded by Lemma~\ref{L:quatra}. Indeed, let $B$ and $\iota$ be provided by this lemma. Since $v\perp w$, we deduce that $\iota^{-1}(w)$ is a scalar multiple of
$\begin{pmatrix}0&0\\0&1 \end{pmatrix}$, hence $\iota^{-1}(e)$ is unitary in $B$. It follows that the dimension of $C_2(e)$ is at least $3$.
Hence, $C_2(e)$ must be a spin factor.\smallskip

We consider next the second statement. Under these hypotheses, it follows from Lemma \ref{l distance 2 from a tripotent} that $\|e\pm P_2(e)(x)\| =2$ and $\|P_2(e)(x)\| =1$. Thus, we can deduce from  Lemma~\ref{l distance two from the sphere to a rank2 element in spin}$(ii)$ that $u= P_2(e)(x)$ is a rank-2 (complete) tripotent in $C_2(e)$ lying in $\mathcal{H}_e$. Therefore $P_2(u) (x) = P_2(e) (x) =u$ and \eqref{eq FR 1.6} implies that $$x = P_2(u) (x) + P_0(u)(x) = u + P_0(e) (x).$$

Finally, if we assume that $C$ has rank-2, then $u$ and $e$ must be complete tripotents in $C$, and thus $x= P_2(e) (x) = u$.
\end{proof}

The next proposition on the existence of a real linear extension on a large real linear subspace is a key step in our arguments.

\begin{proposition}\label{p linearity on the hermitian part} Let $\Delta: S(C) \to S(Y)$ be a surjective isometry, where $Y$ is a real Banach space and $C$ is a rank-2 Cartan factor. Let $e$ be a rank-2 tripotent in $C$. Then the restriction $\Delta|_{S(C_2(e)_{sa})}$ admits a real linear extension to $C_2(e)_{sa}$.
\end{proposition}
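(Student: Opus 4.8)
The plan is to exploit the structural description already in place. By Lemma~\ref{l distance 2 from a tripotent Cartan factor rank geq 2} the subtriple $C_2(e)$ is a spin factor, and by Proposition~\ref{p hermitian part of the Peirce-2 subspace of a rank-2 trip in a spin} we have the $\ell^1$-decomposition $C_2(e)_{sa}=\mathbb{R}e\oplus^{1}\mathcal{H}_e$, where $\mathcal{H}_e$ is a real Hilbert space on which $\|\cdot\|$ and $\|\cdot\|_2$ agree. A direct computation with this norm shows that the extreme points of $\mathcal{B}_{C_2(e)_{sa}}$ are exactly $\{\pm e\}\cup S(\mathcal{H}_e)$ and that every maximal proper face is a segment $[\varepsilon e,h]$ with $\varepsilon=\pm1$ and $h\in S(\mathcal{H}_e)$. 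Since $C$ has finite rank it is reflexive, so by Corollary~\ref{c affine on convex subsets of the sphere in reflexive} $\Delta$ is affine on every convex subset of $S(C)$, in particular on each such segment. Consequently, if I can produce a real linear isometry $S\colon\mathcal{H}_e\to Y$ with $S|_{S(\mathcal{H}_e)}=\Delta|_{S(\mathcal{H}_e)}$, then $T(te+h):=t\,\Delta(e)+S(h)$ will be the desired extension: using $\Delta(\varepsilon e)=\varepsilon\Delta(e)$ (Corollary~\ref{c Tingley antipodal thm for tripotents}) together with affineness on the segments, one checks $T=\Delta$ on all of $S(C_2(e)_{sa})$. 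So the whole problem reduces to extending $\Delta$ linearly on the Hilbert part $\mathcal{H}_e$.

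This reduction is where the real difficulty sits, and I expect it to be the main obstacle: $\mathcal{H}_e$ is strictly convex, its sphere contains no segments, so the affineness coming from reflexivity gives no information there. To get linearity on $\mathcal{H}_e$ it is enough to treat each two-dimensional subspace $P=\operatorname{span}_{\mathbb{R}}\{c,d\}$ spanned by an orthonormal pair $c,d\in\mathcal{H}_e$ and to show $\Delta(\cos\theta\,c+\sin\theta\,d)=\cos\theta\,\Delta(c)+\sin\theta\,\Delta(d)$; the linear maps arising on the various planes then agree automatically (a linear map of a $2$-plane is determined by its values on the unit circle) and patch to the global $S$. The plan is to extract this circle identity from the triple structure by a two-step additivity argument.

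The first step is a linearity statement along the phase circle of a single minimal tripotent: for any minimal tripotent $v$ of $C$ I claim $\Delta(e^{i\phi}v)=\cos\phi\,\Delta(v)+\sin\phi\,\Delta(iv)$. To see this I pick a minimal tripotent $w$ with $w\perp v$ (possible since $\operatorname{rank}(C)\ge2$) and consider the disk $D=\{\mu v+w:\mu\in\mathbb{C},\ |\mu|\le1\}$. Because $v\perp w$ one has $\|\mu v+w\|=\max(|\mu|,1)=1$, so $D$ is a convex subset of $S(C)$ and $\Delta|_D$ is affine; since $\Delta(\mu v+w)-\Delta(w)$ vanishes at $\mu=0$, it is the restriction of a real linear map $R\colon\mathbb{C}\cong\mathbb{R}^2\to Y$. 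On the boundary $|\mu|=1$ the element $\mu v$ is a minimal tripotent orthogonal to $w$, so Corollary~\ref{c additivity on mutually orthogonal minimal tripotents}$(a)$ gives $\Delta(\mu v+w)=\Delta(\mu v)+\Delta(w)$, whence $\Delta(\mu v)=R(\mu)$. Linearity of $R$ together with $R(1)=\Delta(v)$ and $R(i)=\Delta(iv)$ yields the phase-circle identity. The point of this step is that introducing an orthogonal partner $w$ manufactures a flat two-dimensional face inside the sphere, which is exactly what the round sphere of $\mathcal{H}_e$ lacks on its own; I consider it the crux of the whole argument.

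The second step assembles the circle identity on $P$. Given orthonormal $c,d\in\mathcal{H}_e$, the elements $v_{\pm}=\tfrac{c\pm id}{2}$ are mutually orthogonal minimal tripotents of $C$ with $v_++v_-=c$ and $v_+-v_-=id$ (this follows from the description of tripotents in a spin factor, transported by the phase automorphisms of Lemma~\ref{L:shift in spin}, and minimality in $C$ follows from $\operatorname{rank}(C)=2$). A short computation gives $\cos\theta\,c+\sin\theta\,d=e^{-i\theta}v_++e^{i\theta}v_-$, and since $e^{-i\theta}v_+\perp e^{i\theta}v_-$ are minimal tripotents, Corollary~\ref{c additivity on mutually orthogonal minimal tripotents}$(a)$ gives $\Delta(\cos\theta\,c+\sin\theta\,d)=\Delta(e^{-i\theta}v_+)+\Delta(e^{i\theta}v_-)$. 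Applying the phase-circle identity of the first step to $v_+$ and $v_-$, and then recombining with the additivity relations $\Delta(v_+)+\Delta(v_-)=\Delta(c)$ and $-\Delta(iv_+)+\Delta(iv_-)=\Delta(d)$, collapses everything to $\cos\theta\,\Delta(c)+\sin\theta\,\Delta(d)$, as required. This produces the real linear isometry $S$ on $\mathcal{H}_e$ and, via the first paragraph, the sought extension $T$ on $C_2(e)_{sa}$.
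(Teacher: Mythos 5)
Your proof is correct, but it follows a genuinely different route from the one in the paper. Both arguments start identically: $C_2(e)$ is a spin factor, $C_2(e)_{sa}=\mathbb{R}e\oplus^1\mathcal{H}_e$, and via affineness on the segments $[\pm e,b]$ together with $\Delta(-e)=-\Delta(e)$ the problem reduces to showing that the positively homogeneous extension of $\Delta$ is additive on $\mathcal{H}_e$. From there the paper follows Mori--Ozawa's scheme: it first proves $F(b+c)=F(b)+F(c)$ for $b,c\in\mathcal{H}_e$ of equal norm by a metric computation --- showing $\left\|e\pm\Delta^{-1}\left(\frac1\mu(\Delta(b)+\Delta(c))\right)\right\|=2$, invoking Lemma~\ref{l distance 2 from a tripotent Cartan factor rank geq 2} (the step the introduction flags as specific to rank $2$) to place $x=\Delta^{-1}\left(\frac1\mu(\Delta(b)+\Delta(c))\right)$ in $S(\mathcal{H}_e)$, then computing $\|\mu x-b\|=\|\mu x-c\|=1$ and using Hilbert-space geometry to identify $x$ --- and afterwards upgrades equal-norm additivity to linearity by an induction over dyadic angles. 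You replace both steps by the tripotent calculus of the spin factor: each unit vector of a plane $\mbox{span}_{\mathbb{R}}\{c,d\}\subset\mathcal{H}_e$ is written as $e^{-i\theta}v_++e^{i\theta}v_-$ for the fixed orthogonal minimal tripotents $v_\pm=\frac{c\pm id}{2}$, and everything reduces to additivity of $\Delta$ on orthogonal minimal tripotents (Corollary~\ref{c additivity on mutually orthogonal minimal tripotents}$(a)$, plus Corollary~\ref{c Tingley antipodal thm for tripotents} for the sign) together with real-linearity of $\mu\mapsto\Delta(\mu v)$ on $\mathbb{T}$, which you extract from the flat disk $\{\mu v+w:|\mu|\le1\}\subset F_w^C$; in fact Corollary~\ref{c closed faces associated with tripotents down} combined with Corollary~\ref{c additivity on mutually orthogonal minimal tripotents}$(b)$ gives your phase-circle identity at once, since $\Delta(\mu v)=T_w(\mu v)$ for $|\mu|=1$ and $T_w$ is real linear (this disk trick is essentially the one the authors use later in the proof of Lemma~\ref{l 23 MO in triple version for all rank2 Cartan factors}). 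Your version is shorter and avoids both the distance computation and the dyadic induction, at the price of using the explicit form $\frac{c\pm id}{2}$ of minimal tripotents attached to $\mathcal{H}_e$ (transported by Lemma~\ref{L:shift in spin}) and the fact that a nonzero non-complete tripotent of a rank-$2$ factor is minimal; the paper's metric argument never needs to identify tripotents inside $\mathcal{H}_e$. All the ingredients you use are established in Section~\ref{sec:2} before this proposition, so there is no circularity, and your patching of the plane-by-plane linear maps into a global linear isometry on the Hilbert space $\mathcal{H}_e$ is sound.
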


\begin{proof} Lemma \ref{l distance 2 from a tripotent Cartan factor rank geq 2} proves that $C_2(e)$ is a spin factor whose Hilbert norm is denoted by $\|.\|_2$. By Proposition~\ref{p hermitian part of the Peirce-2 subspace of a rank-2 trip in a spin} we know that
\begin{equation}
\label{eq:l1sum}
C_2(e)_{sa}=\mathbb{R}e\oplus^1 \mathcal{H}_e\end{equation}
where $\mathcal{H}_e$ is a real Hilbert space on which $\|\cdot\|$ coincides with $\|\cdot\|_2$. Corollary \ref{c Tingley antipodal thm for tripotents} gives $\Delta(-e) = - \Delta(e)$ (note that $C$ may be infinite dimensional, thus we cannot apply Tingley's original theorem \cite[Theorem in page 377]{Ting1987}). Further, since on $\mathcal{H}_e$ the two mentioned norms coincide, each element $b\in S(\mathcal{H}_e)$ is a unitary tripotent in $C_2(e)$, in particular another use of Corollary \ref{c Tingley antipodal thm for tripotents} yields $\Delta(-b)=-\Delta(b)$.\smallskip

We will mimic some ideas due to Mori and Ozawa \cite[Lemma 22]{MoriOza2018}. Let $F: C\to Y$ denote the positive homogeneous extension of $\Delta$, that is, $F(0) = 0$ and $F(x) = \|x\| \Delta(\frac{x}{\|x\|})$ for all $x\in C\backslash\{0\}$. To prove that  $\Delta|_{S(C_2(e)_{sa})}$ admits a real linear extension to $C_2(e)_{sa}$ it is enough to show that $F$ is additive on $C_2(e)_{sa}$.\smallskip

The first step is to observe that
\begin{equation}
\label{eq:additivity on edges}
 F(te+sb)=tF(e)+sF(b)\mbox{ whenever }s,t\in\mathbb{R}, b\in S(\mathcal{H}_{e}).\end{equation}
But this is easy if we recall that $\Delta(-e)=-\Delta(e)$, $\Delta(-b)=-\Delta(b)$ and the segments $[(-1)^k e,(-1)^j b]$ are contained in $S(C)$ for $k,j\in \{0,1\}$, thus $\Delta$ is affine on each of this segments by Corollary~\ref{c affine on convex subsets of the sphere in reflexive}.

Let us continue by proving that $F$ is additive on $\mathcal{H}_e$. The first step to this aim is to show that
\begin{equation}\label{eq almost linearity on b,c in He} F(b+c)=F(b)+F(c)\mbox{ whenever }b,c\in \mathcal{H}_e \mbox{ and }\|b\|=\|c\|.
\end{equation}
It it enough to consider the case when $b$ and $c$ are linearly independent (over $\mathbb{R}$) and $\|b\|=\|c\|=1$.

For any $\lambda\in [-1,1]$ we deduce using \eqref{eq:additivity on edges} that
$$\begin{aligned}
\left\| 2(1-|\lambda|) e +\lambda (b+c) \right\| &= \left\| ((1-|\lambda|) e +\lambda b) - (-(1-|\lambda|) e -\lambda c) \right\| \\&= \left\| \Delta((1-|\lambda|) e +\lambda b) - \Delta(-(1-|\lambda|) e -\lambda c) \right\|\\&= \left\| 2(1-|\lambda|) \Delta(e) +\lambda (\Delta(b)+\Delta(c)) \right\|.\end{aligned}$$  Taking $\mu= \|b+c\|= \|\Delta(b) +\Delta(c)\|>0$ and $\lambda = \frac{2}{2+\mu}\in (0,1)$ in the previous identity we deduce that
$$\begin{aligned}
\left\| e \pm \Delta^{-1} \left(\frac{1}{\mu} (\Delta(b) +\Delta(c))\right)\right\|&=\left\|\Delta(e)\pm\frac{1}{\mu} (\Delta(b) +\Delta(c))\right\|
 \\&= \frac{1}{\lambda \mu} \left\| 2(1-\lambda) \Delta(e) \pm \lambda (\Delta(b) +\Delta(c))\right\|
 \\&= \frac{1}{\lambda \mu} \left\| 2(1-|\lambda|) e +\lambda (b+c) \right\| = \left\|e\pm \frac{1}{\mu} (b+c)\right\|= 2,\end{aligned}$$
 where the last equality follows from \eqref{eq:l1sum}. Now, by applying Lemma \ref{l distance 2 from a tripotent Cartan factor rank geq 2} to $e$ and $\displaystyle x=\Delta^{-1} \left(\frac{1}{\mu} (\Delta(b) +\Delta(c))\right) \in S(C)$, we deduce that $x\in S(\mathcal{H}_e)$.

If $\mu\leq 1$ it follows that
$$\begin{aligned}1-\mu + \|\mu x - b\|&= \| (1-\mu) e + \mu x- b\| =  \| \Delta\left((1-\mu) e + \mu x\right)- \Delta(b)\|
\\&=  \|(1-\mu) \Delta\left( e \right)+ \mu \Delta\left( x\right)- \Delta(b)\| =  \|(1-\mu) \Delta\left( e \right)+ \Delta(c)\|\\&= \| F\left(  (1-\mu) e +c\right)\|= \|   (1-\mu) e +c \|=2-\mu,\end{aligned}$$ where we used \eqref{eq:l1sum} (in the first and last equalities) and \eqref{eq:additivity on edges} (in the third and fifth equalities). This proves that $\|\mu x -b\| = 1$.\smallskip

If $\mu \geq 1$, we similarly obtain $$\begin{aligned}1-\frac{1}{\mu} + \left\| x -\frac{1}{\mu} b\right\| &= \left\| x- \left( \left(1-\frac{1}{\mu}\right) e + \frac{1}{\mu} b \right)\right\| =  \left\| \Delta(x) - \Delta\left(\left(1-\frac{1}{\mu}\right) e + \frac{1}{\mu} b\right)\right\|  \\ &= \left\| \Delta(x) - \left(1-\frac{1}{\mu}\right) \Delta\left( e \right) - \frac{1}{\mu} \Delta\left( b\right)\right\| =  \left\|\frac{1}{\mu}\Delta(c) - \left(1-\frac{1}{\mu}\right) \Delta(e)\right\| \\ &=  \left\|\Delta\left( \frac{1}{\mu} c - \left(1-\frac{1}{\mu}\right) e\right) \right\|= 1,
\end{aligned}$$ witnessing that $\left\|x-\frac{1}{\mu} b\right\| = \frac{1}{\mu}$ and thus $\|\mu x -b\| = 1$. We have therefore shown that $\left\| \|b+c\| x - b \right\|= 1$. Similar arguments give $\left\| \|b+c\| x - c \right\|= 1$.\smallskip

Finally, working in the Hilbert space $\mathcal{H}_e$ with the vectors $b,c,x\in S(\mathcal{H}_e)$ satisfying $\left\| \|b+c\| x - b \right\|= 1$ and $\left\| \|b+c\| x - c \right\|= 1,$ we obtain $$\langle x |b \rangle = \langle x | c \rangle = \frac12 \|b+c\|,$$ hence $x=\frac{ b +c}{\|b+c\|}$, which finishes the proof of \eqref{eq almost linearity on b,c in He}. \smallskip

Finally, since $\mathcal{H}_e$ is a Hilbert space, to prove the additivity of $F$ on $\mathcal{H}_e$ it is enough to prove that
\begin{multline*}F(tb_0+sc_0)=tF(b_0)+sF(c_0)\mbox{ whenever }s,t\in\mathbb{R}\\\mbox{ and }b_0,c_0 \mbox{ are orthogonal elements of norm one}.\end{multline*}
Note that the linear span of $b_0$ and $c_0$ is canonically isometric with $\mathbb{C}$ considered as a two-dimensional real Hilbert space. Therefore it is enough to prove the following claim.

\smallskip

{\sc Claim.} Let $Y$ be a real normed space and $G:\mathbb{C}\to Y$ be a continuous positive homogeneous mapping satisfying $G(-z)=-G(z)$ for $z\in\mathbb{C}$ and $G(a+b)=G(a)+G(b)$ if $|a|=|b|$. Then $G$ is real linear.

\smallskip

{\it Proof of the claim.} Since $G$ is positive homogeneous and continuous, it is enough to prove
that
$$G\left(\cos\left(\frac{k \pi}{2^n}\right)  + i\sin\left(\frac{k \pi}{2^n}\right) \right) = \cos\left(\frac{k \pi}{2^n}\right) G( 1) + \sin\left(\frac{k \pi}{2^n}\right)  G(i),$$ for all $n\in \mathbb{N}\cup\{0\}$, $k\in \mathbb{Z}$.
This may be proved by induction on $n$. The case $n=0$ follows from the assumption that $G(-z)=-G(z)$ for $z\in\mathbb{C}$.

Assume the statement holds for some $n\in\mathbb{N}\cup\{0\}$. Take any $k\in\mathbb{Z}$. If $k$ is even, the respective equality (for $n+1$ and $k$) is covered by the induction hypothesis, so it is enough to consider $k$ odd, i.e., $k=2l+1$ for some integer $l$. We set
$\gamma=\left|e^{\frac{il\pi}{2^n}}+e^{\frac{i(l+1)\pi}{2^n}}\right|$ and observe that
$$e^{\frac{i(2l+1)\pi}{2^{n+1}}}=\frac1\gamma(e^{\frac{il\pi}{2^n}}+e^{\frac{i(l+1)\pi}{2^n}}).$$
Hence
$$\begin{aligned}
G\Bigg(\cos\left(\frac{(2l+1) \pi}{2^{n+1}}\right)  +& i\sin\left(\frac{(2l+1) \pi}{2^{n+1}}\right) \Bigg)
=G\left(e^{\frac{i(2l+1)\pi}{2^{n+1}}}\right)\\&=\frac1\gamma\Big(G\left(e^{\frac{il\pi}{2^n}}\right)+G\left(e^{\frac{i(l+1)\pi}{2^n}}\right)\Big)
\\&=\frac1\gamma\Big(\cos\left(\frac{l \pi}{2^n}\right) G(1) + \sin\left(\frac{l \pi}{2^n}\right) G(i) \\&\qquad + \cos\left(\frac{(l+1) \pi}{2^n}\right)G(1) + \sin\left(\frac{(l+1) \pi}{2^n}\right) G(i)\Big)
\\&=\cos\left(\frac{(2l+1) \pi}{2^{n+1}}\right) G(1) + \sin\left(\frac{(2l+1) \pi}{2^{n+1}}\right) G(i),
\end{aligned}$$
which completes the induction step and hence the proof of the claim.\qed
\smallskip

Summarizing, since we have proved that $F$ is real linear on $\mathcal{H}_e$ and \eqref{eq:additivity on edges}, we may conclude that $F$ is real linear on $C_2(e)_{sa}$.
\end{proof}

When in the proof of \cite[Lemma 23]{MoriOza2018}, Proposition \ref{p linearity on the hermitian part} replaces \cite[Lemma 22]{MoriOza2018} we can obtain the next lemma.

\begin{lemma}\label{l 23 MO in triple version for all rank2 Cartan factors} Let $\Delta: S(C) \to S(Y)$ be a surjective isometry, where $Y$ is a real Banach space and $C$ is a rank-2 Cartan factor. Let $\varphi_v$ be a pure atom on $C$, where $v$ is a minimal tripotent in $C$, and let $\psi\in \hbox{supp}_{\Delta}(F_v^{C})$. Suppose $e$ is a rank-2 tripotent such that $v\leq e$. Then $\psi \Delta(x) = \Re\varphi_v (x)$ for all $x\in S(C_2(e)_{sa})$ and for all $x\in \mathbb{T} v \oplus \mathbb{T} (e-v)$.
\end{lemma}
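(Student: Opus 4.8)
The plan is to prove the two assertions separately, in each case reducing the desired identity to a statement about a single real linear functional. Throughout write $w=e-v$, so that $w$ is a minimal tripotent orthogonal to $v$ and $e=v+w$; since $w\in C_0(v)$ we have $P_2(v)(e)=v$, whence $\varphi_v(e)=1$ and $\varphi_v(w)=0$. By Lemma~\ref{l distance 2 from a tripotent Cartan factor rank geq 2} the space $C_2(e)$ is a spin factor, and by Proposition~\ref{p hermitian part of the Peirce-2 subspace of a rank-2 trip in a spin} we have the decomposition $C_2(e)_{sa}=\mathbb{R}e\oplus^1\mathcal{H}_e$, where $\mathcal{H}_e$ is a real Hilbert space on which the triple norm agrees with the hilbertian norm. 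Finally, the hypothesis $\psi\in\hbox{supp}_{\Delta}(F_v^{C})$ means exactly that $\Delta(F_v^C)=\psi^{-1}(\{1\})\cap\mathcal{B}_Y$; in particular $\psi\Delta\equiv1$ on $F_v^C$, and since $\Delta(-F_v^C)=-\Delta(F_v^C)$ by Lemma~\ref{l intersection faces MoriOzawa L8}, also $\psi\Delta\equiv-1$ on $-F_v^C$. Note that $v+\gamma w\in F_v^C$ for every $\gamma\in\mathbb{T}$, because $\gamma w\in\mathcal{B}_{C_0(v)}$.

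For the first assertion I would invoke Proposition~\ref{p linearity on the hermitian part} to obtain a real linear map $T\colon C_2(e)_{sa}\to Y$ extending $\Delta|_{S(C_2(e)_{sa})}$, and set $\Phi:=\psi\circ T$, a real linear functional on $C_2(e)_{sa}$ satisfying $\Phi=\psi\Delta$ on $S(C_2(e)_{sa})$. Since $\Re\varphi_v$ is also real linear, it suffices to check $\Phi=\Re\varphi_v$ on the two summands. On $\mathbb{R}e$ this is immediate: $e\in F_v^C$ gives $\Phi(e)=\psi\Delta(e)=1=\Re\varphi_v(e)$. On the Hilbert space $\mathcal{H}_e$ both $\Phi$ and $\Re\varphi_v$ are bounded functionals of norm at most $1$ (for $\Phi$ because $|\psi\Delta(b)|\le1$ on the sphere, for $\Re\varphi_v$ because $\|\varphi_v\|=1$), and they agree, taking value $1$, at the unit vector $y:=v-w\in\mathcal{H}_e$, since $y=v+(-w)\in F_v^C$ forces $\Phi(y)=1$ while $\Re\varphi_v(y)=\varphi_v(v)-\varphi_v(w)=1$. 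As a Hilbert space is smooth, the norm-one functional supporting its ball at $y$ is unique (equality in the Cauchy--Schwarz inequality), so $\Phi$ and $\Re\varphi_v$ coincide on $\mathcal{H}_e$, whence on $C_2(e)_{sa}$. Here one only needs to record that $y\in\mathcal{H}_e$ with $\|y\|=1$, i.e. that $v,w\in C_2(e)_{sa}$ and $y\perp_2 e$, which follow from $v,w$ being the minimal projections of the spin factor $C_2(e)$ summing to its unit $e$ (so $\|v\|_2=\|w\|_2$ and $\langle y|e\rangle=\|v\|_2^2-\|w\|_2^2=0$).

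For the second assertion, fix $\beta\in\mathbb{T}$ and consider the map $\xi\mapsto\psi\Delta(\xi v+\beta w)$ on the closed unit disk $\mathbb{D}\subset\mathbb{C}$. Because $v\perp w$, one has $\|\xi v+\beta w\|=\max(|\xi|,1)=1$ for every $\xi\in\mathbb{D}$, so every segment in $\mathbb{D}$ is carried into $S(C)$ and $\Delta$ is affine on it by Corollary~\ref{c affine on convex subsets of the sphere in reflexive}; hence $\xi\mapsto\psi\Delta(\xi v+\beta w)$ is real affine on $\mathbb{D}$, of the form $A+B\Re\xi+C\Im\xi$. Evaluating at $\xi=1$ (where $v+\beta w\in F_v^C$) and $\xi=-1$ (where $-v+\beta w\in -F_v^C$) gives $A=0$ and $B=1$, so $\psi\Delta(\xi v+\beta w)=\Re\xi+C(\beta)\,\Im\xi$. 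I expect the genuinely complex phases to be the only real obstacle, and it is resolved by a boundedness argument: for $\alpha=e^{i\theta}\in\mathbb{T}$ the value $\cos\theta+C(\beta)\sin\theta$ must lie in $[-1,1]$ for all $\theta$ (as $|\psi\Delta|\le1$ on $S(C)$), and since its maximum over $\theta$ equals $\sqrt{1+C(\beta)^2}$, this forces $C(\beta)=0$. Therefore $\psi\Delta(\alpha v+\beta w)=\Re\alpha=\Re\varphi_v(\alpha v+\beta w)$ for all $\alpha,\beta\in\mathbb{T}$, completing the proof.
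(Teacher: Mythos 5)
Your proof is correct and follows essentially the same route as the paper's: both parts rest on Proposition~\ref{p linearity on the hermitian part} together with the Hilbert-space uniqueness of the norm-one functional supporting the unit ball at the unit vector $v-w\in\mathcal{H}_e$ (for the hermitian part) and at $1\in\mathbb{C}$ (for the disc $\alpha\mapsto\alpha v+\beta(e-v)$). The only cosmetic differences are that the paper invokes Corollary~\ref{c closed faces associated with tripotents down} rather than Corollary~\ref{c affine on convex subsets of the sphere in reflexive} to get affineness of $\alpha\mapsto\psi\Delta(\alpha v+\beta(e-v))$, and expresses your computation forcing $C(\beta)=0$ as the observation that a norm-one real-linear functional on the real Hilbert space $\mathbb{C}$ attaining the value $1$ at $1$ must equal $\Re$.
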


\begin{proof} Set $v_2=e-v$. Then $v_2$ is a minimal tripotent orthogonal to $v$.
 Proposition \ref{p linearity on the hermitian part} assures that $\psi\Delta$ admits a real linear extension, which we will denote by $\varphi$, on $C_2(e)_{sa}= \mathbb{R} e \oplus^{1}\mathcal{H}_e$ (cf. Proposition \ref{p hermitian part of the Peirce-2 subspace of a rank-2 trip in a spin}). In this case $\varphi$ is a norm-one functional in the dual  of $C_2(e)_{sa}$. By the assumption we get that $\varphi(v-v_2) = \psi\Delta(v-v_2) = 1=\varphi_v(v-v_2)$ as $v-v_2\in F_v^C$. Since $v-v_2\in S(\mathcal{H}_e)$ and $\mathcal{H}_e$ is a real Hilbert space, we deduce that
$\varphi(x)=\Re\varphi(x)=\langle x|v-v_2\rangle$ for $x\in\mathcal{H}_e$. Since, moreover, $\varphi (e) = \psi\Delta(e) = 1=\varphi_v(e)$, we infer that $\varphi=\Re\varphi_v$ on  $C_2(e)_{sa}$.\smallskip

For the final assertion fix $\beta\in \mathbb{T}$ and consider the mapping
$$\theta:\alpha\mapsto \psi \Delta (\alpha v + \beta v_2 ),\qquad \alpha\in\mathcal{B}_{\mathbb{C}}.$$
By Corollary \ref{c closed faces associated with tripotents down} we know that this mapping is affine on $\mathcal{B}_{\mathbb{C}}$. Further, $v\pm\beta v_2\in F_v^C$, hence $\psi\Delta(v\pm\beta v_2)=1$. Since $v-\beta v_2$ is a tripotent, Corollary~\ref{c Tingley antipodal thm for tripotents} shows that
$\psi\Delta(-v+\beta v_2)=-1$. It follows that $\theta(0)=\psi\Delta(\beta v_2)=0$, so $\theta$ is (a restriction of) a real linear mapping. Moreover, clearly $\|\theta\|=1$. Taking into account that $\mathbb{C}$ is a real Hilbert space, necessarily
$$\psi\Delta(\alpha v + \beta v_2 )=\theta(\alpha) = \Re \alpha = \Re \varphi_v (\alpha v + \beta v_2 ).$$
\end{proof}

\section{Extending automorphisms and the final step}\label{sec: inner automorphisms}

In this section we shall complete the proof of our main result using Lemma~\ref{l 2.1 FangWang}. The role of the family $(\varphi_i)$ from this lemma will be played by extreme points of the dual unit ball, hence by the functionals $\Re\varphi,$ where $\varphi\in S(E^*)$ is a pure atom (note that
$E$ is considered as a real space). To verify the assumptions of Lemma~\ref{l 2.1 FangWang} we will use the characterization from Corollary~\ref{c additivity on mutually orthogonal minimal tripotents}$(d)$, namely the condition $(d.3)$. This will be done using Proposition~\ref{p linearity on the hermitian part} with the help of some results on automorphisms exchanging minimal tripotents. Let us start by some results on automorphisms and their extensions.\smallskip

Let $a$ be an element in a JB$^*$-triple $E$. It follows from the axioms in the definition of JB$^*$-triples that the operator $L(a,a)$ is hermitian with non-negative spectrum, and hence $e^{i t L(a,a)}$ is a surjective (isometric) automorphism on $E$ for every $t\in \mathbb{R}$. Each automorphism of the form $e^{i t L(a,a)}$ is called an \emph{inner automorphism} on $E$. 
In the case of finite dimensional JB$^*$-triples, inner automorphisms were deeply studied by O. Loos in \cite{Loos2} (see also \cite{Satake80}). 
\smallskip

Let $e$ be a tripotent in $E$. Since $L(e,e) = P_2(e) + \frac12 P_1(e)$, it can be easily checked that \begin{equation}\label{eq inner auto tripotent} e^{i t L(a,a)} = \sum_{k=0}^{\infty} \frac{i^n t^n}{n!} L(e,e)^n = e^{i t} P_2(e) + e^{i \frac{t}{2}} P_1 (e) + P_0(e),
 \end{equation} which coincides with the automorphism $S_{\lambda}$ with $\lambda=e^{i\frac{t}{2}}$ in \cite[Lemma 1.1]{FriRu85}.\smallskip

Suppose $B$ is a JB$^*$-subtriple of a JB$^*$-triple $E$. Clearly, every inner automorphism of the form $e^{i t L(a,a)}$ (where $a\in B$) admits an obvious extension to an inner automorphism of $E$. This can be applied to prove the following two lemmata.

\begin{lemma}\label{l extension by inner automorphism on M2} Let $E$ be a JB$^*$-triple. Suppose $M_2(\mathbb{C})$ is a JB$^*$-subtriple of $E$ and $\Phi: M_2(\mathbb{C})\to M_2(\mathbb{C})$ is the JB$^*$-triple automorphism defined by $$\Phi(x) = \left(
                        \begin{array}{cc}
                          \alpha_1 & 0 \\
                          0 & \alpha_2 \\
                        \end{array}
                      \right) x \left(
                        \begin{array}{cc}
                          \beta_1 & 0 \\
                          0 & \beta_2 \\
                        \end{array}
                      \right),$$ where $\alpha_1,\alpha_2,\beta_1,\beta_2$ are fixed elements in $\mathbb{T}$. Then there  exists a JB$^*$-triple automorphism $\widetilde{\Phi}: E\to E$ whose restriction to $M_2(\mathbb{C})$ is $\Phi$.
\end{lemma}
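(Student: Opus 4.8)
The plan is to realize $\Phi$ as a finite composition of inner automorphisms of $M_2(\mathbb{C})$ of the form $e^{itL(a,a)}$ with $a\in M_2(\mathbb{C})$, and then to extend each factor to $E$ by means of the observation recorded just before the statement: since $M_2(\mathbb{C})$ is a subtriple of $E$, each operator $e^{itL(a,a)}$ with $a\in M_2(\mathbb{C})$ is a genuine inner automorphism of $E$, it leaves $M_2(\mathbb{C})$ invariant, and it restricts there to the corresponding inner automorphism of $M_2(\mathbb{C})$. As a composition of extensions extends the composition, the composite $\widetilde{\Phi}$ of the extended factors will automatically satisfy $\widetilde{\Phi}|_{M_2(\mathbb{C})}=\Phi$. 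Thus the whole content of the lemma is the \emph{decomposition} of $\Phi$ into inner automorphisms, and the extension step is then free.

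The tripotents I would use are the matrix units $e_{11},e_{12},e_{22}$ of $M_2(\mathbb{C})$. Writing the Peirce decomposition of $M_2(\mathbb{C})$ relative to each of them and applying the formula \eqref{eq inner auto tripotent}, a direct computation on matrix entries shows that $e^{itL(e_{11},e_{11})}$ multiplies the $(1,1)$-entry by $e^{it}$ and both off-diagonal entries by $e^{it/2}$, leaving the $(2,2)$-entry fixed; that $e^{isL(e_{22},e_{22})}$ does the symmetric thing at the $(2,2)$-entry; and that $e^{irL(e_{12},e_{12})}$ multiplies the $(1,2)$-entry by $e^{ir}$ and both diagonal entries by $e^{ir/2}$, leaving the $(2,1)$-entry fixed. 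On the other hand, writing $x=(x_{ij})$, the given map acts entrywise by $\Phi(x)_{ij}=\alpha_i\beta_j\,x_{ij}$, so $\Phi$ is the entrywise multiplier carrying the four prescribed phases $\alpha_1\beta_1,\alpha_1\beta_2,\alpha_2\beta_1,\alpha_2\beta_2$. Note that all three operators $L(e_{11},e_{11}),L(e_{22},e_{22}),L(e_{12},e_{12})$ are simultaneously diagonalized by the matrix-unit basis, hence the three inner automorphisms pairwise commute and their order in the composition is irrelevant.

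It then remains to choose $t,s,r\in\mathbb{R}$ so that the entrywise multipliers of $e^{irL(e_{12},e_{12})}\circ e^{isL(e_{22},e_{22})}\circ e^{itL(e_{11},e_{11})}$ match those of $\Phi$. Matching the four entries gives, on the level of arguments, the linear system $t+\tfrac r2=\arg(\alpha_1\beta_1)$, $s+\tfrac r2=\arg(\alpha_2\beta_2)$, $\tfrac t2+\tfrac s2+r=\arg(\alpha_1\beta_2)$, $\tfrac t2+\tfrac s2=\arg(\alpha_2\beta_1)$ (all modulo $2\pi$). This system of four equations in the three unknowns $t,s,r$ is consistent because both the target multipliers and the composite's multipliers satisfy the relation $\mu_{11}\mu_{22}=\mu_{12}\mu_{21}$ (for the target this is $\alpha_1\beta_1\cdot\alpha_2\beta_2=\alpha_1\beta_2\cdot\alpha_2\beta_1$, and for the composite both sides equal $e^{i(t+s+r)}$); eliminating $r$ from the third equation one reads off an explicit solution, the fourth equation being then automatically verified. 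Defining $\widetilde{\Phi}$ as the composition of the three extended inner automorphisms of $E$ completes the argument. There is no serious obstacle here: the only point requiring attention is having enough inner automorphisms available, which is exactly why the off-diagonal tripotent $e_{12}$ must be included alongside $e_{11},e_{22}$, namely to decouple the $(1,2)$- and $(2,1)$-multipliers that the diagonal generators alone would be forced to make equal.
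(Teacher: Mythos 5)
Your proposal is correct and follows essentially the same route as the paper: both decompose $\Phi$ into inner automorphisms $e^{itL(a,a)}$ generated by tripotents inside $M_2(\mathbb{C})$ (the paper factors out the scalar $\alpha_1\beta_1$ and uses the tripotents $e_{22}, e_{12}, e_{21}$, while you use $e_{11}, e_{22}, e_{12}$) and then invoke the observation that inner automorphisms coming from the subtriple extend to $E$ for free. The one point to watch is that your four phase equations involve half-angles, so the fourth equation is ``automatically verified'' only up to a common sign $(-1)^k$ on both off-diagonal multipliers; this is harmless, since replacing $r$ by $r+2\pi$ and $t,s$ by $t-\pi, s-\pi$ flips both off-diagonal multipliers simultaneously without affecting the diagonal ones.
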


\begin{proof}
It is clear that $\Phi$ is indeed an automorphism of $M_2(\mathbb{C})$. Further, it is clear that
 $$\Phi(x) =  \gamma \left(
                        \begin{array}{cc}
                          1 & 0 \\
                          0 & \alpha \\
                        \end{array}
                      \right) x \left(
                        \begin{array}{cc}
                          1 & 0 \\
                          0 & \beta \\
                        \end{array}
                      \right) \ (x\in B)$$ where $\gamma=\alpha_1\beta_1$, $\alpha = \alpha_2\alpha_1^{-1}$ and $\beta = \beta_2 \beta_1^{-1}$ are complex units. It is enough to show that the automorphism $\Phi_0=\gamma^{-1}\Phi$ can be extended to $E$.

To this end pick $t_0,t_1\in \mathbb{R}$ such that $e^{i t_0} = \alpha \beta$ and $e^{2i t_1} = \frac{\alpha}{\beta}$. We consider the tripotents
$$e_2= \left(
 \begin{array}{cc}
    0 & 0 \\
     0&1 \\
      \end{array}  \right),\ w_1= \left(
 \begin{array}{cc}
    0 & 1 \\
     0&0  \\
      \end{array}  \right), \mbox{ and }w_2= \left(
 \begin{array}{cc}
    0 & 0 \\
    1 &0 \\
      \end{array}  \right).$$
       We deduce from \eqref{eq inner auto tripotent} that $$ e^{i t_0 L(e_2,e_2)} (x) = \left(
                        \begin{array}{cc}
                          1 & 0 \\
                          0 & e^{i \frac{t_0}{2}} \\
                        \end{array}
                      \right) x \left(
                        \begin{array}{cc}
                          1 & 0 \\
                          0 & e^{i \frac{t_0}{2}} \\
                        \end{array}
                      \right),$$ $$  e^{- i t_1 L(w_1,w_1)} (x) = \left(
                        \begin{array}{cc}
                          0 & e^{- i \frac{t_1}{2}} \\
                          1 & 0 \\
                        \end{array}
                      \right) x \left(
                        \begin{array}{cc}
                          0 & e^{- i \frac{t_1}{2}} \\
                          1 & 0 \\
                        \end{array}
                      \right),$$ and $$  e^{ i t_1 L(w_2,w_2)} (x) = \left(
                        \begin{array}{cc}
                          0 & 1 \\
                           e^{ i \frac{t_1}{2}} & 0 \\
                        \end{array}
                      \right) x \left(
                        \begin{array}{cc}
                          0 & 1  \\
                           e^{i \frac{t_1}{2}} & 0 \\
                        \end{array}
                      \right)$$ for all $x\in B= M_2(\mathbb{C})$.
Since
 $$\Phi_0 = e^{- i t_1 L(w_1,w_1)}\circ e^{ i t_1 L(w_2,w_2)} \circ e^{i t_0 L(e_2,e_2)},$$  we see that $\Phi_0$ is the composition of three inner automorphisms on $M_2(\mathbb{C})$  and hence it can be extended to a JB$^*$-triple automorphism, $\widetilde{\Phi}$, on $E$.
 This completes the proof.
\end{proof}

Similar arguments to those given above are also valid to prove our next lemma. As before, $S_2(\mathbb{C})$ will stand for the Cartan factor of all complex symmetric matrices with complex entries (equivalently, a three dimensional spin factor).

\begin{lemma}\label{l extension by inner automorphism on S2} Let $E$ be a JB$^*$-triple. Suppose $S_2(\mathbb{C})$ is a JB$^*$-subtriple of $E$ and $\Phi: S_2(\mathbb{C})\to S_2(\mathbb{C})$ is the JB$^*$-triple automorphism defined by $$\Phi(x) = \left(
                        \begin{array}{cc}
                          \alpha_1 & 0 \\
                          0 & \alpha_2 \\
                        \end{array}
                      \right) x \left(
                        \begin{array}{cc}
                          \beta_1 & 0 \\
                          0 & \beta_2 \\
                        \end{array}
                      \right),$$ where $\alpha_1,\alpha_2,\beta_1,\beta_2$ are fixed elements in $\mathbb{T}$ with $\alpha_1\beta_2 = \alpha_2 \beta_1$. Then there exists a JB$^*$-triple automorphism $\widetilde{\Phi}: E\to E$ whose restriction to $S_2(\mathbb{C})$ is $\Phi$.
\end{lemma}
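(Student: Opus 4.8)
The plan is to follow the strategy of the proof of Lemma~\ref{l extension by inner automorphism on M2}: realize $\Phi$, up to an overall complex unit, as the restriction of an inner automorphism $e^{it_0 L(a,a)}$ whose generating element $a$ lies \emph{inside} $S_2(\mathbb{C})$, so that the obvious extension to $E$ restricts correctly. The first move is to exploit the hypothesis $\alpha_1\beta_2=\alpha_2\beta_1$. Rewriting it as $\alpha_2\alpha_1^{-1}=\beta_2\beta_1^{-1}$ and setting $\gamma=\alpha_1\beta_1$ together with $\alpha=\alpha_2\alpha_1^{-1}=\beta_2\beta_1^{-1}\in\mathbb{T}$, the automorphism takes the form
$$\Phi(x)=\gamma\,\operatorname{diag}(1,\alpha)\,x\,\operatorname{diag}(1,\alpha).$$
It is precisely this coincidence $\alpha=\beta$, forced by the hypothesis, that makes $\Phi$ preserve symmetry and hence map $S_2(\mathbb{C})$ into itself. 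Exactly as in Lemma~\ref{l extension by inner automorphism on M2}, since $x\mapsto\gamma x$ is a triple automorphism of the whole of $E$ (because $|\gamma|=1$), it is enough to extend $\Phi_0=\gamma^{-1}\Phi$.

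The key observation — and the point where the present argument is genuinely simpler than the $M_2(\mathbb{C})$ case — is that here a single inner automorphism suffices. In the proof of Lemma~\ref{l extension by inner automorphism on M2} one needed the generators $w_1,w_2$, which are not symmetric and therefore do not belong to $S_2(\mathbb{C})$; but the constraint collapses the two diagonal factors to one and the same $\alpha$, so these off-diagonal generators become unnecessary. Concretely, I would take the minimal tripotent $e_2=\operatorname{diag}(0,1)\in S_2(\mathbb{C})$ and choose $t_0\in\mathbb{R}$ with $e^{it_0/2}=\alpha$. By the matrix description of $e^{it_0L(e_2,e_2)}$ already computed in the proof of Lemma~\ref{l extension by inner automorphism on M2} (itself a consequence of \eqref{eq inner auto tripotent}), one obtains
$$e^{it_0L(e_2,e_2)}(x)=\operatorname{diag}(1,e^{it_0/2})\,x\,\operatorname{diag}(1,e^{it_0/2})=\operatorname{diag}(1,\alpha)\,x\,\operatorname{diag}(1,\alpha)=\Phi_0(x)$$
for every $x\in S_2(\mathbb{C})$; this is a direct restriction of the computation performed for $M_2(\mathbb{C})$, since $S_2(\mathbb{C})$ is a subtriple of $M_2(\mathbb{C})$ and $L(e_2,e_2)$ is computed from the same triple product.

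Finally, since $e_2\in S_2(\mathbb{C})\subseteq E$, the inner automorphism $e^{it_0L(e_2,e_2)}$ is defined on all of $E$ and restricts to $\Phi_0$ on $S_2(\mathbb{C})$; composing it with multiplication by $\gamma$ yields the desired extension $\widetilde{\Phi}=\gamma\,e^{it_0L(e_2,e_2)}$ of $\Phi$. I expect the only delicate point to be the bookkeeping at the very start: recognizing that the hypothesis $\alpha_1\beta_2=\alpha_2\beta_1$ is exactly what forces $\alpha=\beta$, which is simultaneously the reason $\Phi$ is well defined as an automorphism of $S_2(\mathbb{C})$ and the reason the three-fold composition of the previous lemma degenerates to a single inner automorphism. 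Everything after that is a routine restriction of the identities already established for $M_2(\mathbb{C})$.
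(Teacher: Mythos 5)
Your proposal is correct and follows essentially the same route as the paper: both reduce $\Phi$ via the hypothesis $\alpha_1\beta_2=\alpha_2\beta_1$ to $\gamma\,\operatorname{diag}(1,\alpha)\,x\,\operatorname{diag}(1,\alpha)$ and identify the non-scalar part with the single inner automorphism $e^{it_0L(e_2,e_2)}$ generated by $e_2=\operatorname{diag}(0,1)\in S_2(\mathbb{C})$, which then extends to $E$. Your explicit remark that the unimodular factor $\gamma$ is itself a triple automorphism of all of $E$ is a harmless (and correct) elaboration of what the paper leaves implicit.
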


\begin{proof} Under the hypothesis of the lemma, we observe that $$\Phi(x) = \left(
                        \begin{array}{cc}
                          \alpha_1 & 0 \\
                          0 & \alpha_2 \\
                        \end{array}
                      \right) x \left(
                        \begin{array}{cc}
                          \beta_1 & 0 \\
                          0 & \beta_2 \\
                        \end{array}
                      \right)=\alpha_1\beta_1 \left(
                        \begin{array}{cc}
                          1 & 0 \\
                          0 & \alpha_2 \overline{\alpha_1} \\
                        \end{array}
                      \right) x \left(
                        \begin{array}{cc}
                          1 & 0 \\
                          0 & \beta_2 \overline{\beta_1} \\
                        \end{array}
                      \right) $$ $$=\alpha_1\beta_1 \left(
                        \begin{array}{cc}
                          1 & 0 \\
                          0 & \alpha \\
                        \end{array}
                      \right) x \left(
                        \begin{array}{cc}
                          1 & 0 \\
                          0 & \alpha \\
                        \end{array}
                      \right) = \alpha_1\beta_1 e^{i t_0 L(e_2,e_2)} (x),$$ where $\alpha = \alpha_2 \overline{\alpha_1} = \beta_2 \overline{\beta_1},$ $e_2= \left(
 \begin{array}{cc}
    0 & 0 \\
     0&1 \\
      \end{array}  \right)$, and $t_0\in \mathbb{R}$ with $e^{i {t_0}} = \beta^2$. This gives the desired conclusion because $e^{i t_0 L(e_2,e_2)}$ is an inner automorphism on $S_2(\mathbb{C})$.
\end{proof}

The previous two lemmata will be used together with Lemma~\ref{L:quatra} and the following one.

\begin{lemma}\label{L:isomorphisms}
Let $B=M_2(\mathbb{C})$ or $B=S_2(\mathbb{C})$ and let $u\in B$ be a unitary element. Set $v=\begin{pmatrix}1&0\\0&0\end{pmatrix}$. Then there are complex numbers $c_1,c_2,d_1,d_2\in \mathbb{T}$
such that the following assertions are valid.
\begin{enumerate}[$(i)$]
	\item The mappings defined by the formula
	$$\Psi_1(x)=\left(\begin{array}{cc}
    {c_1} & 0 \\
    0 & {c_2}
  \end{array}\right) x
  \left(\begin{array}{cc}
    {d_1} & 0 \\
    0 & {d_2}
  \end{array}\right)\mbox{ and }\Psi_2(x)=\left(\begin{array}{cc}
    1 & 0 \\
    0 & {c_2d_1}
  \end{array}\right) x
  \left(\begin{array}{cc}
    1 & 0 \\
    0 & {c_1d_2}
  \end{array}\right)$$
  are automorphisms of $B$;
  \item $\Psi_1^{-1}(u)$ is a hermitian matrix with real entries and zero trace;
  \item $\Psi_1$ and $\Psi_2$ commute with the Peirce projections of $v$;
	\item $\Psi_1=\Psi_2$ on $B_1(v)$;
	\item $\Psi_2(v)=v$.
	\end{enumerate}
\end{lemma}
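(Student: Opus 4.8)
The plan is to separate the five conditions into the purely structural ones—(i), (iii), (iv), (v)—which hold for essentially any admissible choice of the four scalars, and condition (ii), where the unitarity of $u$ is genuinely used and where all the real work lies.

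First I would observe that any map of the form $x\mapsto\mathrm{diag}(\gamma_1,\gamma_2)\,x\,\mathrm{diag}(\delta_1,\delta_2)$ sends each matrix unit $e_{ij}$ to the scalar multiple $\gamma_i\delta_j\,e_{ij}$. Since $v=\begin{pmatrix}1&0\\0&0\end{pmatrix}$ has Peirce-$2$, Peirce-$1$ and Peirce-$0$ subspaces spanned respectively by $e_{11}$, by the off-diagonal units ($e_{12},e_{21}$ when $B=M_2(\mathbb{C})$ and $e_{12}+e_{21}$ when $B=S_2(\mathbb{C})$), and by $e_{22}$, every such map leaves each Peirce subspace invariant; this gives (iii) at once. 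A direct comparison of scalars shows that both $\Psi_1$ and $\Psi_2$ multiply $e_{12}$ by $c_1d_2$ and $e_{21}$ by $c_2d_1$, so they agree on $B_1(v)$, giving (iv), while $\Psi_2(e_{11})=e_{11}=v$ by construction, giving (v). For (i), any diagonal conjugation is a triple automorphism of $M_2(\mathbb{C})$ (as in Lemma~\ref{l extension by inner automorphism on M2}); on $S_2(\mathbb{C})$ it must in addition preserve symmetry, which forces exactly the relation $c_1d_2=c_2d_1$, and under this relation it is an automorphism of $S_2(\mathbb{C})$ (as in Lemma~\ref{l extension by inner automorphism on S2}). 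Note the same relation makes $\Psi_2$ an automorphism. Thus (i), (iii), (iv), (v) hold for every admissible choice, and I am free to spend the remaining phase freedom on (ii).

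For (ii), writing $C=\mathrm{diag}(\overline{c_1},\overline{c_2})$ and $D=\mathrm{diag}(\overline{d_1},\overline{d_2})$ one has $\Psi_1^{-1}(u)=C\,u\,D$, again a unitary matrix. I would use the elementary fact that a real $2\times2$ orthogonal matrix is symmetric and traceless precisely when its determinant is $-1$, these being exactly the reflections $\begin{pmatrix}a&b\\b&-a\end{pmatrix}$ with $a^2+b^2=1$. Hence (ii) amounts to choosing the diagonal-unitary gauges $C,D$ so that $CuD$ is real and equals such a reflection. Writing the entries $u_{ij}=|u_{ij}|e^{i\phi_{ij}}$, the requirement that every entry of $CuD$ be real is a linear system modulo $\pi$ in the arguments of $\gamma_i,\delta_j$; its only consistency condition is $\phi_{11}-\phi_{12}-\phi_{21}+\phi_{22}\equiv 0\pmod\pi$, which I would verify from the row-orthogonality relation $u_{11}\overline{u_{21}}+u_{12}\overline{u_{22}}=0$ of the unitary $u$ together with $|u_{11}|=|u_{22}|$, $|u_{12}|=|u_{21}|$. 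In the $M_2(\mathbb{C})$ case, once $CuD$ is real (hence orthogonal) I simply replace $\gamma_1$ by $-\gamma_1$ if needed to force $\det(CuD)=-1$, landing on a reflection. The degenerate configurations where $u$ is diagonal or anti-diagonal I would treat by hand, prescribing the products $\gamma_1\delta_1,\gamma_2\delta_2$ (respectively the two off-diagonal phases) to reach $\mathrm{diag}(1,-1)$ (respectively $\begin{pmatrix}0&1\\1&0\end{pmatrix}$).

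The main obstacle is condition (ii) in the $S_2(\mathbb{C})$ case, where the gauge freedom is tied by $\gamma_1\delta_2=\gamma_2\delta_1$, so the single-row sign flip used for $M_2(\mathbb{C})$ is no longer available. Here the constraint makes $CuD$ automatically symmetric, so solving the real-ness system produces a real symmetric orthogonal matrix, which is either a reflection (traceless, as wanted) or $\pm I$. The point I expect to require care is twofold: that the constrained real-ness system is still solvable—its compatibility reduces to $2\beta\equiv\alpha+\delta\pmod\pi$ for the phases $\alpha=\arg u_{11}$, $\beta=\arg u_{12}$, $\delta=\arg u_{22}$, which I would read off from the unitarity relation $\alpha+\delta\equiv 2\beta+\pi\pmod{2\pi}$ of the symmetric unitary $u$—and that the outcome is a reflection rather than $\pm I$. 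The latter is automatic whenever $u$ has a nonzero off-diagonal entry, since then the diagonal entries of $CuD$ have common modulus strictly less than $1$ and cannot be $\pm 1$; in the remaining diagonal case I secure tracelessness directly by prescribing $\gamma_1\delta_1$ and $\gamma_2\delta_2$ to the targets $\overline{u_{11}}$ and $-\overline{u_{22}}$, which is possible for any targets under the constraint $\gamma_1\delta_2=\gamma_2\delta_1$.
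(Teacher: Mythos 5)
Your proposal is correct and follows essentially the same route as the paper: the structural assertions $(i)$, $(iii)$--$(v)$ are immediate for any admissible scalars, and the whole content of $(ii)$ is the decomposition of the unitary $u$ as $\operatorname{diag}(\overline{c_1},\overline{c_2})\left(\begin{smallmatrix}t&\sqrt{1-t^2}\\ \sqrt{1-t^2}&-t\end{smallmatrix}\right)\operatorname{diag}(\overline{d_1},\overline{d_2})$, with the symmetric case handled by additionally imposing $c_1d_2=c_2d_1$ and treating the diagonal (degenerate) case separately. The only difference is that the paper quotes this decomposition from the remarks of Mori and Ozawa preceding their Lemma~23, whereas you prove it directly by phase-gauging and elementary facts about real $2\times2$ orthogonal matrices, which makes the argument self-contained.
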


\begin{proof}
Assume first that $B=M_2(\mathbb{C})$. As observed by Mori and Ozawa in the comments preceding \cite[Lemma 23]{MoriOza2018}, in this case, there exist complex numbers $c_1,c_2,d_1,d_2\in \mathbb{T}$ and $t\in [0,1]$ such that
\begin{equation}
\label{eq:shift in M2}
u = \left(\begin{array}{cc}
    \overline{c_1} & 0 \\
    0 & \overline{c_2}
  \end{array}\right) \left(                                                     \begin{array}{cc}
                                                                          t & \sqrt{1-t^2} \\
                                                                          \sqrt{1-t^2} & -t
                                                                        \end{array}
                                                                     \right)
  \left(\begin{array}{cc}
    \overline{d_1} & 0 \\
    0 & \overline{d_2}
  \end{array}\right).\end{equation}
The numbers chosen in this way obviously have all the properties.

Next assume that $B=S_2(\mathbb{C})$. The remark from \cite{MoriOza2018} may be applied as well,  so we may choose
 $c_1,c_2,d_1,d_2\in \mathbb{T}$ such that \eqref{eq:shift in M2} holds. Since the middle matrix on the right-hand side is symmetric, the only
 additional thing to be assured is that the mappings $\Psi_1$ and $\Psi_2$ preserve symmetry of matrices, i.e. that $c_1d_2=c_2d_1$. If $t\ne 0$, it is satisfied automatically due to the symmetry of $u$. If $t=0$,
 it is not satisfied automatically, but $u$ is then a diagonal matrix and hence we may easily achieve even $c_1=d_1$ and $c_2=d_2$.
 This completes the proof.
\end{proof}

The next proposition is the last step to the proof of our main result.

\begin{proposition}\label{p behavior on pure atoms rank2 Cartan factor} Let $\Delta: S(C) \to S(Y)$ be a surjective isometry, where $Y$ is a real Banach space and $C$ is a rank-2 Cartan factor. Let $\varphi_v$ be a pure atom in $C^*$, where $v$ is a minimal tripotent in $C$, and let $\psi\in \hbox{supp}_{\Delta}(F_v^{C})$. Then $\psi \Delta(x) = \Re\varphi_v (x),$ for all $x\in S(C)$.
\end{proposition}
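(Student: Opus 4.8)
The plan is to verify the hypotheses of Corollary~\ref{c additivity on mutually orthogonal minimal tripotents}$(d)$ in the form of condition $(d.3)$: it suffices to prove that $\psi\Delta(v')=\Re\varphi_v(v')$ for every minimal tripotent $v'\in C$, since $(d.3)\Rightarrow(d.1)$ then yields the statement on all of $S(C)$. Throughout I will use that a triple automorphism $\widetilde\Phi$ of $C$ is a complex-linear surjective isometry with $\widetilde\Phi(F_{w}^{C})=F_{\widetilde\Phi(w)}^{C}$ and $\varphi_w\circ\widetilde\Phi=\varphi_{\widetilde\Phi^{-1}(w)}$ for a minimal tripotent $w$; consequently $\Delta\circ\widetilde\Phi$ is again a surjective isometry and $\psi\in\operatorname{supp}_{\Delta\circ\widetilde\Phi}(F_{\widetilde\Phi^{-1}(v)}^{C})$. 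This bookkeeping lets me replace $\Delta$ by $\Delta\circ\widetilde\Phi$ and the reference tripotent $v$ by $\widetilde\Phi^{-1}(v)$ at will, and then apply Lemma~\ref{l 23 MO in triple version for all rank2 Cartan factors} to the transported data.

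Fix a minimal tripotent $v'$. By Lemma~\ref{L:quatra} I fix an isometric triple monomorphism $\iota\colon B\to C$ with $B=M_2(\mathbb{C})$ or $B=S_2(\mathbb{C})$, $v=\iota(e_{11})$ and $v'\in\iota(B)$, so that everything takes place inside $\iota(B)$; put $e=\iota(1)$, a rank-$2$ tripotent with $v\le e$. Applied with this $e$, Lemma~\ref{l 23 MO in triple version for all rank2 Cartan factors} already gives $\psi\Delta=\Re\varphi_v$ on $S(C_2(e)_{sa})$ and on $\mathbb{T}v\oplus\mathbb{T}(e-v)$. Since $C$ is reflexive, Corollary~\ref{c affine on convex subsets of the sphere in reflexive} shows $\Delta$ is affine on every convex subset of $S(C)$; choosing a minimal tripotent $w'\in\iota(B)$ orthogonal to $v'$ with $v'+w'$ a unitary of $\iota(B)$, writing $v'=\tfrac12(u_{+}+u_{-})$ with $u_{\pm}=v'\pm w'$, and noting that the segment $[\iota(u_{+}),\iota(u_{-})]$ lies on $S(C)$, I reduce the computation of $\psi\Delta(v')$ to that of $\psi\Delta(\iota(u))$ for unitaries $u$ of $B$.

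To treat a unitary $u\in B$ I invoke Lemma~\ref{L:isomorphisms}: there are $c_1,c_2,d_1,d_2\in\mathbb{T}$ and diagonal automorphisms $\Psi_1,\Psi_2$ of $B$, extending to triple automorphisms $\widetilde\Psi_1,\widetilde\Psi_2$ of $C$ by Lemmas~\ref{l extension by inner automorphism on M2} and~\ref{l extension by inner automorphism on S2}, such that $h:=\Psi_1^{-1}(u)=\begin{pmatrix}t&s\\ s&-t\end{pmatrix}$ is a real symmetric zero-trace (hence self-adjoint) unitary, while $\Psi_2$ fixes $v$, agrees with $\Psi_1$ on $B_1(v)$ and commutes with the Peirce projections of $v$. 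Composing with $\widetilde\Psi_2$ (which fixes both $v$ and $\varphi_v$) reduces $\psi\Delta(\iota(u))$ to $\psi(\Delta\circ\widetilde\Psi_2)(\iota(g))$ with $g=\Psi_2^{-1}(u)$; using the abelianity of the diagonal automorphisms one computes $g=\begin{pmatrix}\lambda t& s\\ s& -\bar\lambda t\end{pmatrix}$, where $\lambda=c_1d_1$. Splitting $\iota(g)=\iota(h_1)+t\operatorname{Im}(\lambda)\,(ie)$, where $h_1=\tfrac12(g+g^{*})\in C_2(e)_{sa}$ and $ie\in\mathbb{T}v\oplus\mathbb{T}(e-v)$, and feeding the two summands into the two conclusions of Lemma~\ref{l 23 MO in triple version for all rank2 Cartan factors} (which give $\Re\varphi_v(\iota(h_1))=t\operatorname{Re}(\lambda)$ and $\Re\varphi_v(ie)=0$), I arrive at $\psi\Delta(\iota(u))=t\operatorname{Re}(\lambda)=\operatorname{Re}(u_{11})=\Re\varphi_v(\iota(u))$.

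The step carrying all the weight is the last one: the additivity $\psi F\big(\iota(h_1)+t\operatorname{Im}(\lambda)\,ie\big)=\psi F(\iota(h_1))+t\operatorname{Im}(\lambda)\,\psi F(ie)$ for the positively homogeneous extension $F$ of $\Delta$. This is precisely the point where the self-adjoint picture of Proposition~\ref{p linearity on the hermitian part} must be glued to the behaviour on diagonal unitaries, and where the phase $\lambda$ produced by the diagonalizing automorphism $\Psi_1$ has to be reconciled with the demand of Lemma~\ref{l 23 MO in triple version for all rank2 Cartan factors} that the auxiliary rank-$2$ tripotent dominate the reference tripotent. The obstacle is concrete: $\iota(h)$ lies in $C_2(\tilde e)_{sa}$ only for an unrotated unit $\tilde e$, whereas $\Psi_1$ forces the reference to the scalar multiple $\bar\lambda v$, and no phase-multiple of $e$ repairs this unless $\lambda=\pm1$. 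The role of the companion automorphism $\Psi_2$ and of its agreement with $\Psi_1$ on $B_1(v)$ is exactly to absorb this discrepancy, so that the real part $\operatorname{Re}(\lambda t)=\Re\varphi_v(\iota(u))$ survives while the imaginary rotation is annihilated by the value $\Re\varphi_v(ie)=0$. Establishing this additivity — equivalently, the real-linearity of $F$ on the whole spin factor $C_2(e)$ rather than merely on its self-adjoint part — is the crux; once it is in place, Corollary~\ref{c additivity on mutually orthogonal minimal tripotents}$(d.3)\Rightarrow(d.1)$ closes the proof.
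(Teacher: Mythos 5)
Your overall architecture matches the paper's: reduce via Corollary~\ref{c additivity on mutually orthogonal minimal tripotents}$(d.3)$ to minimal tripotents, pass to rank-$2$ tripotents (unitaries) of the subtriple $\iota(B)$ furnished by Lemma~\ref{L:quatra}, and use Lemma~\ref{L:isomorphisms} together with the extension Lemmas~\ref{l extension by inner automorphism on M2} and~\ref{l extension by inner automorphism on S2} and Lemma~\ref{l 23 MO in triple version for all rank2 Cartan factors}. But your argument has a genuine gap at exactly the point you flag: the additivity $\psi F\bigl(\iota(h_1)+t\operatorname{Im}(\lambda)\, ie\bigr)=\psi F(\iota(h_1))+t\operatorname{Im}(\lambda)\,\psi F(ie)$. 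Here $\iota(h_1)\in C_2(e)_{sa}$ while $ie$ is \emph{anti}-self-adjoint in $C_2(e)$ (indeed $(ie)^{*_e}=-ie$), so the sum leaves $C_2(e)_{sa}$ and Proposition~\ref{p linearity on the hermitian part} does not apply; nor does $ie$ together with $\mathcal{H}_e$ sit inside the self-adjoint part of any other unitary tripotent (for $x\in\mathcal{H}_e$ one has $x^{*_{ie}}=-x$). Establishing real-linearity of $\psi F$ on $C_2(e)_{sa}\oplus\mathbb{R}(ie)$ is essentially as hard as the proposition itself, so the proof as written does not close.

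The paper avoids this decomposition altogether by applying Proposition~\ref{p linearity on the hermitian part} to the \emph{composed} isometry $\Delta\Phi_1$ rather than to $\Delta$ or $\Delta\Phi_2$. Since $\Psi_1^{-1}(\iota^{-1}(u))=\begin{pmatrix}t&s\\ s&-t\end{pmatrix}$ is real symmetric with zero trace, it splits as $t\operatorname{diag}(1,-1)+s\begin{pmatrix}0&1\\1&0\end{pmatrix}$ with \emph{both} summands in $B_2(1)_{sa}$, so the linearity of $\psi\Delta\Phi_1$ on $C_2(e)_{sa}$ legitimately separates the two terms. The diagonal term is then handled by noting $\Phi_1\iota(\operatorname{diag}(1,-1))\in\mathbb{T}v\oplus\mathbb{T}(e-v)$ and invoking the second clause of Lemma~\ref{l 23 MO in triple version for all rank2 Cartan factors} for $\Delta$ itself, while the off-diagonal term is transferred to $\Delta\Phi_2$ (using $\Psi_1=\Psi_2$ on $B_1(v)$ and $\Phi_2^{-1}(v)=v$, so $\psi\in\operatorname{supp}_{\Delta\Phi_2}(F_v^C)$) and killed by $\Re\varphi_v=0$ on $B_1(v)$. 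If you rearrange your argument so that the splitting is performed on $\Psi_1^{-1}(u)$ inside the self-adjoint part, using $\Delta\Phi_1$, the phase $\lambda$ never has to be absorbed by an additivity statement you cannot prove.
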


\begin{proof} We observe that Corollary \ref{c additivity on mutually orthogonal minimal tripotents}$(d)$ tells that it suffices to prove that $$\hbox{$\psi \Delta(w) = \varphi_v (w)$ for every minimal tripotent $w\in C$.}$$

We fix an arbitrary minimal tripotent $w\in C$. Let $B$ and $\iota$ be given by Lemma~\ref{L:quatra} applied to $v$ and $w$.
Set $e= \iota(1)$. Another use of Corollary \ref{c additivity on mutually orthogonal minimal tripotents}$(d)$ shows that to prove
the equality $\psi\Delta(w)=\Re\varphi_v(w)$ it is enough to
prove that $$\psi \Delta(u) = \varphi_v (u), \hbox{ for every rank-$2$ tripotent $u\in \iota(B)$.}$$
Let us fix a rank-$2$ tripotent $u\in \iota(B)$. Then $\iota^{-1}(u)$ is a rank-$2$ tripotent in $B$, hence it is a unitary element of $B$.
Let $\Psi_1$ and $\Psi_2$ be the automorphisms of $B$ provided by Lemma~\ref{L:isomorphisms}. For $j=1,2$ let $\Phi_j$ be an automorphims of $C$ extending $\iota\circ\Psi_j\circ\iota^{-1}$. It exists by Lemma~\ref{l extension by inner automorphism on M2} or by Lemma~\ref{l extension by inner automorphism on S2}.

By property $(ii)$ from Lemma~\ref{L:isomorphisms} we have
$$\iota^{-1}(\Phi_1^{-1}(u))=\Psi_1^{-1}(\iota^{-1}(u))=\begin{pmatrix}
	t&s\\s&-t
\end{pmatrix}$$
for some $s,t\in\mathbb{R}$. Hence we have
$$\begin{aligned}
\psi\Delta(u)&=\psi\Delta\Phi_1\left(\iota\begin{pmatrix}
	t&s\\s&-t
\end{pmatrix}\right)\\&=
t\psi\Delta\Phi_1\left(\iota\begin{pmatrix}
	1&0\\0&-1
\end{pmatrix}\right)+s\psi\Delta\Phi_1\left(\iota\begin{pmatrix}
	0&1\\1&0
\end{pmatrix}\right),
\end{aligned}$$
by Proposition~\ref{p linearity on the hermitian part} applied to the surjective isometry $\Delta \Phi_1|_{S(C)}$.\smallskip

We consider next the surjective isometry $\Delta \Phi_2|_{S(C)}$. Since $\Phi_2^{-1}(v)=v$  the respective pure atom is
$$\varphi_v\Phi_2=\varphi_{\Phi_2^{-1}(v)}=\varphi_{v}$$
and the associate face is
$$\Phi_2^{-1}(F_v^C)= F_{\Phi_2^{-1}(v)}^C=F_{v}^C.$$
Since
$$\psi\in \mbox{supp}_{\Delta} (F_v^C)=\mbox{supp}_{\Delta\Phi_2}(\Phi_2^{-1}(F_v^C))=\mbox{supp}_{\Delta\Phi_2}(F_{v}^C),$$
we get
$$\begin{aligned}\psi\Delta\Phi_1\left(\iota\begin{pmatrix}
	0&1\\1&0
\end{pmatrix}\right)&=\psi\Delta\Phi_2\left(\iota\begin{pmatrix}
	0&1\\1&0
\end{pmatrix}\right)=\Re\varphi_v\left(\iota\begin{pmatrix}
	0&1\\1&0
\end{pmatrix}\right)=0\\&=\Re\varphi_v\left(\Phi_1\iota\begin{pmatrix}
	0&1\\1&0
\end{pmatrix}\right).\end{aligned}$$
Indeed, the first equality follows from property $(iv)$ in Lemma~\ref{L:isomorphisms}, the second one from
Lemma \ref{l 23 MO in triple version for all rank2 Cartan factors} and the last two equalities follow from the definition of $\varphi_v$
together with property $(iii)$ from Lemma~\ref{L:isomorphisms}.\smallskip

Another application of property $(iii)$ from Lemma~\ref{L:isomorphisms} gives $$\Phi_1\iota\begin{pmatrix}
	1&0\\0&-1
\end{pmatrix}=\iota\Psi_1\begin{pmatrix}
	1&0\\0&-1
\end{pmatrix}\in\mathbb{T}v\oplus\mathbb{T}(e-v),$$ hence by Lemma \ref{l 23 MO in triple version for all rank2 Cartan factors}
we get
$$\psi\Delta\left(\Phi_1\iota\begin{pmatrix}
	1&0\\0&-1
\end{pmatrix}\right)=\Re\varphi_v\left(\Phi_1\iota\begin{pmatrix}
	1&0\\0&-1
\end{pmatrix}\right).$$
Combining the above we obtain
$$\begin{aligned}
\psi\Delta(u)&=
t\psi\Delta\Phi_1\left(\iota\begin{pmatrix}
	1&0\\0&-1
\end{pmatrix}\right)+s\psi\Delta\Phi_1\left(\iota\begin{pmatrix}
	0&1\\1&0
\end{pmatrix}\right)
\\&=t\Re\varphi_v\left(\Phi_1\iota\begin{pmatrix}
	1&0\\0&-1
\end{pmatrix}\right)+s\Re\varphi_v\left(\Phi_1\iota\begin{pmatrix}
	0&1\\1&0
\end{pmatrix}\right)
\\&=\Re\varphi_v\Phi_1\iota\begin{pmatrix}
	t&s\\s&-t
\end{pmatrix}=\Re\varphi_v(u)
\end{aligned}$$
which completes the proof.
\end{proof}

\medskip\medskip

\textbf{Acknowledgements}
O.F.K.Kalenda was partially supported by the Czech Science Foundation, project no. GA\v{C}R 17-00941S.

A.M. Peralta partially supported by the Spanish Ministry of Science, Innovation and Universities (MICINN) and European Regional Development Fund project no. PGC2018-093332-B-I00 and Junta de Andaluc\'{\i}a grant FQM375. \smallskip

The research of this article was partially done during a visit of A.M. Peralta to Charles University in Prague. He would like to thank for the hospitality during his stay.

\end{document}